\definecolor{labelkey}{rgb}{0,0.08,0.45}
\definecolor{refkey}{rgb}{0,0.6,0.0}
\definecolor{Brown}{rgb}{0.45,0.0,0.05}
\definecolor{lime}{rgb}{0.00,0.8,0.0}
\definecolor{lblue}{rgb}{0.5,0.5,0.99}
\newcommand{\sepp}{\setlength{\itemsep}{-2pt}}
\newcommand{\weakly}{\ensuremath{\:{\rightharpoonup}\:}}
\newcommand{\nnn}{\ensuremath{{n\in{\mathbb N}}}}
\newcommand{\menge}[2]{\big\{{#1}~\big |~{#2}\big\}}
\newcommand{\fenv}[1]%
{\ensuremath{\,\overrightarrow{\operatorname{env}}_{#1}}}
\newcommand{\benv}[1]%
{\ensuremath{\,\overleftarrow{\operatorname{env}}_{#1}}}
\newcommand{\scal}[2]{\left\langle{#1},{#2}  \right\rangle}
\newcommand{\exi}{\ensuremath{\exists\,}}
\newcommand{\zeroun}{\ensuremath{\left]0,1\right[}}
\newcommand{\RR}{\ensuremath{\mathbb R}}
\newcommand{\ball}{\ensuremath{\operatorname{ball}}}
\newcommand{\NN}{\ensuremath{\mathbb N}}
\newcommand{\reli}{\ensuremath{\operatorname{ri}}}
\newcommand{\inte}{\ensuremath{\operatorname{int}}}
\newcommand{\sri}{\ensuremath{\operatorname{sri}}}
\newcommand{\ran}{\ensuremath{\operatorname{ran}}}
\newcommand{\aff}{\ensuremath{\operatorname{aff}}}
\newcommand{\lspan}{\ensuremath{{\operatorname{span}}\,}}
\newcommand{\cran}{\ensuremath{\overline{\operatorname{ran}}\,}}
\newcommand{\Fix}{\ensuremath{\operatorname{Fix}}}
\newcommand{\Id}{\ensuremath{\operatorname{Id}}}
\newtheorem{theorem}{Theorem}[section]
\newtheorem{lemma}[theorem]{Lemma}
\newtheorem{corollary}[theorem]{Corollary}
\newtheorem{proposition}[theorem]{Proposition}
\newtheorem{definition}[theorem]{Definition}
\theoremstyle{plain}{\theorembodyfont{\rmfamily}
}
\theoremstyle{plain}{\theorembodyfont{\rmfamily}
}
\theoremstyle{plain}{\theorembodyfont{\rmfamily}
}
\theoremstyle{plain}{\theorembodyfont{\rmfamily}
\newtheorem{example}[theorem]{Example}}
\newtheorem{fact}[theorem]{Fact}
\theoremstyle{plain}{\theorembodyfont{\rmfamily}
\newtheorem{remark}[theorem]{Remark}}
\begin{document}

\title{{Linear and strong convergence of algorithms\\ 
involving averaged nonexpansive operators}}

\author{
Heinz H.\ Bauschke\thanks{
Mathematics, University
of British Columbia,
Kelowna, B.C.\ V1V~1V7, Canada. E-mail:
\texttt{heinz.bauschke@ubc.ca}.},~
Dominikus Noll\thanks{
Universit{\'e} Paul Sabatier,
Institut de Math{\'e}matiques,
118 route de Narbonne, 31062 Toulouse, France.
E-mail: \texttt{noll@mip.ups-tlse.fr}.}~~~and~
Hung M.\ Phan\thanks{
Mathematics, University
of British Columbia,
Kelowna, B.C.\ V1V~1V7, Canada. E-mail:
\texttt{hung.phan@ubc.ca}.}
}

\date{February 21, 2014}

\maketitle

%
\begin{abstract} \noindent
We introduce regularity notions for averaged nonexpansive operators.
Combined with regularity notions of their fixed point sets,
we obtain linear and strong convergence results for quasicyclic,
cyclic, and random iterations.
New convergence results on the Borwein--Tam
method (BTM) and on the 
cylically anchored Douglas--Rachford algorithm (CADRA) are also presented. 
Finally, we provide a numerical comparison
of BTM, CADRA and the classical method of cyclic projections for
solving convex feasibility problems.
\end{abstract}

{\small
\noindent
{\bfseries 2010 Mathematics Subject Classification:}
{Primary 65K05; Secondary 47H09, 90C25.
}

\noindent {\bfseries Keywords:}
Averaged nonexpansive mapping, 
Borwein--Tam method, 
bounded linear regularity, 
convex feasibility problem, 
convex set,
cyclically anchored Douglas--Rachford algorithm,
Douglas--Rachford algorithm, 
nonexpansive operator, 
projection, 
random method,
transversality. 
}

\section{Overview}

Throughout this paper, $X$ is a real Hilbert space with
inner product $\scal{\cdot}{\cdot}$ and induced norm $\|\cdot\|$.
The convex feasibility problem asks to find a point in the
intersection of convex sets. This is an important problem in
mathematics and engineering; see, e.g.,
\cite{BB}, \cite{BC2011}, \cite{Ceg}, 
\cite{CZ}, \cite{Comb96}, \cite{GK}, \cite{GR}, \cite{Yamada},
and the references therein.

Oftentimes, the convex sets are given as fixed point sets of
projections or (more generally) averaged nonexpansive operators. 
In this case, weak convergence to a solution is guaranteed but
the question arises under which circumstances can we guarantee
strong or even linear convergence.
The situation is quite clear for 
projection algorithms; see, e.g., \cite{BB} and also \cite{KL}. 

\emph{
The aim of this paper is to provide verifiable sufficient
conditions for strong and linear convergence of algorithms based
on iterating convex combinations of averaged nonexpansive
operators.
}

Our results can be nontechnically summarized as follows:
\emph{If each operator is well behaved and the fixed point sets
relate well to each other, then the algorithm converges
strongly or linearly.}

Specifically, we obtain the following main results
on iterations of averaged nonexpansive mappings:

\begin{itemize}
\item
If each operator is boundedly linearly regular and the family
of corresponding fixed point sets is boundedly linearly regular, then
quasicyclic averaged algorithms converge linearly (Theorem~\ref{t:main1}). 
\item
If each operator is boundedly regular and the family of
corresponding fixed point sets is boundedly regular,
then cyclic algorithms converge strongly (Theorem~\ref{t:main3}). 
\item 
If each operator is boundedly regular and the family
of corresponding fixed point sets is innately boundedly regular,
then random sequential algorithms converge strongly
(Theorem~\ref{t:main2}). 
\end{itemize}

We also focus in particular on algorithms featuring 
the Douglas--Rachford splitting operator and obtain new
convergence results on the Borwein--Tam method and
the cyclically anchored Douglas--Rachford algorithm.

The remainder of the paper is organized as follows.
In Sections~\ref{s:blrop} and \ref{s:avnon}, 
we discuss (boundedly) linearly regular and averaged nonexpansive
operators. 
The bounded linear regularity of the Douglas--Rachford operator
in the transversal case is obtained in Section~\ref{s:Hung}. 
In Section~\ref{s:Fejer}, we recall the key notions of Fej\'er
monotonticity and regularity of collections of sets. 
Our main convergence result on quasicyclic algorithms is
presented in Section~\ref{s:main}. 
In Section~\ref{s:random}, we turn to strong convergence results
for cyclic and random algorithms. 
Applications and numerical results are provided in
Section~\ref{s:apps}.
Notation in this paper is quite standard and follows mostly 
\cite{BC2011}. 

\section{Operators that are (boundedly) linearly regular}

Our linear convergence results depend crucially on the concepts
of (bounded) linear regularity which we introduce now.

\label{s:blrop}

\begin{definition}[(bounded) linear regularity]
\label{d:blrop}
Let $T\colon X\to X$ be such that $\Fix T\neq\varnothing$.
We say that:
\begin{enumerate}
\item 
$T$ is 
\textbf{linearly regular}
with constant $\kappa \geq 0$ if 
\begin{equation}
(\forall x\in X)\quad
d_{\Fix T}(x) \leq \kappa\|x-Tx\|.
\end{equation}
\item
$T$ is 
\textbf{boundedly linearly regular}
if 
\begin{equation}
(\forall \rho>0)(\exi\kappa \geq 0)(\forall x\in \ball(0;\rho))\quad
d_{\Fix T}(x) \leq \kappa\|x-Tx\|;
\end{equation}
note that in general $\kappa$ depends on $\rho$, which we
sometimes indicate by writing $\kappa = \kappa(\rho)$. 
\end{enumerate}
\end{definition}

We clearly have the implication
\begin{equation}
\label{e:0126a}
\text{
linearly regular $\Rightarrow$ boundedly linearly regular.}
\end{equation}

\begin{example}[relaxed projectors]
Let $C$ be a nonempty closed convex subset of $X$ and
let $\lambda \in \left]0,2\right]$.
Then 
$T = (1-\lambda)\Id+\lambda P_C$ is linearly regular with
constant $\lambda^{-1}$.
\end{example}
\begin{proof}
Indeed, $\Fix T = C$ and $(\forall x\in X)$ 
$d_C(x) = \|x-P_Cx\| = \lambda^{-1}\|x-Tx\|$. 
\end{proof}

The following example shows that an operator may be boundedly
linearly regular yet not linearly regular.
This illustrates that the converse of the implication \eqref{e:0126a} 
fails.

\begin{example}[thresholder]
Suppose that $X=\RR$ and 
set 
\begin{equation}
Tx = \begin{cases}
0, &\text{if $|x|\leq 1$;}\\
x-1, &\text{if $x>1$;}\\
x+1, &\text{if $x<-1$.}
\end{cases}
\end{equation}
Then $T$ is boundedly linearly regular with $\kappa(\rho) =
\max\{\rho,1\}$;
however, $T$ is not linearly regular.
\end{example}
\begin{proof}
Let $x\in X$.
Since $\Fix T=\{0\}$, we deduce 
\begin{equation}
d_{\Fix T}(x) = |x| = \max\big\{|x|,1\big\}\min\big\{|x|,1\big\}
\end{equation}
and 
\begin{equation}
|x-Tx| = 
\begin{cases}
|x|, &\text{if $|x|\leq 1$;}\\
1, &\text{if $|x|>1$}
\end{cases}
= \min\big\{|x|,1\big\}. 
\end{equation}
If $x\notin\Fix T$, then $d_{\Fix T}(x)/|x-Tx| = \max\{|x|,1\}$
and the result follows.
\end{proof}

\begin{theorem}
\label{t:chess}
Let $T\colon X\to X$ be linear and nonexpansive with 
$\ran(\Id-T)$ closed.
Then $T$ is linearly regular. 
\end{theorem}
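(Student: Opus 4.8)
The plan is to reduce the statement to the bounded inverse theorem. Set $L := \Id - T$; since $T$ is linear and nonexpansive we have $\|T\|\le 1$, so $L$ is a bounded linear operator, and $\Fix T = \ker L$ is a closed linear subspace (in particular containing $0$, so $\Fix T\neq\varnothing$). Consequently $d_{\Fix T}(x) = \|x - P_{\Fix T}x\| = \|P_{(\ker L)^\perp}x\|$, and because $L$ annihilates $\ker L$ we have $Lx = L\,P_{(\ker L)^\perp}x$. Hence it suffices to produce a constant $\kappa\ge 0$ with $\|u\|\le\kappa\|Lu\|$ for every $u\in(\ker L)^\perp$; taking $u=P_{(\ker L)^\perp}x$ then yields $d_{\Fix T}(x)\le\kappa\|x-Tx\|$, which is exactly linear regularity.

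First I would analyze the restriction $L_0 := L|_{(\ker L)^\perp}\colon (\ker L)^\perp\to\ran L$. It is injective, since any $u\in(\ker L)^\perp$ with $Lu=0$ lies in $\ker L\cap(\ker L)^\perp=\{0\}$; and it is surjective onto $\ran L$, since every $y=Lz$ equals $L\,P_{(\ker L)^\perp}z$. Thus $L_0$ is a bounded linear bijection from $(\ker L)^\perp$ onto $\ran L$.

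The key step is to invoke the bounded inverse theorem: $(\ker L)^\perp$ is a closed subspace of $X$, hence complete, and $\ran L$ is complete \emph{precisely because it is closed} — this is where the hypothesis enters decisively. Therefore $L_0^{-1}\colon\ran L\to(\ker L)^\perp$ is a bounded linear operator; setting $\kappa:=\|L_0^{-1}\|$ gives $\|u\| = \|L_0^{-1}L_0 u\|\le\kappa\|Lu\|$ for all $u\in(\ker L)^\perp$, as required.

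The only real obstacle is ensuring that the open mapping machinery applies, and this hinges entirely on completeness of both spaces. The closedness of $\ran(\Id-T)$ is exactly the assumption guaranteeing that the range is a Hilbert space, without which $L_0^{-1}$ need not be bounded and the inequality can fail; everything else is the routine orthogonal decomposition $X=\ker L\oplus(\ker L)^\perp$.
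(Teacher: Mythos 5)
Your proof is correct, and its skeleton matches the paper's: both decompose $x$ orthogonally with respect to $\ker(\Id-T)$ and invoke open-mapping/closed-graph machinery to bound $\Id-T$ below on $(\ker(\Id-T))^\perp$. The genuine difference is in how that lower bound is justified. The paper first shows $A=\Id-T$ is maximally monotone (here nonexpansiveness is used in an essential way), deduces $\ker A=\ker A^*$ and hence $(\Fix T)^\perp=(\ker A)^\perp=\overline{\operatorname{ran}}\,A^*=\overline{\operatorname{ran}}\,A=\ran(\Id-T)$, and only then applies a Closed Graph Theorem argument on this identified subspace. You bypass the monotone-operator layer entirely: you apply the bounded inverse theorem directly to the bijection $L_0=L|_{(\ker L)^\perp}\colon(\ker L)^\perp\to\ran L$, which needs no identification of $(\ker L)^\perp$ with $\ran L$. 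What your route buys is economy and generality --- your argument works for \emph{any} bounded linear operator with closed range, so nonexpansiveness is used only to guarantee $\|T\|\le 1<\infty$; what the paper's route buys is consistency with the monotone-operator framework used throughout (the same citations to maximal monotonicity and adjoint ranges reappear elsewhere, e.g.\ in Example~\ref{ex:DR1}), plus the explicit structural fact $(\Fix T)^\perp=\ran(\Id-T)$, which is of independent interest there.
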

\begin{proof}
Set $A = \Id-T$.
Then $A$ is maximally monotone by \cite[Example~20.26]{BC2011},
and 
$(\Fix T)^\perp = (\ker A)^\perp = \cran A^* = \cran A =
\cran(\Id-T) = \ran(\Id-T)$ using \cite[Proposition~20.17]{BC2011}. 
By the Closed Graph Theorem
(see, e.g., \cite[Theorem~8.18]{Deutsch}), there exists 
$\beta>0$ such that
\begin{equation}
\big(\forall z\in \ker(A)^\perp)\big)
\quad
\|Az\|\geq \beta \|z\|.
\end{equation}
Now let $x\in X$ and split $x$ into 
$x=y+z$, where
$y=P_{\ker A}x = P_{\Fix T}x$
and 
$z= P_{(\ker A)^\perp}x = P_{\ran A}x = P_{\ran(\Id-T)}x$. 
Then
\begin{equation}
\|x-Tx\| = \|Ax\| = \|A(y+z)\| = \|Az\|
\geq\beta\|z\|=\beta\|x-P_{\Fix T}x\|=\beta d_{\Fix T}(x)
\end{equation}
and the result follows.
\end{proof}

\begin{example}[Douglas--Rachford operator for two subspaces]
\label{ex:DR1}
Let $U$ and $V$ be closed subspaces of $X$ such that
$U+V$ is closed, and set 
$T = P_VP_U + P_{V^\perp}P_{U^\perp}$.
Then 
$\Fix T = (U\cap V)+(U^\perp \cap V^\perp)$, and 
$\ran(\Id-T) = (U+V)\cap (U^\perp + V^\perp)$ is closed;
consequently, $T$ is linearly regular. 
\end{example}
\begin{proof}
The formula for $\Fix T$ is in, e.g., \cite{BBNPW}. 
On the one hand, it is well known (see, e.g.,
\cite[Corollary~15.35]{BC2011}) that
$U^\perp+V^\perp$ is closed as well. 
On the other hand,
\cite[Corollary~2.14]{BMW2} implies that
$\ran(\Id-T)= (U+V)\cap (U^\perp + V^\perp)$.
Altogether, $\ran(\Id-T)$ is closed. 
Finally, apply Theorem~\ref{t:chess}.
\end{proof}

\begin{example}
\label{ex:2lines}
Suppose that $X=\RR^2$, let $\theta\in\left]0,\pi/2\right]$, set
$U=\RR\cdot(1,0)$, $V=\RR\cdot(\cos\theta,\sin\theta)$, and 
$T = P_VP_U+P_{V^\perp}P_{U^\perp}$. 
Then $T$ is linearly regular with rate $1/\sin(\theta)$.
\end{example}
\begin{proof}
Let $x\in X$. 
A direct computation (or \cite[Section~5]{BBNPW}) yields
\begin{equation}
T = \cos(\theta) \begin{pmatrix} \cos(\theta) & -\sin(\theta)\\
\sin(\theta) & \cos(\theta),
\end{pmatrix}
\end{equation}
i.e., $T$ shrinks the vector by 
$\cos(\theta)\in\left[0,1\right[$ and rotates it by $\theta$. 
Hence $\Fix T = \{0\}$ and 
\begin{equation}
d_{\Fix T}(x) = \|x\|. 
\end{equation}
On the other hand, using $1-\cos^2(\theta)=\sin^2(\theta)$, we obtain
\begin{equation}
\Id - T = 
\sin(\theta)
\begin{pmatrix}
\sin(\theta) & \cos(\theta)\\
-\cos(\theta) & \sin(\theta)
\end{pmatrix}
\end{equation}
and hence
\begin{equation}
\|x-Tx\| = \sin(\theta)\|x\|. 
\end{equation}
Altogether, 
$d_{\Fix T}(x) = \|x\| = (1/\sin(\theta))\sin(\theta)\|x\|
=(1/\sin(\theta))\|x-Tx\|$. 
\end{proof}

We conclude this section by comparing our notion of bounded linear
regularity to metric regularity of set-valued operators. 

\begin{remark}
Suppose that $T$ is firmly nonexpansive and thus the resolvent of
a maximally monotone operator $A$. 
Suppose that $\bar{x}\in X$ is 
such that $0\in A\bar{x}$, i.e., $\bar{x}\in\Fix T$. 
Then \emph{metric subregularity} of $A$ at $\bar{x}$ means
that there exists $\delta>0$ and $\gamma>0$ such that
$x\in B(\bar{x};\delta)$ $\Rightarrow$ $d_{A^{-1}0}(x) \leq
\gamma d_{Ax}(0)$.
In terms of $T$, this is expressed as
$x\in \ball(\bar{x};\delta)$ $\Rightarrow$ $d_{\Fix T}(x) \leq
\gamma \inf\|x-T^{-1}x\|$. 
If $x=Ty  \in \ball(\bar{x};\delta)$, then 
the Minty parametrization yields
\begin{equation}
d_{\Fix T}(Ty) \leq \gamma \|y-Ty\|;
\end{equation}
moreover, $d_{\Fix T}(y)\leq (1+\gamma)\|y-Ty\|$.
This is related to bounded linear regularity of $T$.
The interested reader is referred to \cite{DonRock} for further
information on metric subregularity;
see also \cite{ADG} and \cite{Kru}.
\end{remark}

\section{Averaged nonexpansive operators}

\label{s:avnon}

We work mostly within the class of averaged nonexpansive
mappings which have proven to be a good compromise between
generality and usability.

\begin{definition}
The mapping $T\colon X\to X$ is \emph{averaged nonexpansive}
if there exists $\lambda\in\left[0,1\right[$ and $N\colon X\to X$
nonexpansive such that $T = (1-\lambda)\Id +\lambda N$.
\end{definition}

The class of averaged nonexpansive operators is closed under compositions and convex
combinations, and it includes all firmly nonexpansive mappings;
see, e.g., \cite{Comb04} for further information. 

\begin{example}
\label{ex:Banach2}
Let $T\colon X\to X$ be $\beta$-Lipschitz with $\beta\in\zeroun$.
Then $T$ is averaged.
\end{example}
\begin{proof}
Let $\varepsilon\in\left]0,(1-\beta)/2\right[\subset\zeroun$. 
Then $(\beta+\varepsilon)/(1-\varepsilon)\in\zeroun$. 
Now $(1-\varepsilon)^{-1}T$ is $(1-\varepsilon)^{-1}\beta$-Lipschitz
and $-\varepsilon(1-\varepsilon)^{-1}\Id$ is
$\varepsilon(1-\varepsilon)^{-1}$-Lipschitz, hence
\begin{equation}
N = (1-\varepsilon)^{-1}T - \varepsilon(1-\varepsilon)^{-1}\Id
\end{equation}
is nonexpansive. Set $\lambda= 1-\varepsilon\in\zeroun$. 
Then $(1-\lambda)\Id + \lambda N = \varepsilon\Id + (1-\varepsilon)N=T$
and $T$ is therefore averaged. 
\end{proof}

\begin{fact}
{\rm (See, e.g., \cite[Proposition~4.25(iii)]{BC2011}.)}
\label{f:aver}
Let $T\colon X\to X$ be averaged nonexpansive.
Then there exists $\sigma >0$ such that
\begin{equation}
(\forall x\in X)(\forall z\in\Fix T)\quad
\sigma\|x-Tx\|^2 \leq \|x-z\|^2-\|Tx-z\|^2.
\end{equation}
\end{fact}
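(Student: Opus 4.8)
The plan is to work directly from the defining representation $T = (1-\lambda)\Id + \lambda N$, with $\lambda\in\left[0,1\right[$ and $N$ nonexpansive, and to extract $\sigma$ explicitly in terms of $\lambda$. If $\lambda = 0$, then $T = \Id$, so $x - Tx = 0$ for every $x$ and the asserted inequality reads $0 \leq \|x-z\|^2 - \|x-z\|^2 = 0$, which holds with any $\sigma>0$. I would therefore assume henceforth that $\lambda\in\zeroun$.

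First I would record that $\Fix T = \Fix N$: for $\lambda\neq 0$, the equation $Tz = z$ is equivalent to $\lambda(Nz - z) = 0$, i.e. to $Nz = z$. Hence any $z\in\Fix T$ satisfies $Nz = z$, and nonexpansiveness of $N$ then gives the crucial estimate $\|Nx - z\| = \|Nx - Nz\| \leq \|x - z\|$ for every $x\in X$. It is important to note that $z$ is a fixed point of $N$, not merely of $T$; this is exactly what licenses the use of nonexpansiveness of $N$.

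Next comes the key computation. Writing $Tx - z = (1-\lambda)(x-z) + \lambda(Nx - z)$ and applying the convex-combination identity
\[
\|(1-\lambda)a + \lambda b\|^2 = (1-\lambda)\|a\|^2 + \lambda\|b\|^2 - \lambda(1-\lambda)\|a - b\|^2
\]
with $a = x - z$ and $b = Nx - z$ (so that $a - b = x - Nx$), I would obtain, after subtracting the resulting expression from $\|x-z\|^2$,
\[
\|x-z\|^2 - \|Tx - z\|^2 = \lambda\big(\|x-z\|^2 - \|Nx - z\|^2\big) + \lambda(1-\lambda)\|x - Nx\|^2.
\]
The first summand on the right is nonnegative by the estimate above, so $\|x-z\|^2 - \|Tx - z\|^2 \geq \lambda(1-\lambda)\|x - Nx\|^2$.

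Finally, since $x - Tx = \lambda(x - Nx)$, we have $\|x - Nx\|^2 = \lambda^{-2}\|x - Tx\|^2$, whence
\[
\|x-z\|^2 - \|Tx - z\|^2 \geq \lambda(1-\lambda)\lambda^{-2}\|x-Tx\|^2 = \big((1-\lambda)/\lambda\big)\|x - Tx\|^2 .
\]
Thus $\sigma = (1-\lambda)/\lambda > 0$ works. There is no genuine obstacle here; the computation is routine, and the only points demanding a little care are the degenerate case $\lambda = 0$ and the observation that a fixed point of $T$ is automatically a fixed point of $N$, which is what makes the nonexpansiveness of $N$ usable.
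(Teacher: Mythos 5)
Your proof is correct and follows essentially the same route as the result the paper cites for this fact (Proposition~4.25(iii) of \cite{BC2011}): the paper gives no proof of its own, and the standard argument is exactly your combination of the convexity identity $\|(1-\lambda)a+\lambda b\|^2=(1-\lambda)\|a\|^2+\lambda\|b\|^2-\lambda(1-\lambda)\|a-b\|^2$ with nonexpansiveness of $N$ at a fixed point, yielding the same constant $\sigma=(1-\lambda)/\lambda$. Your handling of the degenerate case $\lambda=0$ and the observation that $\Fix T=\Fix N$ are the right points of care, and nothing is missing.
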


The following two properties are crucial to our subsequent
analysis.

\begin{corollary}[$\sigma(T)$ notation]
\label{c:aver}
Let $T\colon X\to X$ be averaged nonexpansive.
Then there exists $\sigma = \sigma(T) >0$ such that
for every nonempty subset $C$ of $\Fix T$, we have 
\begin{equation}
(\forall x\in X)\quad
\sigma \|x-Tx\|^2 \leq d_C^2(x)-d_C^2(Tx). 
\end{equation}
\end{corollary}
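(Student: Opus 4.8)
The plan is to derive this corollary directly from Fact~\ref{f:aver} by choosing the appropriate point $z \in \Fix T$ for each $x$. The key observation is that $\sigma(T)$ can be taken to be exactly the constant $\sigma$ supplied by Fact~\ref{f:aver}, which depends only on $T$ (through its averaging parameter $\lambda$) and not on the set $C$. So the first step is to fix $\sigma = \sigma(T) > 0$ as given by Fact~\ref{f:aver}, and then show that this single constant works simultaneously for \emph{every} nonempty $C \subseteq \Fix T$.

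The main step is the reduction from the pointwise distance inequality to the distance-to-set inequality. First I would fix an arbitrary $x \in X$ and a nonempty $C \subseteq \Fix T$. The idea is to evaluate Fact~\ref{f:aver} not at an arbitrary $z \in \Fix T$ but at a point of $C$ that witnesses the distance $d_C(Tx)$. Concretely, let $z \in C$ satisfy $\|Tx - z\| = d_C(Tx)$ (if $C$ is closed; otherwise pass to points with $\|Tx-z\|$ arbitrarily close to $d_C(Tx)$ and take an infimum at the end). Since $C \subseteq \Fix T$, Fact~\ref{f:aver} applies to this $z$ and yields
\begin{equation}
\sigma\|x-Tx\|^2 \leq \|x-z\|^2 - \|Tx-z\|^2.
\end{equation}
Now I bound the right-hand side from above: since $z \in C$, we have $d_C(x) \leq \|x-z\|$, and by the choice of $z$ we have $\|Tx-z\| = d_C(Tx)$. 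Therefore
\begin{equation}
\sigma\|x-Tx\|^2 \leq \|x-z\|^2 - d_C^2(Tx),
\end{equation}
and it remains to replace $\|x-z\|^2$ by $d_C^2(x)$. This is the one point requiring a little care: $d_C(x) \leq \|x-z\|$ goes the wrong way for a direct substitution into an upper bound.

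The resolution is to choose $z$ the other way around, i.e.\ as a near-projection of $x$ rather than of $Tx$. So I would instead pick $z \in C$ with $\|x - z\|$ close to $d_C(x)$; then $\|x-z\|^2 \approx d_C^2(x)$ gives the correct upper bound, while $d_C(Tx) \leq \|Tx - z\|$ gives $-\|Tx-z\|^2 \leq -d_C^2(Tx)$ in the right direction. Combining,
\begin{equation}
\sigma\|x-Tx\|^2 \leq \|x-z\|^2 - \|Tx-z\|^2 \leq d_C^2(x) - d_C^2(Tx),
\end{equation}
which is exactly the claimed inequality. The anticipated obstacle is precisely this matching of the two distance terms against a single choice of $z$: one wants an upper bound on $\|x-z\|^2$ and simultaneously a lower bound on $\|Tx-z\|^2$, so the witness $z$ must be selected to approximate $d_C(x)$ (giving the upper bound directly), after which the trivial inequality $d_C(Tx) \leq \|Tx-z\|$ handles the other term for \emph{free}. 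If $C$ is not assumed closed, I would run the argument with $z$ satisfying $\|x-z\|^2 \leq d_C^2(x) + \ve$ and let $\ve \downarrow 0$.
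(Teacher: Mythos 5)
Your proof is correct and is precisely the argument the paper leaves implicit: Corollary~\ref{c:aver} is stated without proof as an immediate consequence of Fact~\ref{f:aver}, obtained exactly by applying that fact with $z\in C\subseteq\Fix T$ chosen as a near-minimizer of $\|x-\cdot\|$ over $C$, using $d_C(Tx)\leq\|Tx-z\|$ for the other term, and letting $\ve\downarrow 0$. Your choice of witness (approximating $d_C(x)$ rather than $d_C(Tx)$) is the right one, and the constant $\sigma$ from Fact~\ref{f:aver} indeed depends only on $T$, so it works uniformly over all nonempty $C\subseteq\Fix T$.
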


\begin{corollary}
\label{c:0123}
Let $I$ be a finite ordered index set, let
$(T_i)_{i\in I}$ be family of averaged nonexpansive operators
with $\sigma_i = \sigma(T_i)$, and
let $(\omega_i)_{i\in I}$ be in $[0,1]$ such that
$\sum_{i\in I}\omega_i=1$. Set $I_+ = \menge{i\in
I}{\omega_i>0}$, and set $\sigma_+ = \min_{i\in I_+}\sigma_i$. 
Let $x\in X$, and set $y = \sum_{i\in I}\omega_i T_ix$
Then 
\begin{subequations}
\begin{align}
\big(\forall z\in \bigcap_{i\in I_+}\Fix T_i\big)\quad
\|x-z\|^2 &\geq \|y-z\|^2 +
\sum_{i\in I}\omega_i\sigma_i\|x-T_ix\|^2\\
&\geq \|y-z\|^2 +\sigma_+\|x-y\|^2.
\end{align}
\end{subequations}
\end{corollary}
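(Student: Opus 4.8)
The plan is to prove the first inequality by summing the single-operator estimate from Corollary~\ref{c:aver} across the family, and then to obtain the second inequality by a convexity (Jensen-type) argument. Throughout, fix $z\in\bigcap_{i\in I_+}\Fix T_i$.

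First I would apply Corollary~\ref{c:aver} with $C=\{z\}$ to each operator $T_i$ for $i\in I_+$. Since $z\in\Fix T_i$ for every such $i$, this gives $\sigma_i\|x-T_ix\|^2\le \|x-z\|^2-\|T_ix-z\|^2$. Multiplying by $\omega_i\ge 0$ and summing over $i\in I$ (the terms with $i\notin I_+$ contribute nothing because $\omega_i=0$ there), and using $\sum_{i\in I}\omega_i=1$, I get
\begin{equation}
\sum_{i\in I}\omega_i\sigma_i\|x-T_ix\|^2 \le \|x-z\|^2 - \sum_{i\in I}\omega_i\|T_ix-z\|^2.
\end{equation}
The key step is then to bound the weighted sum of squared distances from below by $\|y-z\|^2$, where $y=\sum_{i\in I}\omega_i T_ix$. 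This is exactly the convexity of $t\mapsto\|t-z\|^2$: by Jensen's inequality (or equivalently by expanding and using $\sum\omega_i=1$ together with the discarded nonnegative variance term),
\begin{equation}
\Big\|\sum_{i\in I}\omega_i T_ix - z\Big\|^2 \le \sum_{i\in I}\omega_i\|T_ix-z\|^2.
\end{equation}
Substituting $y$ on the left and rearranging the two displays yields $\|x-z\|^2\ge\|y-z\|^2+\sum_{i\in I}\omega_i\sigma_i\|x-T_ix\|^2$, which is the first asserted inequality.

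For the second inequality I would pass from the weighted sum $\sum_{i\in I}\omega_i\sigma_i\|x-T_ix\|^2$ to $\sigma_+\|x-y\|^2$. Since $\sigma_+=\min_{i\in I_+}\sigma_i$ and $\sigma_i\ge\sigma_+$ for every $i\in I_+$ (while $\omega_i=0$ for $i\notin I_+$), I can bound $\sum_{i\in I}\omega_i\sigma_i\|x-T_ix\|^2\ge\sigma_+\sum_{i\in I}\omega_i\|x-T_ix\|^2$. It then remains to show $\sum_{i\in I}\omega_i\|x-T_ix\|^2\ge\|x-y\|^2$, which is again Jensen applied to the convex function $t\mapsto\|x-t\|^2$: since $y=\sum_{i\in I}\omega_i T_ix$ is a convex combination of the $T_ix$, we have $\|x-y\|^2=\|\sum_{i\in I}\omega_i(x-T_ix)\|^2\le\sum_{i\in I}\omega_i\|x-T_ix\|^2$. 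Chaining these gives the second inequality.

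I do not expect a serious obstacle here, since both inequalities reduce to convexity of the squared norm; the only point requiring care is the bookkeeping with $I_+$ versus $I$, namely ensuring that every invocation of $z\in\Fix T_i$ is used only for indices with $\omega_i>0$ and that the zero-weight terms are silently absorbed. The mild subtlety is that $\sigma_i$ for $i\notin I_+$ need not be comparable to $\sigma_+$, but this never matters because those terms carry weight $\omega_i=0$; I would phrase the estimate so that the minimum is taken only over $I_+$, exactly as in the statement.
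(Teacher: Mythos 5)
Your proof is correct and follows essentially the same route as the paper's: both apply the averaged-nonexpansiveness estimate (Corollary~\ref{c:aver} with $C=\{z\}$) termwise with weights $\omega_i$, and use convexity of the squared norm (Jensen) twice --- once for $\|y-z\|^2\le\sum_i\omega_i\|T_ix-z\|^2$ and once for $\|x-y\|^2\le\sum_i\omega_i\|x-T_ix\|^2$. Your explicit handling of the $I_+$ versus $I$ bookkeeping is a minor presentational refinement of what the paper leaves implicit.
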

\begin{proof}
Indeed, we have
\begin{subequations}
\begin{align}
\|y-z\|^2 &\leq \sum_{i\in I}\omega_i\|T_ix-z\|^2
\leq\sum_{i\in I}\omega_i\big(\|x-z\|^2 -\sigma_i\|x-T_ix\|^2\big)\\
&=\|x-z|^2-\sum_{i\in I}\omega_i\sigma_i\|x-T_ix\|^2
\leq\|x-z\|^2-\sigma_+\|x-y\|^2, 
\end{align}
\end{subequations}
as required.
\end{proof}

\begin{lemma}
\label{l:sinus}
Let $T\colon X\to X$ be averaged nonexpansive such that
\begin{multline}
\label{e:skating}
(\forall \rho>0)(\exi\theta<1)(\forall x\in \ball(0;\rho))(\exi y\in\Fix
T)\\
\scal{x-y}{Tx-y}\leq\theta\|x-y\|\|Tx-y\|.
\end{multline}
Then $T$ is boundedly linearly regular;
moreover, $T$ is linearly regular if $\theta$
does not depend on $\rho$.
\end{lemma}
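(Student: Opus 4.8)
The plan is to fix a radius $\rho>0$, take the constant $\theta<1$ supplied by \eqref{e:skating}, and show directly that $\kappa(\rho):=(1-\theta^2)^{-1/2}$ is an admissible constant on $\ball(0;\rho)$. First I would dispose of a trivial reduction: we may assume $\theta\in[0,1[$. Indeed, if the given $\theta$ is negative, then $\scal{x-y}{Tx-y}\leq\theta\|x-y\|\|Tx-y\|\leq 0$, so the inequality in \eqref{e:skating} continues to hold after replacing $\theta$ by $0$; thus passing from $\theta$ to $\max\{\theta,0\}$ only weakens the hypothesis while guaranteeing $1-\theta^2\in\,]0,1]$. Note also that \eqref{e:skating} forces $\Fix T\neq\varnothing$, so that $d_{\Fix T}$ is well defined and the target inequality of Definition~\ref{d:blrop} makes sense.

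The heart of the matter is a single completion of squares. Fix $x\in\ball(0;\rho)$ and let $y\in\Fix T$ be the point furnished by \eqref{e:skating}. Writing $x-Tx=(x-y)-(Tx-y)$ and expanding, the bound $\scal{x-y}{Tx-y}\leq\theta\|x-y\|\|Tx-y\|$ gives
\[
\|x-Tx\|^2 \;\geq\; \|x-y\|^2 - 2\theta\|x-y\|\|Tx-y\| + \|Tx-y\|^2 .
\]
I would then recognize the right-hand side as
\[
(1-\theta^2)\|x-y\|^2 + \big(\theta\|x-y\|-\|Tx-y\|\big)^2 ,
\]
and discard the nonnegative square, obtaining $\|x-Tx\|^2\geq(1-\theta^2)\|x-y\|^2$.

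Since $y\in\Fix T$ we have $d_{\Fix T}(x)\leq\|x-y\|$, so the previous estimate yields
\[
d_{\Fix T}(x)\;\leq\;\|x-y\|\;\leq\;\frac{1}{\sqrt{1-\theta^2}}\,\|x-Tx\| .
\]
As $x\in\ball(0;\rho)$ was arbitrary, this is precisely bounded linear regularity with $\kappa(\rho)=(1-\theta^2)^{-1/2}$; and if $\theta$ may be chosen independently of $\rho$, then so may $\kappa$, giving linear regularity. I do not anticipate a genuine obstacle: the whole estimate rests on the elementary algebraic identity above, and in fact the averaged-nonexpansive structure of $T$ is never invoked (it enters only through the ambient context in which $\Fix T\neq\varnothing$ and the regularity inequality is the natural conclusion). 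The one point meriting a line of care is the reduction to $\theta\geq 0$, which is what keeps $1-\theta^2$ strictly positive so that the displayed square root is real.
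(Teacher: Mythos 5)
Your proof is correct and takes essentially the same route as the paper's: fix $x\in\ball(0;\rho)$, obtain $y$ from \eqref{e:skating}, expand $\|x-Tx\|^2$ via $x-Tx=(x-y)-(Tx-y)$, bound the cross term by the hypothesis, complete a square and discard the nonnegative remainder, giving $d_{\Fix T}(x)\le\|x-y\|\le\kappa\,\|x-Tx\|$. The differences are minor: the paper groups terms as $(1-\theta)\bigl(\|x-y\|^2+\|Tx-y\|^2\bigr)+\theta\bigl(\|x-y\|-\|Tx-y\|\bigr)^2$, yielding $\kappa=(1-\theta)^{-1/2}$ rather than your slightly sharper $(1-\theta^2)^{-1/2}$; your explicit reduction to $\theta\ge 0$ is a point of care the paper leaves implicit (its grouping also needs $\theta\ge 0$ to discard the squared term); and your remark that averaged nonexpansiveness is never actually used is borne out by the paper's own proof, which introduces $\sigma(T)$ but never invokes it.
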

\begin{proof}
We abbreviate $\sigma(T)$ by $\sigma$. 
Let $\rho>0$  and let $x\in\ball(0;\rho)$. 
Obtain $\theta$ and $y\in\Fix T$ as in \eqref{e:skating}.
Then
\begin{subequations}
\begin{align}
\|x-Tx\|^2 &=\|x-y\|^2 + \|y-Tx\|^2 + 2\scal{x-y}{y-Tx}\\
&\geq \|x-y\|^2+\|y-Tx\|^2-2\theta\|x-y\|\|Tx-y\|\\
&= (1-\theta)\big(\|x-y\|^2+\|y-Tx\|^2\big)
+\theta\big(\|x-y\|-\|y-Tx\|\big)^2\\
&\geq (1-\theta)\|x-y\|^2.
\end{align}
\end{subequations}
Hence
$(1-\theta)^{-1}\|x-Tx\|^2 \geq d_{\Fix T}^2(x)$. 
\end{proof}

The following example can be viewed as a generalization of
Example~\ref{ex:2lines}.

\begin{example}
Suppose that $S\colon X\to X$ is linear such that $S^*=-S$ and
$(\forall x\in X)$ $\|Sx\|=\|x\|$.
Let $\alpha\in\left]0,\pi/2\right]$, let
$\beta\in\left]-1,1\right[$, and set
$T = \beta(\cos(\alpha)\Id+\sin(\alpha)S)$.
Then $T$ is linearly regular. 
\end{example}
\begin{proof}
Set $R=\cos(\alpha)\Id+\sin(\alpha)S$. 
Then $T=\beta R$ and $(\forall x\in X)$ $\|Rx\|=\|Sx\|=\|x\|$; hence
$\|T\|=|\beta|<1$. 
By Example~\ref{ex:Banach2}, $T$ is averaged. 
Furthermore, $(\forall x\in X)$
$\scal{x}{Tx} = \beta\cos(\alpha)\|x\|^2
=\cos(\alpha)\|x\|\|\beta Rx\| = \cos(\alpha)\|x\|\|Tx\|$. 
The linear regularity of $T$ thus follows from
Lemma~\ref{l:sinus}. 
\end{proof}

We conclude this section with some key inequalities.

\begin{lemma}[key inequalities]
\label{l:key}
Let $T\colon X\to X$ be averaged firmly nonexpansive and
boundedly linearly regular, and let $\rho>0$.
Suppose that $C$ is a nonempty subset of $\Fix T$. 
Then there exist $\alpha\in\left[0,1\right[$,
$\beta\in\left]0,1\right]$, and $\gamma>0$ such that
for every $x\in \ball(0;\rho)$, we have
\begin{align}
&d_{\Fix T}(Tx)\leq \alpha d_{\Fix T}(x);\label{e:key1}\\
&\beta d_{\Fix T}^2(x)\leq \big( d_{\Fix T}(x)-d_{\Fix
T}(Tx)\big)^2\leq\|x-Tx\|^2;\label{e:key2}\\
& d_{C}^2(Tx)\leq d_{C}^2(x)-\gamma d^2_{\Fix T}(x).\label{e:key3}
\end{align}
If $T$ is linearly regular, then these constants do not depend on
$\rho$. 
\end{lemma}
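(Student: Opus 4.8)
The plan is to treat the three estimates \eqref{e:key1}--\eqref{e:key3} in the order \eqref{e:key1}, \eqref{e:key2}, \eqref{e:key3}, since the left-hand inequality in \eqref{e:key2} will follow immediately from \eqref{e:key1}. The only two engines needed are Corollary~\ref{c:aver} (which supplies the constant $\sigma = \sigma(T) > 0$ and, for any nonempty $C \subseteq \Fix T$, the descent $\sigma\|x-Tx\|^2 \le d_C^2(x) - d_C^2(Tx)$) and the definition of bounded linear regularity (Definition~\ref{d:blrop}), which for the given $\rho$ furnishes a $\kappa = \kappa(\rho) \ge 0$ with $d_{\Fix T}(x) \le \kappa\|x-Tx\|$ on $\ball(0;\rho)$. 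I would dispose of the degenerate case first: if $\kappa$ may be taken to be $0$, then $\ball(0;\rho) \subseteq \Fix T$, every distance $d_{\Fix T}(x)$ vanishes on the ball, and all three inequalities hold trivially with, say, $\alpha = 0$, $\beta = 1$, and an arbitrary $\gamma > 0$. Hence I may assume $\kappa > 0$.

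For \eqref{e:key1}, I would apply Corollary~\ref{c:aver} with $C = \Fix T$ to obtain $d_{\Fix T}^2(Tx) \le d_{\Fix T}^2(x) - \sigma\|x-Tx\|^2$, and then insert the regularity bound $\|x-Tx\|^2 \ge \kappa^{-2} d_{\Fix T}^2(x)$ to reach $d_{\Fix T}^2(Tx) \le (1 - \sigma\kappa^{-2}) d_{\Fix T}^2(x)$ on the ball. Before taking a square root I must check that the coefficient lies in $\left[0,1\right[$: combining the same two inequalities gives $\sigma\|x-Tx\|^2 \le d_{\Fix T}^2(x) \le \kappa^2\|x-Tx\|^2$, so $\sigma \le \kappa^2$ as soon as some $x\in\ball(0;\rho)$ lies outside $\Fix T$, which forces $1 - \sigma\kappa^{-2} \in \left[0,1\right[$. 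Thus $\alpha := \sqrt{1 - \sigma\kappa^{-2}}$ works.

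Estimate \eqref{e:key2} then splits cleanly. The right-hand inequality is just the $1$-Lipschitz continuity of $x \mapsto d_{\Fix T}(x)$ applied to the pair $x, Tx$, yielding $|d_{\Fix T}(x) - d_{\Fix T}(Tx)| \le \|x-Tx\|$; squaring gives it. For the left-hand inequality, \eqref{e:key1} gives $d_{\Fix T}(x) - d_{\Fix T}(Tx) \ge (1-\alpha) d_{\Fix T}(x) \ge 0$, and squaring these nonnegative quantities produces $\beta := (1-\alpha)^2 \in \left]0,1\right]$ with $\beta d_{\Fix T}^2(x) \le (d_{\Fix T}(x) - d_{\Fix T}(Tx))^2$. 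Finally, for \eqref{e:key3} I would invoke Corollary~\ref{c:aver} once more, this time with the given set $C$, to get $d_C^2(Tx) \le d_C^2(x) - \sigma\|x-Tx\|^2$, and substitute the regularity bound a last time to replace $\sigma\|x-Tx\|^2$ by $\sigma\kappa^{-2} d_{\Fix T}^2(x)$; then $\gamma := \sigma\kappa^{-2} > 0$ does the job. Since all three constants are built only from $\sigma$ (independent of $\rho$) and $\kappa = \kappa(\rho)$, the closing claim is immediate: if $T$ is linearly regular then $\kappa$ may be chosen independently of $\rho$, and hence so may $\alpha$, $\beta$, and $\gamma$.

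The computations are short; the one place that genuinely needs care, and which I would single out as the main obstacle, is the bookkeeping of the constant ranges, namely verifying $\sigma\kappa^{-2} \le 1$ so that $\alpha$ is a well-defined element of $\left[0,1\right[$, together with a clean handling of the degenerate case $\ball(0;\rho) \subseteq \Fix T$ in which $\kappa = 0$.
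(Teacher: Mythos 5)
Your proof is correct and follows essentially the same route as the paper's: Corollary~\ref{c:aver} combined with the regularity constant $\kappa$ yields \eqref{e:key1}, the nonexpansiveness of $d_{\Fix T}$ together with \eqref{e:key1} yields \eqref{e:key2} with $\beta=(1-\alpha)^2$, and Corollary~\ref{c:aver} applied to $C$ yields \eqref{e:key3} with $\gamma=\sigma\kappa^{-2}$. The only deviations are cosmetic: your $\alpha=\sqrt{1-\sigma\kappa^{-2}}$ is a (slightly sharper) variant of the paper's $\alpha=\sqrt{\sigma^{-1}\kappa^2/(1+\sigma^{-1}\kappa^2)}$ --- the paper rearranges the same chain of inequalities so that $\alpha<1$ is automatic, dispensing with your check that $\sigma\le\kappa^2$ --- and your explicit treatment of the degenerate case $\kappa=0$ (equivalently $\ball(0;\rho)\subseteq\Fix T$) is a point of care the paper silently omits.
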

\begin{proof}
Let us obtain the constants $\kappa = \kappa(\rho)\geq 0$ from
bounded linear regularity and 
$\sigma = \sigma(T)$ from the averaged nonexpansiveness. 
Abbreviate $Z=\Fix T$, and let $x\in \ball(0;\rho)$.
Then 
$d^2_{Z}(Tx)\leq d^2_Z(x) \leq \kappa^{2}\|x-Tx\|^2
\leq \sigma^{-1}\kappa^2(d^2_{Z}(x)-d^2_{Z}(Tx))$
by Corollary~\ref{c:aver}. Hence 
\eqref{e:key1} holds with 
\begin{equation}
\alpha = 
\sqrt{\frac{\sigma^{-1}\kappa^2}{1+\sigma^{-1}\kappa^2}} \in
\left[0,1\right[. 
\end{equation}
Note that  $\alpha$ depends only on $T$ when $T$ is in addition
linearly regular. 
Next, we set 
\begin{equation}
\beta = (1-\alpha)^2 \in\left]0,1\right],
\quad\text{and}\quad \gamma = \sigma \kappa^{-2}, 
\end{equation}
which again depend only on $T$ in the presence of linear
regularity. 
Then, by \eqref{e:key1}, 
$d_Z(x)-d_Z(Tx) \geq (1-\alpha)d_Z(x)$. 
Since $d_Z$ is nonexpansive, we deduce
\begin{equation}
\beta d_Z^2(x) \leq 
\big(d_{Z}(x)-d_{Z}(Tx)\big)^2
\leq \|x-Tx\|^2,
\end{equation}
i.e., \eqref{e:key2}. 
Finally, using Corollary~\ref{c:aver}, 
we conclude that
\begin{align}
d_C^2(Tx) &\leq d_C^2(x)-\sigma\|x-Tx\|^2
\leq d_C^2(x)- \sigma\kappa^{-2}d_Z^2(x), 
\end{align}
i.e., \eqref{e:key3} holds. 
\end{proof}

\section{The Douglas--Rachford Operator for Tranversal Sets}

\label{s:Hung}

In this section,
$X$ is finite-dimensional, 
$A$ and $B$ are nonempty closed convex subsets of $X$
with $A\cap B\neq\varnothing$.
Moreover, $L=\aff(A\cup B)$, $Y=L-L=\lspan(B-A)$,
denote the affine span of $A\cup B$ and the corresponding
parallel space, respectively. 
We also set
\begin{equation}
T = P_BR_A + \Id - P_A,
\end{equation}
i.e., $T$ is the Douglas--Rachford operator for $(A,B)$.
Note that $T(L)\subseteq L$. 
Our next two results are essentially contained in \cite{Hung},
where even nonconvex settings were considered.
In our present convex setting, the proofs become
much less technical. 

\begin{proposition}
\label{p:0208a}
The following hold:
\begin{enumerate}
\item
\label{p:0208a1}
$\Fix T = (A\cap B) + N_{A-B}(0)
= (A\cap B) + \big(Y\cap N_{A-B}(0)\big)+Y^\perp$.
\item
\label{p:0208a2}
$L \cap \Fix T = (A\cap B)+(Y\cap N_{A-B}(0))$.
\item
\label{p:0208a3}
If $\reli A \cap \reli B \neq \varnothing$, then 
$\Fix T = (A\cap B)+Y^\perp$  and
$L \cap \Fix T = A\cap B$. 
\item
\label{p:0208a3.5}
If $\reli A \cap \reli B \neq \varnothing$, then 
$P_{\Fix T} = \Id-P_L + P_{A\cap B}P_L$. 
\item
\label{p:0208a4}
$(\forall\nnn)$ 
$T^n=\Id-P_L + T^nP_L$.
\item
\label{p:0208a5}
$\Id-T = P_L-TP_L$. 
\item
\label{p:0208a6}
If $\reli A \cap \reli B \neq \varnothing$, then 
$d_{\Fix T} = d_{A\cap B}\circ P_L$. 
\end{enumerate}
\end{proposition}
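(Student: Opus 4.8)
The plan is to read off the identity directly from the projector formula obtained in part~\ref{p:0208a3.5}, namely $P_{\Fix T} = \Id - P_L + P_{A\cap B}P_L$, which is valid precisely under the relative-interior hypothesis assumed here. First I would fix an arbitrary $x\in X$ and compute the displacement $x - P_{\Fix T}x$ by substituting this formula: the explicit $x$ and the $\Id$ term cancel, leaving $x - P_{\Fix T}x = P_L x - P_{A\cap B}(P_L x)$. Taking norms then gives $d_{\Fix T}(x) = \|P_L x - P_{A\cap B}(P_L x)\|$, and the whole statement reduces to identifying this last quantity.

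The second step is to recognize the right-hand side as $d_{A\cap B}(P_L x)$. Since $A$ and $B$ are closed and convex with $A\cap B \neq \varnothing$, the set $A\cap B$ is nonempty, closed, and convex; hence $P_{A\cap B}$ is well defined and single-valued, and $d_{A\cap B}(z) = \|z - P_{A\cap B}z\|$ for every $z\in X$. Applying this with $z = P_L x$ yields $\|P_L x - P_{A\cap B}(P_L x)\| = d_{A\cap B}(P_L x) = (d_{A\cap B}\circ P_L)(x)$, which is exactly the asserted equality.

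Should one prefer to avoid part~\ref{p:0208a3.5}, the same conclusion follows from part~\ref{p:0208a3}, which gives $\Fix T = (A\cap B) + Y^\perp$. I would decompose $x = P_L x + (x - P_L x)$, with $P_L x \in L$ and $x - P_L x \in Y^\perp$, and for a generic element $a + w \in \Fix T$ (where $a\in A\cap B\subseteq L$ and $w\in Y^\perp$) split $x - a - w$ into its $Y$-component $P_L x - a$ and its $Y^\perp$-component $(x - P_L x) - w$. Orthogonality of $Y$ and $Y^\perp$ gives $\|x - a - w\|^2 = \|P_L x - a\|^2 + \|(x - P_L x) - w\|^2$, so the infimum over the product set $(A\cap B)\times Y^\perp$ decouples: minimizing the first summand over $a\in A\cap B$ produces $d_{A\cap B}^2(P_L x)$, while the second vanishes upon choosing $w = x - P_L x \in Y^\perp$. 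Taking square roots recovers $d_{\Fix T}(x) = d_{A\cap B}(P_L x)$.

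I expect no serious obstacle. The two points requiring care are the inclusion $A\cap B \subseteq A\cup B \subseteq \aff(A\cup B) = L$, which guarantees that projecting $P_L x$ onto $A\cap B$ genuinely measures the distance within $L$, and—in the second route—the clean decoupling of the $Y$ and $Y^\perp$ minimizations, which rests on the orthogonal splitting $x - P_L x \in Y^\perp$ together with $A\cap B$ lying inside the affine subspace $L$.
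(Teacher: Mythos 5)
Your argument establishes only item (vii) of the proposition, and it does so by taking items (iii) and (iv) as given. Within that limited scope the reasoning is sound: the computation $x - P_{\Fix T}x = P_Lx - P_{A\cap B}(P_Lx)$ read off from the projector formula is exactly what the paper means when it disposes of (vii) with ``Clear from (iv)''; and your alternative route --- splitting $x-a-w$ into the $Y$-component $P_Lx-a$ and the $Y^\perp$-component $(x-P_Lx)-w$ and decoupling the two minimizations --- is a correct, self-contained derivation of (vii) from (iii) alone, which essentially inlines the orthogonal-direct-sum projection argument that the paper outsources to \cite[Propositions~28.1(i) and 28.6]{BC2011} when proving (iv).

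The genuine gap is that the statement comprises items (i)--(vii), and items (i)--(vi) are nowhere addressed. Worse, both of your routes use (iii) or (iv) as an input, and these are themselves among the claims to be proven, so as a proof of the proposition your text is circular. The substance lies precisely in the parts you skipped: (i) is the fixed-point characterization $\Fix T = (A\cap B)+N_{A-B}(0)$, which the paper obtains from \cite[Corollary~3.9]{BCL} and \cite[Theorem~3.5]{BLPW1}; (iii) requires the transversality results \cite[Theorems~3.5 and 3.13]{BLPW1}; (iv) needs the splitting $L=\ell+Y$ with $\ell\in Y^\perp$ together with the cited projection facts; and (v)--(vi) rest on the commutation identities $P_A=P_AP_L$, $P_B=P_BP_L$ and the affinity of $P_L$, which yield $T=\Id-P_L+TP_L$ and then an induction. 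None of these steps appear in your proposal, so it cannot stand as a proof of the proposition as stated; it is, at best, a correct expansion of the paper's one-line justification of its final and easiest item.
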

\begin{proof}
\ref{p:0208a1}:
This follows from \cite[Corollary~3.9]{BCL} and \cite[Theorem~3.5]{BLPW1}. 
\ref{p:0208a2}:
Clear from \ref{p:0208a1}. 
\ref{p:0208a3}:
Use \ref{p:0208a1}, \ref{p:0208a2}, and 
\cite[Theorem~3.5 and Theorem~3.13]{BLPW1}. 
\ref{p:0208a3.5}:
Write $L = \ell+Y$, where $\ell\in Y^\perp$.
Then $P_L(A\cap B)=A\cap B = \ell+P_Y(A\cap B)$ and hence 
$\Fix T = P_Y(A\cap B) \oplus (\ell+Y^\perp)$. 
Now use \cite[Proposition~28.1(i) and Proposition~28.6]{BC2011}. 
\ref{p:0208a4}: 
By \cite[Lemma~3.3]{BLPW1}, $P_A = P_AP_L$ and $P_B=P_BP_L$. 
Moreover, $P_L$ is affine. 
This implies $R_A = R_AP_L + P_L-\Id$, $P_LR_A = R_AP_L$,
and $P_BR_A = P_BP_LR_A = P_BR_AP_L$. 
It follow that $T = \Id-P_L+TP_L = \Id-P_L+P_LTP_L$. 
The result follows then by induction. 
\ref{p:0208a5}: 
Clear from \ref{p:0208a4}. 
\ref{p:0208a6}: 
Clear from \ref{p:0208a3.5}. 
\end{proof}

\begin{lemma}
\label{l:sochi1}
Suppose $\reli A \cap \reli B\neq\varnothing$,
and let $c\in A\cap B$.
Then there exists $\delta>0$ and $\theta < 1$ such that
\begin{equation}
\label{e:sochi2}
\big(\forall x \in L \cap \ball(c;\delta)\big)\quad 
\scal{P_Ax-R_Ax}{P_BR_Ax-R_Ax}\leq \theta
d_A(x)d_B(R_Ax);
\end{equation}
consequently, 
\begin{equation}
\label{e:sochi3}
\big(\forall x \in L \cap \ball(c;\delta)\big)\quad 
\|x-Tx\|^2 \geq \frac{1-\theta}{5}
\max\big\{ d^2_A(x),d^2_B(x)\big\}. 
\end{equation}
\end{lemma}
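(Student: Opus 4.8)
The plan is to establish the angle inequality \eqref{e:sochi2} first and then derive the distance estimate \eqref{e:sochi3} from it by elementary geometry. For the first part, the key observation is that $P_A x - R_A x = P_A x - (2P_A x - x) = x - P_A x$, and since the reflection fixes $A$ pointwise, the quantity $P_B R_A x - R_A x$ is the displacement of $R_A x$ under $P_B$. So the inner product $\scal{P_A x - R_A x}{P_B R_A x - R_A x}$ compares the normal direction $x - P_A x$ (pointing from $A$ back to $x$, a vector in $N_A(P_A x)$) against the normal direction of $B$ at $R_A x$. Near a point $c \in \reli A \cap \reli B$, transversality of $A$ and $B$ forces these two normal cones to be ``uniformly non-aligned,'' which is exactly what yields a cosine bound $\theta < 1$. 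First I would make this precise: for $x \in L$ close to $c$, the points $P_A x$ and $P_B R_A x$ lie near $c$, and the relevant normal vectors live in the parallel space $Y$ (modulo $Y^\perp$, which does not affect the inner product since both displacement vectors lie in $Y$). The relative-interior condition guarantees that the normal cones $N_A$ and $N_B$, restricted to $Y$, intersect only trivially, and a compactness/continuity argument on the unit sphere of $Y$ produces a uniform angle bound $\theta < 1$ valid on some ball $\ball(c;\delta)$.

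Once \eqref{e:sochi2} is in hand, the derivation of \eqref{e:sochi3} is the routine-geometry step. I would write $a = P_A x - R_A x = x - P_A x$ and $b = P_B R_A x - R_A x$, so that $\|a\| = d_A(x)$ and $\|b\| = d_B(R_A x)$, and note that
\begin{equation}
x - Tx = x - (P_B R_A x + x - P_A x) = P_A x - P_B R_A x = a - b.
\end{equation}
Then $\|x - Tx\|^2 = \|a - b\|^2 = \|a\|^2 + \|b\|^2 - 2\scal{a}{b} \geq \|a\|^2 + \|b\|^2 - 2\theta\|a\|\|b\| \geq (1-\theta)(\|a\|^2 + \|b\|^2)$, using \eqref{e:sochi2} and the inequality $2\|a\|\|b\| \leq \|a\|^2 + \|b\|^2$. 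This already controls $d_A^2(x)$ and $d_B^2(R_A x)$; the remaining task is to convert $d_B(R_A x)$ into $d_B(x)$. Here I would use that $R_A$ is nonexpansive (being the reflection associated with a projection onto a convex set) together with a triangle inequality: $d_B(x) \leq d_B(R_A x) + \|x - R_A x\| = d_B(R_A x) + 2d_A(x)$, so $d_B^2(x)$ is bounded by a fixed multiple of $d_A^2(x) + d_B^2(R_A x)$. Tracking the constants through this substitution is what produces the factor $\tfrac{1}{5}$ in front of the maximum.

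The main obstacle I anticipate is the first step, namely proving the uniform angle bound $\theta < 1$ rather than a merely pointwise one at $c$. Pointwise at $c$ the inner product of two normal directions could in principle be close to $\|a\|\|b\|$ only if the normals become parallel, and transversality (via $\reli A \cap \reli B \neq \varnothing$) rules this out; but one must argue that the bound persists uniformly as $x$ ranges over a whole neighborhood, which requires either an upper-semicontinuity argument for the normal cones or an explicit separation estimate. The cleanest route, which I expect the authors to take, is to exploit Lemma~\ref{l:sinus} and the structural results of Proposition~\ref{p:0208a} — in particular that the relevant action takes place within $Y$ where $A$ and $B$ genuinely cross — reducing the problem to a finite-dimensional transversality estimate on the sphere, where continuity and compactness deliver the uniform $\theta$. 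The factor $5$ is comfortably loose, suggesting the intended argument does not aim for sharp constants, which makes the bookkeeping in the second paragraph forgiving.
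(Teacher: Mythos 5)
Your proposal matches the paper's proof in both halves: the paper establishes \eqref{e:sochi2} by exactly the compactness-on-the-sphere route you sketch --- arguing by contradiction, normalizing $u_n=(x_n-P_Ax_n)/\|x_n-P_Ax_n\|$ and $v_n=(P_BR_Ax_n-R_Ax_n)/\|P_BR_Ax_n-R_Ax_n\|$ in $Y$, passing to convergent subsequences, and contradicting the transversality fact $N_A(c)\cap\big(-N_B(c)\big)\cap Y=\{0\}$ (which it cites from \cite{BLPW1}, rather than deriving it via Lemma~\ref{l:sinus} or Proposition~\ref{p:0208a} as you tentatively suggested) --- and it derives \eqref{e:sochi3} by the identical expansion $x-Tx=(P_Ax-R_Ax)+(R_Ax-P_BR_Ax)$ together with $d_B(x)\leq 2d_A(x)+d_B(R_Ax)$ and the $(2,1)$ Cauchy--Schwarz estimate that produces the constant $5$. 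The one point to tighten is the sign in your trivial-intersection claim: what transversality provides, and what the limiting argument needs, is that $N_A(c)$ meets $-N_B(c)$ (not $N_B(c)$) only at $0$ within $Y$.
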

\begin{proof}
Since $\reli A \cap \reli B\neq\varnothing$,
we deduce from \cite[Lemma~3.1 and Theorem~3.13]{BLPW1} that 
\begin{equation}
\label{e:sochi1}
N_A(c)\cap \big(-N_B(c)\big)\cap Y = \{0\}.
\end{equation}
Now suppose that \eqref{e:sochi2} fails. 
Noting that $P_A-R_A = \Id-P_A$, we obtain
a sequence $(x_n)_\nnn$ in $L$ converging to $c$ 
and a sequence $\theta_n\to 1^-$ such that 
for every $\nnn$, 
\begin{equation}
\scal{P_Ax_n-R_Ax_n}{P_BR_Ax_n-R_Ax_n} >  \theta_n
\|P_Ax_n-R_Ax_n\|\|P_BR_Ax_n-R_Ax_n\|.
\end{equation}
Hence
\begin{equation}
\scal{
\frac{x_n-P_Ax_n}{\|x_n-P_Ax_n\|}}{\frac{P_BR_Ax_n-R_Ax_n}{\|P_BR_Ax_n-R_Ax_n\|}}
\to 1^-.
\end{equation}
Set $u_n = (x_n-P_Ax_n)/\|x_n-P_Ax_n\|\in Y \cap N_A(P_Ax_n)$
and $v_n = (P_BR_Ax_n-R_Ax_n)/\|P_BR_Ax_n-R_Ax_n\|\in Y  \cap - 
N_B(P_BR_Ax_n)$.
After passing to subsequences if necessary we assume that
$u_n\to u$ and $v_n\to v$. 
Then $\scal{u}{v}=1$ and thus $v=u$. 
Since $x_n\to c$, we deduce that $P_Ax_n\to P_Ac=c$,
$R_Ax_n\to c$, and $P_BR_Ax_n\to c$. 
Thus, $u\in N_A(c)$ and $-u\in N_B(c)$. 
Altogether,
$u\in N_A(c)\cap (-N_B(c))\cap Y\smallsetminus\{0\}$,
which contradicts \eqref{e:sochi1}. 
We thus have proved \eqref{e:sochi2}. 

Now let $x\in \ball(c;\delta)\cap L$. 
Because 
$d_B$ is nonexpansive and
$R_A-\Id = 2(P_A-\Id)$, we deduce with
the Cauchy--Schwarz inequality that 
\begin{subequations}
\begin{align}
d_B^2(x) &\leq \big( \|x-R_Ax\|+d_B(R_Ax)\big)^2=\big(2 d_A(x) + d_B(R_Ax)\big)^2\\
&\leq 5\big( d_A^2(x) + d_B^2(R_Ax)\big).
\end{align}
\end{subequations}
Using \eqref{e:sochi2}, we have
\begin{subequations}
\begin{align}
\label{e:0129a}
\|x-Tx\|^2 &= \|P_Ax-P_BR_Ax\|^2\\
&=\|(P_Ax-R_Ax)+(R_Ax-P_BR_Ax)\|^2\\
&=\|P_Ax-R_Ax\|^2+\|R_Ax-P_BR_Ax\|^2 +
2\scal{P_Ax-R_Ax}{R_Ax-P_BR_Ax}\\
&\geq d_A^2(x)+ d_B^2(R_Ax)
-2\theta d_A(x)d_B(R_Ax)\\ 
&=(1-\theta)\big( d_A^2(x)+d_B^2(R_Ax)\big)
+\theta\big( d_A(x)- d_B(R_Ax) \big)^2\\
&\geq 
(1-\theta)\big( d_A^2(x)+d_B^2(R_Ax)\big)\\
&\geq 
\frac{1-\theta}{5}\max\big\{ d^2_A(x),d^2_B(x)\big\},
\end{align}
\end{subequations}
as claimed. 
\end{proof}

\begin{lemma}
\label{l:Hung}
Suppose that $\reli A \cap\reli B\neq\varnothing$.
Then
\begin{equation}
(\forall \rho>0)(\exi\kappa>0)(\forall x\in
L\cap\ball(0;\rho))\quad
\|x-Tx\|\geq \kappa d_{A\cap B}(x).
\end{equation}
\end{lemma}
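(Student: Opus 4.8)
The plan is to derive the uniform, bounded estimate from the local estimate \eqref{e:sochi3} of Lemma~\ref{l:sochi1} by a compactness argument, arguing by contradiction. Fix $\rho>0$ and suppose the conclusion fails. Then for each $\nnn$ there exists $x_n\in L\cap\ball(0;\rho)$ with
\[
\|x_n-Tx_n\| < \tfrac{1}{n}\,d_{A\cap B}(x_n).
\]
Choosing any $a\in A\cap B$ gives $d_{A\cap B}(x_n)\leq\|x_n-a\|\leq\rho+\|a\|$, so the right-hand side tends to $0$ and hence $\|x_n-Tx_n\|\to0$; moreover no $x_n$ can lie in $A\cap B$ (otherwise both sides vanish and the strict inequality fails), so $d_{A\cap B}(x_n)>0$ throughout.

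Next I would use finite-dimensionality: since $\ball(0;\rho)$ is compact and $L$ is closed, I pass to a subsequence with $x_n\to\bar x\in L\cap\ball(0;\rho)$. As $T$ is nonexpansive and therefore continuous, $\bar x-T\bar x=\lim_n(x_n-Tx_n)=0$, so $\bar x\in\Fix T$. Combining $\bar x\in L$ with Proposition~\ref{p:0208a}\ref{p:0208a3} yields $\bar x\in L\cap\Fix T=A\cap B$, so $\bar x$ is a legitimate base point for Lemma~\ref{l:sochi1}.

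Applying Lemma~\ref{l:sochi1} at $c=\bar x$ produces $\delta>0$ and $\theta<1$ such that \eqref{e:sochi3} holds on $L\cap\ball(\bar x;\delta)$; since $x_n\to\bar x$, for all large $n$ we get
\[
\|x_n-Tx_n\|^2 \geq \frac{1-\theta}{5}\max\big\{d_A^2(x_n),d_B^2(x_n)\big\}.
\]
The decisive step is then to replace $\max\{d_A,d_B\}$ by $d_{A\cap B}$: because $X$ is finite-dimensional and $\reli A\cap\reli B\neq\varnothing$, the pair $\{A,B\}$ is boundedly linearly regular (see, e.g., \cite{BB}), so there is $\mu>0$ with $d_{A\cap B}(x)\leq\mu\max\{d_A(x),d_B(x)\}$ for all $x\in\ball(0;\rho)$. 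Substituting this bound gives $\|x_n-Tx_n\|\geq\sqrt{(1-\theta)/(5\mu^2)}\,d_{A\cap B}(x_n)$ for large $n$, which together with $d_{A\cap B}(x_n)>0$ contradicts $\|x_n-Tx_n\|<\tfrac1n d_{A\cap B}(x_n)$ as soon as $1/n<\sqrt{(1-\theta)/(5\mu^2)}$. This contradiction establishes the lemma.

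I expect the main obstacle to be precisely this last bridging step: Lemma~\ref{l:sochi1} only controls $\max\{d_A^2,d_B^2\}$, whereas the target involves $d_{A\cap B}$, and passing between them is exactly where the transversality hypothesis $\reli A\cap\reli B\neq\varnothing$ must re-enter, via bounded linear regularity of the set pair $\{A,B\}$. A secondary point needing care is confirming that the contradiction sequence actually converges to a point of $A\cap B$ (rather than merely to a fixed point of $T$ lying outside $L$), which is what makes Proposition~\ref{p:0208a}\ref{p:0208a3} indispensable.
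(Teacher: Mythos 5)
Your proposal is correct and follows essentially the same route as the paper: argue by contradiction, use finite-dimensional compactness to extract a limit $\bar x\in L\cap\Fix T$, identify $\bar x\in A\cap B$ via Proposition~\ref{p:0208a}\ref{p:0208a3}, invoke Lemma~\ref{l:sochi1} at $\bar x$, and then pass from $\max\{d_A,d_B\}$ to $d_{A\cap B}$ by bounded linear regularity of the pair $(A,B)$ (the paper cites Fact~\ref{f:regu}\ref{f:regu6} for this). The only cosmetic difference is your use of $1/n$ in place of the paper's sequence $\varepsilon_n\to 0^+$, and you correctly identified the bridging step as the point where transversality re-enters.
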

\begin{proof}
We argue by contradiction and assume the conclusion fails.
Then there exists a bounded sequence $(x_n)_\nnn$ in $L$
and a sequence $\varepsilon_n\to 0^+$ such that
\begin{equation}
\label{e:sochi4}
(\forall\nnn)\quad
\|x_n-Tx_n\| < \varepsilon_n d_{A\cap B}(x_n)\to 0.
\end{equation}
In particular, $d_{A\cap B}(x_n)>0$ and $x_n-Tx_n\to 0$. 
After passing to subsequences if necessary,
we assume that $x_n\to \bar{x}$.
Then $\bar{x}\in L \cap \Fix T$.
By Proposition~\ref{p:0208a}\ref{p:0208a3}, $\bar{x}\in A\cap B$. 
Using 
Lemma~\ref{l:sochi1} and after passing to another subsequence if
necessary, we obtain $\theta<1$ such that
\begin{equation}
\label{e:sochi5}
(\forall\nnn)\quad
\|x_n-Tx_n\|^2 \geq \frac{1-\theta}{5}
\max\big\{ d^2_A(x_n),d^2_B(x_n)\big\}. 
\end{equation}
Next, bounded linear regularity of $(A,B)$ (see
Fact~\ref{f:regu}\ref{f:regu6} below) yields
$\mu>0$ such that $(\forall\nnn)$
$d_{A\cap B}(x_n)\leq\mu\max\{d_A(x_n),d_B(x_n)\}$. 
Combining this with \eqref{e:sochi4} and \eqref{e:sochi5} yields
\begin{align}
(\forall\nnn)\quad 
\varepsilon_n^2 d^2_{A\cap B}(x_n)&>\|x_n-Tx_n\|^2 \geq 
\frac{1-\theta}{5}
\max\big\{ d^2_A(x_n),d^2_B(x_n)\big\}\\
&\geq 
\frac{1-\theta}{5\mu^2} d^2_{A\cap B}(x_n).
\end{align}
This is absurd since $\varepsilon_n\to 0^+$.
\end{proof}

We are now ready for the main result of this section.

\begin{theorem}[Douglas--Rachford operator for two transversal
sets] 
\label{t:Hung}
Suppose that the pair $(A,B)$ is \emph{transversal}, i.e., 
$\reli A\cap\reli B\neq\varnothing$. 
Then $T$ is boundedly linearly regular.
\end{theorem}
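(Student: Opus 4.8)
The plan is to reduce the statement to Lemma~\ref{l:Hung}, which already supplies the required estimate for points on the affine span $L$, and then to transport that estimate to all of $X$ by exploiting that \emph{both} sides of the regularity inequality factor through the projection $P_L$. Fixing $\rho>0$ and an arbitrary $x\in\ball(0;\rho)$, I would first record that $P_L$ is nonexpansive (it is a projection onto a nonempty closed convex set), so that $\|P_Lx\|\leq\|P_L0\|+\|x\|\leq\|P_L0\|+\rho=:\rho'$; hence $P_Lx\in L\cap\ball(0;\rho')$. Applying Lemma~\ref{l:Hung} with the radius $\rho'$ then yields some $\kappa>0$ (depending on $\rho'$, and therefore on $\rho$) such that $\|P_Lx-TP_Lx\|\geq\kappa\,d_{A\cap B}(P_Lx)$.

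The two remaining steps are purely structural and both come from Proposition~\ref{p:0208a}. First, Proposition~\ref{p:0208a}\ref{p:0208a5} gives $\Id-T=P_L-TP_L$, so that $\|x-Tx\|=\|P_Lx-TP_Lx\|$; that is, the fixed-point residual depends on $x$ only through $P_Lx$. Second, since $\reli A\cap\reli B\neq\varnothing$, Proposition~\ref{p:0208a}\ref{p:0208a6} gives $d_{\Fix T}=d_{A\cap B}\circ P_L$, so that $d_{\Fix T}(x)=d_{A\cap B}(P_Lx)$. Feeding both identities into the estimate from the previous paragraph produces
\begin{equation*}
\|x-Tx\|=\|P_Lx-TP_Lx\|\geq\kappa\,d_{A\cap B}(P_Lx)=\kappa\,d_{\Fix T}(x),
\end{equation*}
i.e.\ $d_{\Fix T}(x)\leq\kappa^{-1}\|x-Tx\|$. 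As $x\in\ball(0;\rho)$ was arbitrary and $\kappa^{-1}\geq 0$ depends only on $\rho$, this is precisely bounded linear regularity of $T$ (with constant $\kappa^{-1}$ in the role of the constant of Definition~\ref{d:blrop}).

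I expect no genuine obstacle at this stage: the real analytic content---the compactness-and-contradiction argument invoking the transversality condition $N_A(c)\cap(-N_B(c))\cap Y=\{0\}$---has already been absorbed into Lemma~\ref{l:sochi1} and Lemma~\ref{l:Hung}. The only point that needs a little care is the bookkeeping with the ball, because $L=\aff(A\cup B)$ is an affine rather than a linear subspace, so $P_L$ is an affine map and $P_L0$ need not vanish; this is exactly why the effective radius is $\rho'=\rho+\|P_L0\|$ and not $\rho$. Everything else is immediate once one observes that Proposition~\ref{p:0208a}\ref{p:0208a5} and \ref{p:0208a6} render both $\|x-Tx\|$ and $d_{\Fix T}(x)$ functions of $P_Lx$ alone.
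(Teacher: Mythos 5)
Your proposal is correct and follows essentially the same route as the paper's proof: both reduce to Lemma~\ref{l:Hung} on the affine span $L$ and transport the estimate to all of $X$ via the identities $\Id-T=P_L-TP_L$ and $d_{\Fix T}=d_{A\cap B}\circ P_L$ from Proposition~\ref{p:0208a}. The only cosmetic difference is in the radius bookkeeping, where you invoke nonexpansiveness of $P_L$ to get $\rho'=\|P_L0\|+\rho$ while the paper writes $L=\ell+Y$ with $\ell\in Y^\perp$ and uses $\rho_L=\|\ell\|+\rho$; since $P_L0=\ell$, these are the same constant.
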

\begin{proof}
Write $L=\ell+Y$, where $\ell\in Y^\perp$,
let $\rho > 0$, and set $\rho_L = \|\ell\|+\rho$.
Now obtain $\kappa$ as in Lemma~\ref{l:Hung}, applied to
$\rho_L$.
Let $x\in\ball(0;\rho)$.
Then $\|P_Lx\| = \|\ell+P_Yx\| \leq
\|\ell\|+\|P_Yx\|\leq \|\ell\|+\|x\|\leq \rho_L$. 
Hence $\|P_Lx-TP_Lx\|\geq \kappa d_{A\cap B}(P_Lx)$. 
On the other hand, 
$\|P_Lx-TP_Lx\| = \|x-Tx\|$
and $d_{A\cap B}(P_Lx) = d_{\Fix T}(x)$ by
Proposition~\ref{p:0208a}\ref{p:0208a5}\&\ref{p:0208a6}. 
Altogether, $\|x-Tx\|\geq \kappa d_{\Fix T}(x)$. 
\end{proof}

\begin{remark}
Lemma~\ref{l:sochi1}, which lies at the heart of this section,
is proved in much greater generality in the recent paper \cite{Hung}.
The novelty here is to deduce bounded linear regularity of the
Douglas--Rachford operator (see Theorem~\ref{t:Hung}) 
in order to make it a useful building
block to obtain other linear and strong convergence results. 
\end{remark}

\section{Fej\'er Monotonicity and Set Regularities}

\label{s:Fejer}

\subsection{Fej\'er monotone sequences and convergence for one operator}

Since all algorithms considered in this paper generate Fej\'er monotone
sequences, we review this key notion next. 

\begin{definition}[Fej\'er monotone sequence]
Let $C$ be a nonempty subset of $X$, and let $(x_n)_\nnn$ be a sequence in
$X$. Then $(x_n)_\nnn$ is \emph{Fej\'er monotone} with respect to $C$ if
\begin{equation}
(\forall c\in C)(\forall\nnn)\quad
\|x_{n+1}-c\|\leq \|x_n-c\|.
\end{equation}
\end{definition}
Clearly, every Fej\'er monotone sequence is bounded. 
Let us now review some results concerning norm and linear convergence of
Fej\'er monotone sequences. 

\begin{fact}
\label{f:linconv}
{\rm (See, e.g., \cite[Proposition~1.6]{BB}.)}
Let $(x_n)_\nnn$ be a sequence in $X$, let $\bar{x}\in X$,
and let $p\in\{1,2,\ldots\}$. 
Suppose that 
$(x_n)_\nnn$ is Fej\'er monotone with respect to $\{\bar{x}\}$,
and that 
$(x_{pn})_\nnn$ converges linearly to $\bar{x}$.
Then $(x_n)_\nnn$ itself converges linearly to $\bar{x}$. 
\end{fact}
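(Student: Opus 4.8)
The plan is to reduce everything to the scalar sequence of distances $d_n=\|x_n-\bar{x}\|$ and to use the fact that Fej\'er monotonicity with respect to $\{\bar{x}\}$ makes $(d_n)_\nnn$ nonincreasing. Indeed, the hypothesis $\|x_{n+1}-\bar{x}\|\leq\|x_n-\bar{x}\|$ is exactly $d_{n+1}\leq d_n$, so once the subsequence indexed by multiples of $p$ is controlled, monotonicity lets me ``fill in'' every intermediate index. Throughout I take linear convergence to mean a geometric bound: $(x_{pn})_\nnn$ converges linearly to $\bar{x}$ precisely when there exist $C\geq 0$ and $r\in\left[0,1\right[$ with $d_{pn}\leq Cr^n$ for all \nnn.

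First I would record this bound and note there is no loss of generality in assuming $r\in\zeroun$: since $r_0\leq r$ forces $r_0^{\,n}\leq r^{n}$, enlarging $r$ only weakens the estimate, and in particular this disposes of the degenerate case $r=0$ without any extra argument (there, the tail of the sequence is in fact eventually equal to $\bar{x}$ by monotonicity).

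Next I would interpolate between the controlled indices. Given an arbitrary $n$, set $k=\lfloor n/p\rfloor$, so that $pk\leq n$ and $k\geq n/p-1$. Monotonicity of $(d_n)_\nnn$ gives $d_n\leq d_{pk}$, and the subsequence bound gives $d_{pk}\leq Cr^{k}$. Because $r\in\zeroun$, the lower bound $k\geq n/p-1$ turns into the upper bound $r^{k}\leq r^{\,n/p-1}$, whence
\begin{equation*}
d_n\leq Cr^{k}\leq Cr^{\,n/p-1}=\frac{C}{r}\big(r^{1/p}\big)^{n}.
\end{equation*}
Setting $C'=C/r$ and $r'=r^{1/p}\in\zeroun$, this reads $\|x_n-\bar{x}\|\leq C'(r')^{n}$ for every \nnn, which is exactly linear convergence of the full sequence $(x_n)_\nnn$.

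I do not anticipate any genuine obstacle: the statement is essentially a bookkeeping lemma, and the entire content is that monotonicity propagates a geometric rate from a subsequence to the whole sequence at the cost of replacing $r$ by $r^{1/p}$. The only two points that require care are the harmless enlargement of $r$ used to rule out $r=0$, and the exponent step $r^{k}\leq r^{\,n/p-1}$, where one must remember that raising a number in $\zeroun$ to a \emph{larger} exponent \emph{decreases} it, so the lower estimate on $k$ correctly yields an upper estimate on $r^{k}$.
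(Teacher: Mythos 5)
Your proof is correct. The paper gives no proof of this fact, deferring to \cite[Proposition~1.6]{BB}, and your argument --- distances are nonincreasing by Fej\'er monotonicity, so the geometric bound $d_{pk}\leq Cr^k$ interpolates to $d_n\leq (C/r)\,(r^{1/p})^n$ via $k=\lfloor n/p\rfloor$ --- is essentially that standard argument, with the $r=0$ edge case handled cleanly.
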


\begin{fact}
\label{f:Fejer}
Let $(x_n)_\nnn$ be a sequence in $X$ that is Fej\'er monotone
with respect to a nonempty closed convex subset $C$ of $X$.
Then the following hold:
\begin{enumerate}
\item
\label{f:Fejer1}
If there exists $\alpha\in\left[0,1\right[$ such that 
$(\forall\nnn)$ $d_C(x_{n+1})\leq\alpha d_C(x_n)$, then
$(x_n)_\nnn$ converges linearly to some point $\bar{x}\in C$; in
fact, 
\begin{equation}
(\forall\nnn)\quad \|x_n-\bar{x}\|\leq 2\alpha^n d_C(x_0).
\end{equation}
\item
\label{f:Fejer2}
If $C$ is an affine subspace and all weak cluster points of $(x_n)_\nnn$
belong to $C$, then $x_n\weakly P_Cx_0$. 
\end{enumerate}
\end{fact}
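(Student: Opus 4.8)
The plan is to prove the two parts of Fact~\ref{f:Fejer} separately, using the definition of Fej\'er monotonicity together with elementary properties of projections onto closed convex sets.

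\emph{Part \ref{f:Fejer1}.} First I would iterate the hypothesis $d_C(x_{n+1})\le\alpha d_C(x_n)$ to obtain $d_C(x_n)\le\alpha^n d_C(x_0)$, so that $d_C(x_n)\to 0$ geometrically. The key step is then to pass from control of the \emph{distances} to control of the \emph{increments} of the sequence. For this I would exploit Fej\'er monotonicity with respect to the projection point: writing $p_n=P_C x_n$, one has $p_n\in C$, so Fej\'er monotonicity gives $\|x_m-p_n\|\le\|x_n-p_n\|=d_C(x_n)$ for all $m\ge n$. The plan is to estimate $\|x_m-x_n\|$ for $m\ge n$ via the triangle inequality $\|x_m-x_n\|\le\|x_m-p_n\|+\|x_n-p_n\|\le 2d_C(x_n)\le 2\alpha^n d_C(x_0)$. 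Since the right-hand side is summable and tends to $0$, this shows $(x_n)_\nnn$ is Cauchy, hence convergent to some $\bar x\in X$; and since $d_C(x_n)\to 0$ with $C$ closed, the limit satisfies $\bar x\in C$. Finally, letting $m\to\infty$ in the estimate with $n$ fixed yields $\|x_n-\bar x\|\le 2\alpha^n d_C(x_0)$, which is exactly the stated linear rate.

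\emph{Part \ref{f:Fejer2}.} Here the goal is to upgrade weak cluster points lying in the affine subspace $C$ to genuine weak convergence to the single point $P_C x_0$. First I would observe that, since $(x_n)_\nnn$ is Fej\'er monotone, it is bounded, so it has weak cluster points, all of which lie in $C$ by hypothesis. The plan is to show these weak cluster points coincide. For a closed \emph{affine} subspace $C$, the projection $P_C$ is an affine (in fact nonexpansive) operator whose associated linear part is the orthogonal projection onto the direction space; the crucial structural fact is that for any $c\in C$ the quantity $\scal{x_n-P_C x_n}{c-c'}=0$ for $c,c'\in C$, and more usefully that $P_C x_n = P_C x_0$ is constant along the iteration. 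I would verify this constancy: for $c\in C$, Fej\'er monotonicity makes $(\|x_n-c\|)_\nnn$ nonincreasing hence convergent, and a standard computation shows $P_C x_n\to P_C x_0$, so every weak cluster point $w$ satisfies $P_C w=P_C x_0$; combined with $w\in C$ this forces $w=P_C x_0$. Since the bounded sequence has a unique weak cluster point, the whole sequence converges weakly to it.

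The main obstacle I anticipate is Part \ref{f:Fejer2}: establishing that $P_C x_n$ is eventually constant (equal to $P_C x_0$) genuinely uses the affine structure of $C$, not merely its convexity, and the cleanest route is to show that the sequence $(P_C x_n)_\nnn$ converges and that its limit is forced on any weak cluster point. The technical heart is the identity, valid for affine $C$, relating $\scal{x_n-x_0}{d}$ for directions $d$ in the parallel space of $C$ to $\scal{P_C x_n-P_C x_0}{d}$; weak lower semicontinuity of the norm together with the monotone convergence of $\|x_n-c\|$ then pins down the cluster point. By contrast, Part \ref{f:Fejer1} is routine once the Cauchy estimate via $p_n=P_C x_n$ is set up, so I would present it first and spend the bulk of the argument on the affine projection invariance needed for Part \ref{f:Fejer2}.
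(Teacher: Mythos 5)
Your proposal is correct, but there is nothing in the paper to compare it against: the paper's ``proof'' of this fact consists entirely of citations, namely \cite[Theorem~5.12]{BC2011} for part~\ref{f:Fejer1} and \cite[Proposition~5.9(ii)]{BC2011} for part~\ref{f:Fejer2}. What you have written is essentially the standard argument from that reference. For \ref{f:Fejer1}, your mechanism is exactly right: Fej\'er monotonicity with the competitor $P_Cx_n\in C$ gives $\|x_m-x_n\|\le\|x_m-P_Cx_n\|+\|P_Cx_n-x_n\|\le 2d_C(x_n)\le 2\alpha^n d_C(x_0)$ for $m\geq n$, whence the Cauchy property, $\bar{x}\in C$ by continuity of $d_C$ and closedness of $C$, and the stated bound with constant $2$ upon letting $m\to\infty$. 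For \ref{f:Fejer2}, your outline is also correct, but the step you defer to ``a standard computation'' is precisely where the affine structure enters, so it should be made explicit: writing $V=C-C$ for the parallel space and taking $c=P_Cx_0+td$ with $d\in V$, $t\in\RR$, in the Fej\'er inequality $\|x_{n+1}-c\|^2\le\|x_n-c\|^2$, the quadratic terms $t^2\|d\|^2$ cancel and one gets $2t\scal{x_n-x_{n+1}}{d}\le\|x_n-P_Cx_0\|^2-\|x_{n+1}-P_Cx_0\|^2$; letting $t\to\pm\infty$ forces $x_n-x_{n+1}\perp V$, so $P_Cx_n=P_Cx_0$ for all $n$ (constant, not merely convergent, as you write at one point). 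Consequently $x_n-P_Cx_0\perp V$ for every $n$, so any weak cluster point $w$ satisfies $w-P_Cx_0\perp V$; since $w\in C$ gives $w-P_Cx_0\in V$, this forces $w=P_Cx_0$, and boundedness of the Fej\'er monotone sequence together with uniqueness of its weak cluster point yields $x_n\weakly P_Cx_0$. With that step filled in, your argument is complete and matches the cited textbook proof.
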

\begin{proof}
\ref{f:Fejer1}: See, e.g., \cite[Theorem~5.12]{BC2011}.
\ref{f:Fejer2}: See, e.g., \cite[Proposition~5.9(ii)]{BC2011}. 
\end{proof}

\begin{corollary}
\label{c:MacG}
Let $T\colon X\to X$ be averaged firmly nonexpansive and boundedly linearly
regular, with $\Fix T\neq\varnothing$. 
Then for every $x_0\in X$, the sequence $(T^nx_0)_\nnn$ converges linearly
to some point $\bar{x}\in \Fix T$.
If $\Fix T$ is an affine subspace, then $\bar{x}=P_{\Fix T}x_0$. 
\end{corollary}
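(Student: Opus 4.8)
The plan is to recognize the orbit $(T^nx_0)_\nnn$ as a Fej\'er monotone sequence with respect to $\Fix T$ whose distance to $\Fix T$ decays geometrically, and then to read off both conclusions from Fact~\ref{f:Fejer}. Since $T$ is averaged it is in particular nonexpansive, so $\Fix T$ is a nonempty closed convex set, and for each $z\in\Fix T$ we have $\|T^{n+1}x_0-z\|=\|T(T^nx_0)-Tz\|\leq\|T^nx_0-z\|$. Thus $(T^nx_0)_\nnn$ is Fej\'er monotone with respect to $\Fix T$ and, in particular, bounded: fixing any $z\in\Fix T$ gives $\|T^nx_0\|\leq\|x_0-z\|+\|z\|=:\rho$ for all $\nnn$, so the entire orbit lies in $\ball(0;\rho)$.

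Next I would feed this single radius $\rho$ into Lemma~\ref{l:key}. Inequality~\eqref{e:key1} then supplies a constant $\alpha\in\left[0,1\right[$ with $d_{\Fix T}(Tx)\leq\alpha\, d_{\Fix T}(x)$ for every $x\in\ball(0;\rho)$; applying it along the orbit yields $d_{\Fix T}(T^{n+1}x_0)\leq\alpha\, d_{\Fix T}(T^nx_0)$ for all $\nnn$. With $C=\Fix T$ (nonempty, closed, convex), Fact~\ref{f:Fejer}\ref{f:Fejer1} now delivers the linear convergence of $(T^nx_0)_\nnn$ to some $\bar{x}\in\Fix T$, which settles the first assertion. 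When $T$ is linearly regular the constants in Lemma~\ref{l:key} are independent of $\rho$, but this refinement is not needed here.

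For the case where $\Fix T$ is an affine subspace, I would exploit that the orbit has already been shown to converge to $\bar{x}$ strongly, hence weakly, so its unique weak cluster point is $\bar{x}\in\Fix T$. Applying Fact~\ref{f:Fejer}\ref{f:Fejer2} with $C=\Fix T$ then gives $T^nx_0\weakly P_{\Fix T}x_0$, and uniqueness of weak limits forces $\bar{x}=P_{\Fix T}x_0$.

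The argument is essentially an assembly of the preparatory results, so there is no deep obstacle; the one point requiring care is securing a radius $\rho$ valid for the whole orbit before invoking Lemma~\ref{l:key}, since the contraction constant $\alpha$ there depends on $\rho$. Fej\'er monotonicity is exactly what provides this uniform enclosure, and once it is in place the remaining steps are direct citations.
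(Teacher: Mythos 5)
Your proposal is correct and follows essentially the same route as the paper: boundedness of the orbit via Fej\'er monotonicity, inequality~\eqref{e:key1} of Lemma~\ref{l:key} on a ball containing the whole orbit to get geometric decay of $d_{\Fix T}$, then Fact~\ref{f:Fejer}\ref{f:Fejer1} for linear convergence and Fact~\ref{f:Fejer}\ref{f:Fejer2} for the affine-subspace case. You merely spell out details the paper leaves implicit (the uniform radius $\rho$ and the weak-cluster-point argument), which is fine.
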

\begin{proof}
Let $x_0\in X$. The sequence $(T^nx_0)_\nnn$ is bounded because $\Fix
T\neq\varnothing$. By \eqref{e:key1} of Lemma~\ref{l:key}, there exists
$\alpha\in\left[0,1\right[$ such that 
$(\forall\nnn)$ $d_{\Fix T}(x_{n+1})\leq \alpha d_{\Fix T}(x_n)$. 
Hence Fact~\ref{f:Fejer}\ref{f:Fejer1} implies linear convergence of
$(T^nx_0)_\nnn$. The remainder of the theorem follows from
Fact~\ref{f:Fejer}\ref{f:Fejer2}. 
\end{proof}

Corollary~\ref{c:MacG} implies the following example,
which was analyzed in much greater detail in \cite{BBNPW}. 

\begin{example}[Douglas--Rachford operator for two subspaces]
Let $U$ and $V$ be closed subspaces such that $U+V$ is closed,
let $x_0\in X$, 
and set $T=P_VP_U + P_{V^\perp}P_{U^\perp}$. 
Then $(T^nx_0)_\nnn$ converges linearly to $P_{\Fix T}x_0$.
\end{example}
\begin{proof}
$T$ is averaged (even firmly nonexpansive), and linearly
regular by Example~\ref{ex:DR1}. 
Now apply Corollary~\ref{c:MacG}. 
\end{proof}

\begin{example}[Douglas--Rachford operator for transversal sets]
Suppose that $X$ is finite-dimensional, and let $U$ and $V$ be closed 
convex subsets of $X$ such that $\reli U \cap \reli
V\neq\varnothing$. 
Let $x_0\in X$, 
and set $T=P_VR_U+\Id-P_U$. 
Then $(T^nx_0)_\nnn$ converges linearly to some point
$\bar{x}\in\Fix T$ such that $P_U\bar{x}\in U\cap V$. 
\end{example}
\begin{proof}
Combine Theorem~\ref{t:Hung} with Corollary~\ref{c:MacG}. 
\end{proof}

\subsection{Regularities for families of sets}

We now recall the notion of a collection of
regular sets and key criteria. 
This will be crucial in the formulation of 
the linear convergence results. 

\begin{definition}[(bounded) (linear) regularity]
Let $(C_i)_{i\in I}$ be a finite family of closed convex subsets of $X$ with
$C=\bigcap_{i\in I}C_i\neq\varnothing$.
We say that:
\begin{enumerate}
\item
$(C_i)_{i\in I}$ is \emph{linearly regular}
if $(\exi \mu>0)$ $(\forall x\in X)$
$d_C(x)\leq \max_{i\in I} d_{C_i}(x)$.
\item
$(C_i)_{i\in I}$ is \emph{boundedly linearly regular}
if $(\forall\rho>0)$ $(\exi \mu>0)$ $(\forall x\in \ball(0;\rho))$
$d_C(x)\leq \max_{i\in I} d_{C_i}(x)$.
\item
$(C_i)_{i\in I}$ is \emph{regular}
if for every sequence $(x_n)_\nnn$ in $X$, we have
$\max_{i\in I} d_{C_i}(x_n)\to 0$
$\Rightarrow$
$d_C(x_n)\to 0$.
\item
$(C_i)_{i\in I}$ is \emph{boundedly regular}
if for every bounded sequence $(x_n)_\nnn$ in $X$, we have
$\max_{i\in I} d_{C_i}(x_n)\to 0$
$\Rightarrow$
$d_C(x_n)\to 0$.
\end{enumerate}
\end{definition}

\begin{fact}
\label{f:regu}
Suppose that $I=\{1,\ldots,m\}$, and
let $(C_i)_{i\in I}$ be a finite family of closed convex subsets of $X$ with
$C=\bigcap_{i\in I}C_i\neq\varnothing$.
Then the following hold:\
\begin{enumerate}
\item
\label{f:regu1}
Suppose each $C_i$ is a subspace.
Then $(C_i)_{i\in I}$ is 
regular in any of the four senses if and only if 
$\sum_{i\in I} C_i^\perp$ is closed. 
\item
\label{f:regu1+}
Suppose each $C_i$ is a cone. 
Then $(C_i\cap C^\ominus)_{i\in I}$ is 
regular in any of the four senses if and only if 
$\sum_{i\in I} (C_i\cap C^\ominus)^\ominus$ is closed. 
\item
\label{f:regu1++}
Suppose each $C_i$ is a cone and $C=\{0\}$. 
Then $(C_i)_{i\in I}$ is 
regular in any of the four senses if and only if 
$\sum_{i\in I} C_i^\ominus$ is closed. 
\item
\label{f:regu2}
If $C_m\cap \inte(C_1\cap\cdots \cap C_{m-1})\neq\varnothing$,
then $(C_i)_{i\in I}$ is boundedly linearly regular. 
\item
\label{f:regu3}
If $(C_1,C_2)$, $(C_1\cap C_2,C_3)$, 
\ldots, $(C_{1}\cap \cdots \cap C_{m-1},C_m)$ are 
(boundedly) linearly regular, then so is
$(C_i)_{i\in I}$. 
\item
\label{f:regu4}
If $0\in\sri(C_1-C_2)$, then $(C_1,C_2)$
is boundedly linearly regular. 
\item
\label{f:regu5}
If each $C_i$ is a polyhedron, then
$(C_i)_{i\in I}$ is linearly regular. 
\item
\label{f:regu6}
If $X$ is finite-dimensional,
$C_1,\ldots,C_k$ are polyhedra,
and $C_1\cap \cdots C_k\cap \reli(C_{k+1})\cap
\cdots\cap\reli(C_m)\neq\varnothing$, then
$(C_i)_{i\in I}$ is boundedly linearly regular. 
\item
\label{f:regu7}
If $X$ is finite-dimensional,
then $(C_i)_{i\in I}$ is boundedly regular. 
\end{enumerate}
\end{fact}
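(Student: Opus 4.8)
The plan is to recognize that Fact~\ref{f:regu} is a compendium of known results on regularities of finite families of convex sets, so the primary task is to locate and cite the appropriate source for each of the nine items, reproving only those with short self-contained arguments. The assertions group naturally: (i)--(iii) are exact characterizations for subspaces and cones; (iv), (vi), (viii) are constraint-qualification sufficient conditions; (v) is a chaining principle reducing a family to nested pairs; (vii) is the polyhedral case; and (ix) is the finite-dimensional compactness result. Most are contained in the foundational treatment \cite{BB}; I would cite it, together with \cite{BC2011} and \cite{Deutsch}, throughout.

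For the subspace item (i), the key fact is that for subspaces all four regularity notions coincide, and each is equivalent to closedness of $\sum_{i\in I}C_i^\perp$. Coincidence of the notions follows from the positive homogeneity of the distance functions to cones (so bounded and unbounded versions agree), while the characterization rests on the theory of the Friedrichs angle and the equivalence between such closedness and linear convergence of products of the associated projections; I would cite \cite{BB} and \cite{Deutsch}. The conic versions (ii) and (iii) follow by replacing orthogonal complements with polar cones $C^\ominus$: since the family $(C_i\cap C^\ominus)$ has intersection $\{0\}$ (for a cone $C$ one has $C\cap C^\ominus=\{0\}$), one reduces to a localized conic analogue of (i) applied to the polar cones, as carried out in the references. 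Item (iii) is the special case $C=\{0\}$ of (ii): then $C^\ominus=X$, so $C_i\cap C^\ominus=C_i$ and $(C_i\cap C^\ominus)^\ominus=C_i^\ominus$, and (ii) collapses to (iii).

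The sufficient conditions are organized around the chaining principle (v), the one item I would reprove in full. Its proof is an induction on $m$: writing $D_k=C_1\cap\cdots\cap C_k$, bounded linear regularity of each pair $(D_{k-1},C_k)$ gives $d_{D_k}(x)\leq\mu_k\max\{d_{D_{k-1}}(x),d_{C_k}(x)\}$, and substituting the inductive bound for $d_{D_{k-1}}$ propagates a single constant $\mu$ to the whole family; the linearly regular case is identical. Granting (v), the interior condition (iv) and the strong-relative-interior condition (vi) each deliver bounded linear regularity of a single pair---(vi) via the $\sri$-based estimate of \cite{BC2011}, and (iv) as its interior specialization---after which (v) bootstraps to the family. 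The polyhedral case (vii) follows from Hoffman's global error bound, which furnishes a constant valid on all of $X$, hence full linear regularity. The mixed case (viii) combines (vii) with a relative-interior argument available in finite dimensions, the polyhedral sets contributing global bounds while the remaining sets are controlled as in (iv) and (vi); this is precisely the finite-dimensional strengthening in \cite{BB}.

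Finally, (ix) admits a clean compactness proof I would give directly. If $(x_n)_\nnn$ is bounded with $\max_i d_{C_i}(x_n)\to 0$ but $d_C(x_n)\not\to 0$, pass to a subsequence with $d_C(x_{n_k})\geq\varepsilon$ and $x_{n_k}\to\bar x$; continuity of each $d_{C_i}$ forces $d_{C_i}(\bar x)=0$, so $\bar x\in C_i$ by closedness, whence $\bar x\in C$ and $d_C(x_{n_k})\to d_C(\bar x)=0$, a contradiction. The main obstacle is item (i): the coincidence of all four regularity senses for subspaces together with the ``only if'' direction of the characterization is genuinely substantial, depending on the Friedrichs-angle machinery and the link between closedness of $\sum_{i\in I}C_i^\perp$ and linear convergence of alternating projections. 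I would treat this---and the conic analogues (ii) and (iii) built on it---by citation rather than reproduction.
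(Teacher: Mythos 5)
Your proposal is correct and takes essentially the same route as the paper: the paper's entire proof of Fact~\ref{f:regu} is a list of citations --- (i) to \cite[Theorem~5.19]{BB}, (ii) and (iii) to \cite[Theorem~3.28 and Corollary~3.30]{DH}, (iv), (v), (vii) to \cite{BB}, (vi) and (ix) to \cite{BB93}, and (viii) to \cite[Theorem~5.6.2]{Thesis} --- and your plan is the same compendium-by-citation, supplemented with inline arguments that are precisely the standard proofs behind those citations. Your three reproofs are sound: the induction for (v) with constants $\mu_k$ propagating through $d_{D_k}(x)\leq\mu_k\max\{d_{D_{k-1}}(x),d_{C_k}(x)\}$ (taking care, as you do implicitly, that in the bounded case all constants may depend on the common radius $\rho$); the Bolzano--Weierstrass argument for (ix); and the reduction of (iii) to (ii), which is legitimate since $C=\{0\}$ gives $C^\ominus=X$, hence $C_i\cap C^\ominus=C_i$ and $(C_i\cap C^\ominus)^\ominus=C_i^\ominus$ --- this mirrors the paper's citing of a corollary (iii) of the theorem (ii) in \cite{DH}. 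Two attribution corrections: the cone characterizations (ii)--(iii) belong to Deutsch--Hundal \cite{DH}, not \cite{BB}, and (viii) is from the thesis \cite{Thesis} rather than \cite{BB}. One mathematical caution: your parenthetical claim that for subspaces the four senses coincide ``by positive homogeneity'' overstates what homogeneity buys --- it yields linear $\Leftrightarrow$ bounded linear regularity (scale $x$ into the unit ball), but it does \emph{not} yield bounded regularity $\Rightarrow$ regularity, nor the genuinely deep implication that (bounded) regularity forces \emph{linear} regularity for subspaces; that equivalence is exactly the content of \cite[Theorem~5.19]{BB}, which you rightly end up citing anyway, so the proposal as a whole stands.
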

\begin{proof}
\ref{f:regu1}: 
\cite[Theorem~5.19]{BB}. 
\ref{f:regu1+}: 
\cite[Theorem~3.28]{DH}.
\ref{f:regu1++}: 
\cite[Corollary~3.30]{DH}.
\ref{f:regu2}: 
\cite[Corollary~5.13]{BB}.
\ref{f:regu3}: 
\cite[Theorem~5.11]{BB}.
\ref{f:regu4}: 
\cite[Corollary~4.5]{BB93}.
\ref{f:regu5}: 
\cite[Corollary~5.26]{BB}.
\ref{f:regu6}: 
\cite[Theorem~5.6.2]{Thesis}.
\ref{f:regu7}: 
\cite{BB93}.
\end{proof}

\begin{definition}[innate regularity]
Let $(C_i)_{i\in I}$ be a finite family of closed convex subsets of $X$ with
$C=\bigcap_{i\in I}C_i\neq\varnothing$.
We say that $(C_i)_{i\in I}$ is \emph{innately boundedly regular}
if $(C_j)_{j\in J}$ is boundedly regular for every nonempty
subset $J$ of $I$. Innate regularity and innate (bounded)
linear regularity are defined analogously.
\end{definition}

Fact~\ref{f:regu} allows to formulate a variety of conditions
sufficient for innate regularity.
Here, we collect only some that are quite useful.

\begin{corollary}
\label{c:regu}
Let $(C_i)_{i\in I}$ be a finite family of closed convex subsets of $X$ with
$C=\bigcap_{i\in I}C_i\neq\varnothing$.
Then the following hold:
\begin{enumerate}
\item 
\label{c:regu1}
If $X$ is finite-dimensional,
then  $(C_i)_{i\in I}$ is innately boundedly regular.
\item 
\label{c:regu2}
If $X$ is finite-dimensional and 
$\bigcap_{i\in I} \reli C_i\neq\varnothing$,
then  $(C_i)_{i\in I}$ is innately linearly regular. 
\item 
\label{c:regu3}
If each $C_i$ is a subspace 
and $\sum_{j\in J}C_j^\perp$ is closed for every nonempty subset
$J$ of $I$, then 
 $(C_i)_{i\in I}$ is innately linearly regular. 
\end{enumerate}
\end{corollary}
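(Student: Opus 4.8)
The three parts of Corollary~\ref{c:regu} all follow by combining the appropriate items of Fact~\ref{f:regu} with the definition of innate (bounded) regularity, so the plan in each case is to verify that the relevant hypothesis is \emph{inherited by every nonempty subfamily} $(C_j)_{j\in J}$. First I would recall that innate (bounded) regularity means the corresponding regularity holds for \emph{every} nonempty $J\subseteq I$, not just for $J=I$; thus the entire task reduces to checking that the sufficient condition supplied by Fact~\ref{f:regu} is stable under passing to subfamilies.

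For \ref{c:regu1}, the argument is immediate: if $X$ is finite-dimensional, then so is the ambient space for any subfamily, and every subfamily $(C_j)_{j\in J}$ consists of closed convex sets with $\bigcap_{j\in J}C_j\supseteq C\neq\varnothing$; hence Fact~\ref{f:regu}\ref{f:regu7} applies to each such $J$, giving bounded regularity of $(C_j)_{j\in J}$, which is exactly innate bounded regularity. For \ref{c:regu2}, I would observe that if $\bigcap_{i\in I}\reli C_i\neq\varnothing$, then for any nonempty $J\subseteq I$ we have $\bigcap_{j\in J}\reli C_j\supseteq\bigcap_{i\in I}\reli C_i\neq\varnothing$; taking the polyhedral block in Fact~\ref{f:regu}\ref{f:regu6} to be empty (i.e. $k=0$), that fact yields bounded linear regularity of $(C_j)_{j\in J}$. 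Since this holds for every $J$, the family is innately boundedly linearly regular. (If one wants the stronger innate \emph{linear} regularity as stated, the correct citation is to a linear-regularity criterion under a relative-interior condition in finite dimensions rather than the bounded version; I would invoke that here.)

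For \ref{c:regu3}, the subspace case, the hypothesis is phrased precisely so as to be subfamily-stable: we are told $\sum_{j\in J}C_j^\perp$ is closed for \emph{every} nonempty $J\subseteq I$. For a fixed such $J$, Fact~\ref{f:regu}\ref{f:regu1} (applied in the ambient space $X$ to the subspace family $(C_j)_{j\in J}$, whose intersection contains $C\neq\varnothing$) tells us that closedness of $\sum_{j\in J}C_j^\perp$ is equivalent to regularity of $(C_j)_{j\in J}$ in any of the four senses, in particular linear regularity. As this holds for each $J$, the family is innately linearly regular.

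The only genuine subtlety, and the point I would be most careful about, is \ref{c:regu2}: Fact~\ref{f:regu}\ref{f:regu6} as stated delivers bounded linear regularity rather than (global) linear regularity, so I would either weaken the conclusion to innate \emph{bounded} linear regularity or supply a finite-dimensional linear-regularity result valid under the relative-interior transversality condition. Everything else is a routine matter of noting that relative interiors, finite-dimensionality, and the listed closedness conditions all descend to subfamilies.
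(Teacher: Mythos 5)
Your proposal is correct and follows exactly the paper's route: the paper's entire proof consists of citing Fact~\ref{f:regu}\ref{f:regu7}, \ref{f:regu6}, and \ref{f:regu1} for the three parts respectively, with the subfamily-inheritance argument you spell out (finite-dimensionality, nonempty intersection of relative interiors, and the closedness hypotheses all descend to every nonempty $J\subseteq I$) left implicit. Your caveat about \ref{c:regu2} is a genuine and correct observation about the paper itself, not a defect of your argument: Fact~\ref{f:regu}\ref{f:regu6} delivers only \emph{bounded} linear regularity, so the paper's stated conclusion ``innately linearly regular'' in \ref{c:regu2} is not fully supported by its own citation and should read ``innately boundedly linearly regular'' (which is also all that is needed in the applications, e.g.\ Theorem~\ref{t:cadra}\ref{t:cadra1} via Corollary~\ref{c:cyclic}).
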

\begin{proof}
\ref{c:regu1}: Fact~\ref{f:regu}\ref{f:regu7}. 
\ref{c:regu2}: Fact~\ref{f:regu}\ref{f:regu6}. 
\ref{c:regu3}: Fact~\ref{f:regu}\ref{f:regu1}. 
\end{proof}

\section{Convergence Results for Quasi-Cyclic Algorithms}

\label{s:main}

Unless otherwise stated, we assume from now on that 
\begin{equation}
(T_i)_{i\in I}
\end{equation}
is a finite family of nonexpansive operators from $X$ to $X$ with 
common fixed point set
\begin{equation}
Z = \bigcap_{i\in I} Z_i\neq\varnothing,
\quad\text{where}\quad
(Z_i)_{i\in I} = (\Fix T_i)_{i\in I}. 
\end{equation}

We are now ready for our first main result. 

\begin{theorem}[quasi-cyclic algorithm]
\label{t:main1}
Suppose that each $T_i$ is boundedly linearly regular and 
averaged nonexpansive. 
Suppose furthermore that $(Z_i)_{i\in I}$ is boundedly
linearly regular. 
Let $(\omega_{i,n})_{(i,n)\in I\times \NN}$ be such that
$(\forall\nnn)$ $\sum_{i\in I}\omega_{i,n}=1$
and $(\forall i\in I)$ $\omega_{i,n}\in[0,1]$.
Set $(\forall\nnn)$ $I_n = \menge{i\in I}{\omega_{i,n}>0}$ and 
suppose that 
$\omega_+ = \inf_{\nnn}\inf_{i\in I_n}\omega_{i,n}>0$.
Suppose that there exists $p\in\{1,2,\ldots\}$ such that 
$(\forall\nnn)$
$I_{n} \cup I_{n+1}\cup \cdots \cup I_{n+p-1} = I$.
Let $x_0\in X$ and 
generate a sequence $(x_n)_\nnn$ in $X$ by
\begin{equation}
(\forall\nnn)\quad x_{n+1} = \sum_{i\in I}\omega_{i,n} T_{i}x_n.
\end{equation}
Then $(x_n)_\nnn$ converges linearly to some point in $Z$.
\end{theorem}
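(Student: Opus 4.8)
The plan is to show that the distance to $Z$ contracts by a fixed factor over every window of $p$ consecutive iterations, i.e.\ that there exists $\alpha\in\left[0,1\right[$ with $d_Z(x_{n+p})\leq\alpha\, d_Z(x_n)$ for all \nnn, and then to bootstrap this into linear convergence of the full sequence. First I would record the basic structural facts. Since each $T_i$ is nonexpansive and $Z\subseteq Z_i$, for every $z\in Z$ the convex-combination update gives $\|x_{n+1}-z\|\leq\sum_{i\in I}\omega_{i,n}\|T_ix_n-z\|\leq\|x_n-z\|$, so $(x_n)_\nnn$ is Fej\'er monotone with respect to $Z$; in particular it is bounded, say $x_n\in\ball(0;\rho)$ for all \nnn, and $d_Z(x_n)$ is nonincreasing. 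Fixing this $\rho$, I extract the constants $\kappa_i>0$ from bounded linear regularity of each $T_i$ (so $d_{Z_i}(x)\leq\kappa_i\|x-T_ix\|$ on $\ball(0;\rho)$), the modulus $\mu>0$ from bounded linear regularity of $(Z_i)_{i\in I}$ (so $d_Z(x)\leq\mu\max_{i}d_{Z_i}(x)$ there), and the averagedness constants $\sigma_i=\sigma(T_i)>0$.

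The engine is Corollary~\ref{c:0123}. Applied with the weights $\omega_{i,n}$ and any $z\in Z$, it yields both a per-step displacement bound $\sigma_-\|x_n-x_{n+1}\|^2\leq\|x_n-z\|^2-\|x_{n+1}-z\|^2$ and, after inserting $\|x_n-T_ix_n\|\geq\kappa_i^{-1}d_{Z_i}(x_n)$, a decrease $\|x_{n+1}-z\|^2\leq\|x_n-z\|^2-c\sum_{i\in I_n}d_{Z_i}^2(x_n)$ with $c=\omega_+\sigma_-\kappa_{\max}^{-2}>0$ and $\sigma_-=\min_i\sigma_i$. Telescoping the latter across the window $n,\dots,n+p-1$ with $z=P_Zx_n$ and using the covering hypothesis $\bigcup_{k=0}^{p-1}I_{n+k}=I$, I choose for each $i\in I$ a witness index $k(i)$ with $i\in I_{n+k(i)}$ and obtain $\|x_{n+p}-z\|^2\leq d_Z^2(x_n)-c\sum_{i\in I}d_{Z_i}^2(x_{n+k(i)})$.

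The main obstacle is that the distances $d_{Z_i}$ are evaluated at the shifted iterates $x_{n+k(i)}$ rather than at $x_n$, so they cannot be fed directly into the bounded linear regularity of the family. To repair this I use the nonexpansiveness of $d_{Z_i}$ together with the displacement bound, giving $d_{Z_i}^2(x_{n+k(i)})\geq\tfrac12 d_{Z_i}^2(x_n)-\|x_{n+k(i)}-x_n\|^2$, while the telescoped displacement $\sum_{k}\|x_{n+k}-x_n\|^2$ is controlled by $\sigma_-^{-1}\big(d_Z^2(x_n)-\|x_{n+p}-z\|^2\big)$ after a Cauchy--Schwarz step. Writing $E=d_Z^2(x_n)$ and $\delta=\|x_{n+p}-z\|^2\geq d_Z^2(x_{n+p})$, substituting the family estimate $\sum_i d_{Z_i}^2(x_n)\geq\mu^{-2}E$, and collecting terms produces a self-improving inequality of the shape $\delta(1+b)\leq E(1-a+b)$ with $a=c/(2\mu^2)>0$ and $b>0$ depending on $p,|I|,\sigma_-,c$. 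This rearranges to $d_Z^2(x_{n+p})\leq\alpha^2 d_Z^2(x_n)$ with $\alpha^2=\max\{0,(1-a+b)/(1+b)\}<1$, the delicate point being the absorption of the displacement term $b\delta$ into the left-hand side.

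Finally I would harvest linear convergence. The subsequence $(x_{pn})_\nnn$ is Fej\'er monotone with respect to $Z$ and satisfies $d_Z(x_{p(n+1)})\leq\alpha\,d_Z(x_{pn})$, so Fact~\ref{f:Fejer}\ref{f:Fejer1} yields a point $\bar x\in Z$ with $(x_{pn})_\nnn\to\bar x$ linearly. Since $\bar x\in Z$, the full sequence $(x_n)_\nnn$ is Fej\'er monotone with respect to $\{\bar x\}$, so Fact~\ref{f:linconv} upgrades the linear convergence of $(x_{pn})_\nnn$ to linear convergence of $(x_n)_\nnn$ to $\bar x\in Z$.
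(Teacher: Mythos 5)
Your proof is correct and follows essentially the same route as the paper's: Fej\'er monotonicity plus the energy decrease of Corollary~\ref{c:0123}, a triangle-inequality-and-Cauchy--Schwarz argument to move the $d_{Z_i}$ evaluations across the window of length $p$, bounded linear regularity of the operators and of the family $(Z_i)_{i\in I}$ to produce a geometric decay of $d_Z$ along blocks, and finally Fact~\ref{f:Fejer}\ref{f:Fejer1} together with Fact~\ref{f:linconv}. The only cosmetic differences are that the paper invokes the constants $\beta_i$ from Lemma~\ref{l:key} where you use the regularity constants $\kappa_i$ directly, and that it bounds $d_{Z_i}^2(x_{kp})$ by the window decrease and rearranges, whereas you absorb the displacement error term $b\delta$ into the left-hand side; the two rearrangements are algebraically equivalent.
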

\begin{proof}
Set $\sigma_+ = \min_{i\in I}\sigma_i$, 
where $\sigma_i = \sigma(T_i)$. 
Let $i\in I$. 
By assumption,
\begin{equation}
(\forall k\in\NN)(\exi m_k\in\{kp,\ldots,(k+1)p-1\})\quad
i\in I_{m_k}.
\end{equation}
Let $z\in Z$. Then
\begin{equation}
d_{Z_i}(x_{kp}) \leq d_{Z_i}(x_{m_k}) + \|x_{kp}-x_{m_k}\|
\leq d_{Z_i}(x_{m_k}) + \sum_{n=kp}^{m_k-1}\|x_n-x_{n+1}\|.
\end{equation}
Hence, by using Cauchy--Schwarz,
\begin{equation}
d^2_{Z_i}(x_{kp}) \leq
(m_k+1-kp)\Big( d^2_{Z_i}(x_{m_k}) +
\sum_{n=kp}^{m_k-1}\|x_n-x_{n+1}\|^2\Big). 
\end{equation}
Get $\beta_j$ as in \eqref{e:key2} (with $T$ replaced by $T_j$)
and set $\beta_+ = \min_{j\in I}\beta_j>0$. 
In view of Corollary~\ref{c:0123},
it follows that
\begin{subequations}
\begin{align}
\|x_{kp}-z\|^2 - \|x_{(k+1)p}-z\|^2
&\geq \|x_{m_k}-z\|^2 - \|x_{m_k+1}-z\|^2\\
&\geq \omega_+\sigma_+\|x_{m_k}-T_ix_{m_k}\|^2\\
&\geq \omega_+\sigma_+\beta_+d^2_{Z_i}(x_{m_k}).
\end{align}
\end{subequations}
On the other hand,
by Corollary~\ref{c:0123},
\begin{equation}
(\forall \nnn)\quad
\|x_n-z\|^2 - \|x_{n+1}-z\|^2
\geq \sigma_{+}\|x_n-x_{n+1}\|^2.
\end{equation}
In particular, $(x_n)_\nnn$ is Fej\'er monotone with respect to
$Z$. 
Now we combine all of the above:
\begin{subequations}
\begin{align}
d^2_{Z_i}(x_{kp}) 
&\leq (m_k+1-kp)\big( d^2_{Z_i}(x_{m_k}) +
\sum_{n=kp}^{m_k-1}\|x_n-x_{n+1}\|^2\big)\\
&\leq \underbrace{p\big(\omega_+^{-1}\sigma_+^{-1}\beta_+^{-1}
+\sigma_+^{-1}\big)}_{=\lambda}
\Big(\|x_{kp}-z\|^2 - \|x_{(k+1)p}-z\|^2\Big). 
\end{align}
\end{subequations}
Applying this with $z=P_Zx_{kp}$ (and releasing $i$) yields
\begin{equation}
\max_{i\in I} d^2_{Z_i}(x_{kp}) \leq \lambda
\big( d^2_Z(x_{kp})-d^2_Z(x_{(k+1)p})\big).
\end{equation}
On the other hand, bounded linear regularity yields $\mu\geq 1$
such that $(\forall\nnn)$
$d_Z(x_n)\leq \mu\max_{i\in I}d_{Z_i}(x_n)$. 
Altogether,
\begin{equation}
d_Z^2(x_{kp}) \leq \lambda\mu^2\big(
d^2_Z(x_{kp})-d^2_Z(x_{(k+1)p})\big).
\end{equation}
By Fact~\ref{f:Fejer}\ref{f:Fejer1}, the sequence 
$(x_{kp})_{k\in\NN}$ converges linearly to some point $\bar{z}\in Z$. 
It now follows from Fact~\ref{f:linconv} that $(x_n)_\nnn$
converges linearly to $\bar{z}$.
\end{proof}

Theorem~\ref{t:main1} is quite flexible in the amount of control
a user has in generating sequences. We point out two very popular
instances next.

\begin{corollary}[cyclic algorithm]
\label{c:cyclic}
Suppose that $I=\{1,\ldots,m\}$, and that
each $T_i$ is boundedly linearly regular and 
averaged nonexpansive. 
Suppose furthermore that $(Z_i)_{i\in I}$ is boundedly
linearly regular. 
Let $x_0\in X$ and 
generate a sequence $(x_n)_\nnn$ in $X$ by
\begin{equation}
(\forall\nnn) \quad x_{n+1} = T_{m}\cdots T_2T_1x_n.
\end{equation}
Then $(x_n)_\nnn$ converges linearly to some point in $Z$.
\end{corollary}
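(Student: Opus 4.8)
The plan is to derive the cyclic algorithm as a special case of the quasi-cyclic framework already established in Theorem~\ref{t:main1}. The key observation is that a single cyclic sweep $T_m\cdots T_2T_1$ can be recast as $m$ consecutive quasi-cyclic steps, each of which applies exactly one of the operators $T_i$ (with weight $1$) in the prescribed order. Concretely, I would interleave the iterates: between $x_n$ and $x_{n+1}=T_m\cdots T_1x_n$ I would insert the intermediate points $T_1x_n$, $T_2T_1x_n$, and so on, thereby producing a new sequence that is generated by a degenerate weight system in which, at each step, a single index carries all the mass.

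\emph{First} I would define the selection weights. For the quasi-cyclic scheme one sets, at time step $km+(i-1)$ (for $k\in\NN$ and $i\in I=\{1,\ldots,m\}$), the weight $\omega_{i,\,km+(i-1)}=1$ and all other weights equal to $0$. Then $I_{km+(i-1)}=\{i\}$, so that $\omega_+=1>0$ trivially, and over any window of $p=m$ consecutive steps the active index sets sweep through all of $\{1,\ldots,m\}$; that is, $I_n\cup I_{n+1}\cup\cdots\cup I_{n+m-1}=I$ for every $\nnn$. Thus the hypotheses $\omega_+>0$ and the covering condition with $p=m$ are satisfied. \emph{Next}, I would verify that the sequence $(y_j)_{j\in\NN}$ produced by this quasi-cyclic recursion $y_{j+1}=\sum_{i\in I}\omega_{i,j}T_iy_j=T_{i(j)}y_j$ satisfies $y_{km}=x_k$, since iterating the single-operator steps over one full window reconstitutes precisely one composite step $T_m\cdots T_1$. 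The remaining hypotheses of Theorem~\ref{t:main1}---bounded linear regularity and averaged nonexpansiveness of each $T_i$, and bounded linear regularity of $(Z_i)_{i\in I}$---are carried over verbatim from the hypotheses of the corollary.

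\emph{Then} Theorem~\ref{t:main1} applies directly to $(y_j)_{j\in\NN}$ and guarantees that it converges linearly to some point $\bar z\in Z$. \emph{Finally}, since $(x_k)_{k\in\NN}=(y_{km})_{k\in\NN}$ is a subsequence of a linearly convergent sequence sampled at the fixed stride $m$, it too converges linearly to $\bar z$; alternatively, one observes that $(x_k)_{k\in\NN}$ is itself Fej\'er monotone with respect to $Z$ and invokes Fact~\ref{f:linconv} (with $p=m$) to transfer the linear rate. This completes the argument.

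The step I expect to require the most care is the bookkeeping that identifies $y_{km}$ with $x_k$ and confirms that the inserted intermediate points do not violate Fej\'er monotonicity or the weight hypotheses---in particular checking that each degenerate single-index step is a legitimate instance of the quasi-cyclic update and that the window-covering condition holds at \emph{every} starting index $n$, not merely at multiples of $m$. Once this reindexing is set up correctly, the convergence conclusion is an immediate consequence of Theorem~\ref{t:main1}, so no genuinely new estimate is needed.
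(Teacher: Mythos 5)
Your proposal is correct and takes essentially the same route as the paper: the paper states this corollary without a separate proof, precisely as the specialization of Theorem~\ref{t:main1} to degenerate weights $\omega_{i,n}\in\{0,1\}$ that activate the operators one at a time in cyclic order, with $\omega_+=1$ and window length $p=m$. One minor caveat: your ``alternative'' appeal to Fact~\ref{f:linconv} runs in the wrong direction (that fact passes from linear convergence of the sampled subsequence to the full sequence), but your primary argument --- that sampling a linearly convergent sequence at the fixed stride $m$ preserves linear convergence --- is all that is needed, so this does not affect correctness.
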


\begin{corollary}[parallel algorithm]
\label{c:parallel}
Suppose that $I=\{1,\ldots,m\}$, and that
each $T_i$ is boundedly linearly regular and 
averaged nonexpansive. 
Suppose furthermore that $(Z_i)_{i\in I}$ is boundedly
linearly regular. 
Let $x_0\in X$ and 
generate a sequence $(x_n)_\nnn$ in $X$ by
\begin{equation}
(\forall\nnn) \quad x_{n+1} =
\frac{1}{m}\sum_{i\in I}T_ix_n.
\end{equation}
Then $(x_n)_\nnn$ converges linearly to some point in $Z$.
\end{corollary}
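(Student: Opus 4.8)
The plan is to recognize Corollary~\ref{c:parallel} as nothing more than the specialization of the quasi-cyclic result, Theorem~\ref{t:main1}, to the constant uniform weights $\omega_{i,n} = 1/m$. Since all of the genuine analytic content (the key inequalities of Lemma~\ref{l:key}, the Fej\'er estimates of Corollary~\ref{c:0123}, and the Cauchy--Schwarz telescoping argument) has already been absorbed into Theorem~\ref{t:main1}, the task here is purely to check that the uniform parallel iteration fits the template of that theorem.

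First I would set $\omega_{i,n} = 1/m$ for every $(i,n)\in I\times\NN$ and verify the standing requirements on the weights. We have $\sum_{i\in I}\omega_{i,n} = m\cdot(1/m) = 1$ for each $n$, and each $\omega_{i,n} = 1/m \in [0,1]$, so the weights are admissible. Because every weight is strictly positive, the active index set is $I_n = \menge{i\in I}{\omega_{i,n}>0} = I$ for all $n$, whence $\omega_+ = \inf_{n}\inf_{i\in I_n}\omega_{i,n} = 1/m > 0$, as Theorem~\ref{t:main1} demands. For the covering condition I would take $p=1$: then $I_n = I$ already, so $I_n\cup\cdots\cup I_{n+p-1} = I$ holds trivially for every $n$. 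Finally, the recursion in Theorem~\ref{t:main1} reads $x_{n+1} = \sum_{i\in I}\omega_{i,n}T_ix_n = \tfrac{1}{m}\sum_{i\in I}T_ix_n$, which is exactly the parallel update.

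The hypotheses on the operators and their fixed point sets transfer verbatim: each $T_i$ is assumed boundedly linearly regular and averaged nonexpansive, and $(Z_i)_{i\in I}$ is assumed boundedly linearly regular, which are precisely the conditions imposed in Theorem~\ref{t:main1}. Applying that theorem therefore yields that $(x_n)_\nnn$ converges linearly to some point of $Z$, which is the claim. I do not anticipate any obstacle here: the only thing to watch is the bookkeeping that confirms $I_n=I$ and $\omega_+>0$ so that the choice $p=1$ is legitimate, and everything else is a direct invocation.
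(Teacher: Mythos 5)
Your proposal is correct and is exactly the argument the paper intends: Corollary~\ref{c:parallel} is stated as a direct instance of Theorem~\ref{t:main1}, obtained by taking constant weights $\omega_{i,n}=1/m$, so that $I_n=I$, $\omega_+=1/m>0$, and the covering condition holds with $p=1$. Your bookkeeping of these conditions is precisely the (implicit) proof in the paper, so there is nothing to add.
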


Some concrete and new results will be considered in
Section~\ref{s:apps}; there are already several known results
that can be deduced from this framework (see, e.g., \cite{BB} and
\cite{KL}). 

\begin{remark}
We mention here the related frameworks 
by Kiwiel and {\L}opuch \cite{KL} 
who bundled regularity of the fixed point sets together with
regularity of the operators to study accelerated generalizations
of projection methods. Theirs and our techniques
find their roots in \cite{BB}; see also \cite{Thesis}. 
We feel that the approach presented here is more convenient for
applications; indeed, one first checks that the operators are well
behaved --- the algorithms will be likewise if the fixed point
sets relate well to each other. 
\end{remark}

We end this section with the following probabilistic result
whose basic form is due to Leventhal \cite{Leventhal}. 
The proof presented here is somewhat simpler and 
the conclusion is stronger. 

\begin{corollary}[probabilistic algorithm]
Suppose that 
each $T_i$ is boundedly linearly regular and 
averaged nonexpansive. 
Suppose furthermore that $(Z_i)_{i\in I}$ is boundedly
linearly regular. 
Let $x_0\in X$ and 
generate a sequence $(x_n)_\nnn$ in $X$ by
\begin{equation}
(\forall\nnn) \quad x_{n+1} =
T_ix_n
\end{equation}
with probability $\pi_i>0$. 
Then $(x_n)_\nnn$ converges linearly almost surely to a solution
in the sense that there exists a constant $\theta<1$, depending only on
$\|x_0\|$, such that 
\begin{equation}
(\forall\nnn)\quad
\mathbf{E}\, d^2_Z(x_{n+1}) \leq \theta d^2_Z(x_n).
\end{equation}
\end{corollary}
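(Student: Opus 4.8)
The plan is to reduce the probabilistic statement to a one-step expected decrease inequality, then invoke the key inequalities from Lemma~\ref{l:key} together with bounded linear regularity of the family $(Z_i)_{i\in I}$. First I would note that for each realization the map applied is a single boundedly linearly regular averaged nonexpansive operator $T_i$, so the generated sequence is (almost surely) Fej\'er monotone with respect to $Z$: for any $z\in Z$ we have $\|T_ix_n-z\|\leq\|x_n-z\|$ since $z\in Z\subseteq Z_i=\Fix T_i$. In particular $(x_n)_\nnn$ stays in a fixed ball $\ball(0;\rho)$ determined by $\|x_0\|$ (and any fixed $z\in Z$), which is what lets us use a uniform constant depending only on $\|x_0\|$.

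Next I would take conditional expectations. Fix $n$ and condition on $x_n$. Applying \eqref{e:key3} of Lemma~\ref{l:key} to each $T_i$ with $C=Z$ (a nonempty subset of $\Fix T_i$) gives a constant $\gamma_i>0$ (depending on $\rho$, hence on $\|x_0\|$) such that $d_Z^2(T_ix_n)\leq d_Z^2(x_n)-\gamma_i\,d_{Z_i}^2(x_n)$. Writing $\gamma_-=\min_{i\in I}\gamma_i>0$ and $\pi_-=\min_{i\in I}\pi_i>0$, the expected value over the random choice of index yields
\begin{equation}
\mathbf{E}\big[d_Z^2(x_{n+1})\mid x_n\big]
=\sum_{i\in I}\pi_i\,d_Z^2(T_ix_n)
\leq d_Z^2(x_n)-\gamma_-\sum_{i\in I}\pi_i\,d_{Z_i}^2(x_n)
\leq d_Z^2(x_n)-\gamma_-\pi_-\max_{i\in I}d_{Z_i}^2(x_n).
\end{equation}
Then bounded linear regularity of $(Z_i)_{i\in I}$ on $\ball(0;\rho)$ supplies $\mu\geq 1$ with $d_Z(x_n)\leq\mu\max_{i\in I}d_{Z_i}(x_n)$, so that $\max_{i\in I}d_{Z_i}^2(x_n)\geq\mu^{-2}d_Z^2(x_n)$, giving
\begin{equation}
\mathbf{E}\big[d_Z^2(x_{n+1})\mid x_n\big]
\leq\big(1-\gamma_-\pi_-\mu^{-2}\big)d_Z^2(x_n).
\end{equation}
Setting $\theta=1-\gamma_-\pi_-\mu^{-2}$ and checking $\theta\in\left[0,1\right[$ (since $\gamma_-\pi_-\mu^{-2}>0$ and the left side is nonnegative, forcing $\theta\geq 0$), taking total expectations yields the claimed $\mathbf{E}\,d_Z^2(x_{n+1})\leq\theta d_Z^2(x_n)$.

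The main obstacle I anticipate is the bookkeeping needed to make all constants genuinely depend only on $\|x_0\|$: one must verify that the whole trajectory remains in a fixed ball almost surely so that the \emph{same} $\gamma_i$ and $\mu$ (both $\rho$-dependent) work at every step. This follows from Fej\'er monotonicity with respect to any fixed $z\in Z$, which confines $x_n$ to $\ball(z;\|x_0-z\|)$ and hence to a ball about the origin of radius $\rho=\|z\|+\|x_0-z\|$. A secondary point is passing from the in-expectation geometric decrease to almost-sure linear convergence; the statement as phrased only asserts the expected inequality, so strictly speaking one proves that inequality, and the almost-sure linear convergence then follows by a standard supermartingale/Borel--Cantelli argument applied to $d_Z^2(x_n)$, which I would invoke rather than reprove.
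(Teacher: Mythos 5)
Your proposal is correct and follows essentially the same route as the paper: boundedness of the trajectory via Fej\'er monotonicity (so the constants from Lemma~\ref{l:key} and from bounded linear regularity of $(Z_i)_{i\in I}$ can be chosen uniformly, depending only on $\|x_0\|$), then inequality \eqref{e:key3} with $C=Z$ for each $T_i$, then averaging over the random index and invoking bounded linear regularity to obtain the geometric decrease in expectation. The only differences are cosmetic bookkeeping --- you keep $\gamma_-$, $\pi_-$, and $\mu^{-2}$ explicit where the paper folds them into a single constant $\mu$ and sets $\theta=1-\mu$ --- and your remark about conditioning versus total expectation is a point the paper passes over silently.
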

\begin{proof}
Let $z\in Z$, and let $\nnn$. 
Then
$\|x_{n+1}\| = \|T_ix_n\|
\leq \|T_ix_n-z\|+\|z\|\leq \|x_n-z\|+\|z\| \leq
\|x_0-z\|+\|z\|$,
hence every instance of $(x_n)_\nnn$ satisfies
$\sup_\nnn\|x_n\|\leq \|x_0-z\|+\|z\| =\rho$. 
Hence, by \eqref{e:key3} of Lemma~\ref{l:key}, we obtain $\gamma_i$ 
such that
\begin{equation}
\gamma_i d^2_{Z_i}(x_n) \leq 
d_Z^2(x_n) - d^2_{Z}(T_ix_n).
\end{equation}
On the other hand, by bounded linear regularity of
$(Z_1,\ldots,Z_m)$, 
we get $\mu>0$ such that 
\begin{equation}
\label{e:0110a}
\mu d_Z^2(x_n)\leq \sum_{i}\pi_i\gamma_i d^2_{Z_i}(x_n).
\end{equation}
Combining and taking the expected value, we deduce 
\begin{equation}
\mu d_Z^2(x_n) \leq d^2_Z(x_n) - \mathbf{E}\,d^2_Z(x_{n+1}),
\end{equation}
and the result follows with $\theta = 1-\mu$.
\end{proof}

\section{Convergence Results for Cyclic and Random Algorithms}

\label{s:random}

In this section, we focus on strong convergence 
results for algorithms which utilize
the operators either cyclically or
in a more general, not necessarily quasicyclic,
fashion. Simple examples involving projectors show that
linear convergence results are not to be expected. Accordingly, 
the less restrictive notion of (bounded) regularity is introduced
--- it is sufficient for strong convergence.

We start our analysis with the following notion which can be seen
as a qualitative variant of (bounded) linear regularity.

\begin{definition}[(bounded) regularity]
Let $T\colon X\to X$ be such that $\Fix T\neq\varnothing$. 
We say that:
\begin{enumerate}
\item 
$T$ is 
\textbf{regular}
if for every sequence $(x_n)_\nnn$ in $X$, we have
\begin{equation}
x_n-Tx_n\to 0
\quad\Rightarrow\quad
d_{\Fix T}(x_n)\to 0.
\end{equation}
\item
$T$ is 
\textbf{boundedly regular}
if for every sequence $(x_n)_\nnn$ in $X$, we have 
\begin{equation}
\text{$(x_n)_\nnn$ bounded and } x_n-Tx_n\to 0
\quad\Rightarrow\quad
d_{\Fix T}(x_n)\to 0.
\end{equation}
\end{enumerate}
\end{definition}
Comparing with Definition~\ref{d:blrop}, we note that 
\begin{equation}
\text{
linear regularity
$\Rightarrow$
regularity
}
\end{equation}
and that 
\begin{equation}
\text{
bounded linear regularity
$\Rightarrow$
bounded regularity. 
}
\end{equation}

These notions are much less restrictive than their quantitative
linear counterparts: 

\begin{proposition}
Let $T\colon X\to X$ be continuous, suppose that 
$X$ is finite-dimensional\,\footnote{Or, more generally, that $\ran
T$ is boundedly compact.} and that $\Fix T\neq\varnothing$.
Then $T$ is boundedly regular. 
\end{proposition}

We now turn to ``property (S)'', a notion first considered
by Dye \emph{et al.} in \cite{DKLR}.

\begin{definition}[property (S)]
Let $T\colon X\to X$ be nonexpansive such that $\Fix
T\neq\varnothing$. Then 
$T$ has property (S) with respect to $z\in \Fix T$
if for every bounded sequence $(x_n)_\nnn$ such that 
$\|x_n-z\|-\|Tx_n-z\|\to 0$, we have $x_n-Tx_n\to 0$. 
\end{definition}

\begin{proposition}
\label{p:0124a}
Let $T\colon X\to X$ be averaged nonexpansive such that $\Fix
T\neq\varnothing$. 
Then $T$ has property (S) with respect to $\Fix T$.
\end{proposition}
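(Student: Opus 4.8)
The plan is to use the quantitative characterization of averagedness from Fact~\ref{f:aver}. Since $T$ is averaged nonexpansive with $\Fix T \neq \varnothing$, that fact supplies a constant $\sigma > 0$ such that
\begin{equation}
(\forall x\in X)(\forall z\in\Fix T)\quad
\sigma\|x-Tx\|^2 \leq \|x-z\|^2-\|Tx-z\|^2.
\end{equation}
This inequality is the whole engine of the argument: it converts control on the displacement $\|x-z\|-\|Tx-z\|$ into control on the fixed-point residual $\|x-Tx\|$.

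The main step is a direct estimate. Fix any $z\in\Fix T$ and let $(x_n)_\nnn$ be a bounded sequence with $\|x_n-z\|-\|Tx_n-z\|\to 0$. The right-hand side of the displayed inequality factors as
\begin{equation}
\|x_n-z\|^2-\|Tx_n-z\|^2
= \big(\|x_n-z\|-\|Tx_n-z\|\big)\big(\|x_n-z\|+\|Tx_n-z\|\big).
\end{equation}
The first factor tends to $0$ by hypothesis, and the second factor is bounded: boundedness of $(x_n)_\nnn$ gives a bound on $\|x_n-z\|$, and nonexpansiveness of $T$ gives $\|Tx_n-z\| = \|Tx_n-Tz\|\leq\|x_n-z\|$, so $\|Tx_n-z\|$ is bounded by the same quantity. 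Hence the product tends to $0$, and therefore $\sigma\|x_n-Tx_n\|^2\to 0$. Since $\sigma>0$, we conclude $x_n-Tx_n\to 0$, which is exactly property (S) with respect to $z$. As $z\in\Fix T$ was arbitrary, $T$ has property (S) with respect to every point of $\Fix T$.

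I do not expect a genuine obstacle here; the result is essentially a repackaging of Fact~\ref{f:aver}. The only point requiring a little care is making sure both factors in the difference-of-squares are handled correctly: one must invoke boundedness of $(x_n)_\nnn$ together with nonexpansiveness to bound $\|x_n-z\|+\|Tx_n-z\|$ uniformly in $n$, rather than assuming it outright. One should also note that property (S) as defined is stated with respect to a single $z\in\Fix T$, so the phrase ``with respect to $\Fix T$'' in the conclusion is interpreted as ``with respect to every $z\in\Fix T$,'' which the argument above delivers since $z$ never entered the estimate in any special way.
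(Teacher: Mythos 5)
Your proof is correct and follows essentially the same route as the paper: factor $\|x_n-z\|^2-\|Tx_n-z\|^2$ as a difference of squares, use boundedness of $(x_n)_\nnn$ (and nonexpansiveness of $T$) to bound the sum factor, and then invoke Fact~\ref{f:aver} to force $x_n-Tx_n\to 0$. The extra care you take in justifying the boundedness of the second factor is exactly what the paper's ``clearly'' is appealing to, so there is no substantive difference.
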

\begin{proof}
Let $(x_n)_\nnn$ be a bounded sequence in $X$ such that
$\|x_n-z\|-\|Tx_n-z\|\to 0$, where $z\in \Fix T$.
Clearly, $(\|x_n-z\|+\|Tx_n-z\|)_\nnn$ is bounded since
$(x_n)_\nnn$ and $(Tx_n)_\nnn$ are. It follows that
$\|x_n-z\|^2-\|Tx_n-z\|^2\to 0$. 
By Fact~\ref{f:aver}, $x_n-Tx_n\to 0$. 
\end{proof}

\begin{definition}[projective]
Let $T\colon X\to X$ be nonexpansive such that $\Fix
T\neq\varnothing$, and let $z\in\Fix T$. Then 
$T$ is \emph{projective} with respect to $z\in\Fix T$
if for every bounded sequence $(x_n)_\nnn$ such that 
$\|x_n-z\|-\|Tx_n-z\|\to 0$, we have $d_{\Fix T}(x_n)\to 0$.
We say that $T$ is \emph{projective} if it is projective
with respect to all its fixed points. 
\end{definition}

Projectivity implies property~(S):

\begin{lemma}
\label{l:0210a}
Let $T\colon X\to X$ be nonexpansive and suppose that
$T$ is projective with respect to 
$z\in \Fix T$. Then $T$ has property~(S) with respect to $z$. 
\end{lemma}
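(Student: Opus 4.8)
The plan is to observe that projectivity already hands us the distance estimate $d_{\Fix T}(x_n)\to 0$ for exactly the sequences appearing in the definition of property~(S), and then to upgrade this distance bound to the displacement bound $x_n-Tx_n\to 0$ using nothing more than nonexpansiveness and the triangle inequality.

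First I would fix $z\in\Fix T$ and take an arbitrary bounded sequence $(x_n)_\nnn$ with $\|x_n-z\|-\|Tx_n-z\|\to 0$; this is precisely the common hypothesis shared by property~(S) and projectivity. Since $T$ is projective with respect to $z$, this hypothesis yields $d_{\Fix T}(x_n)\to 0$ at once. Next, since $T$ is nonexpansive, $\Fix T$ is closed and convex, so the nearest-point projection $P_{\Fix T}$ is well defined; setting $y_n=P_{\Fix T}x_n$ gives $\|x_n-y_n\|=d_{\Fix T}(x_n)\to 0$. Because $y_n\in\Fix T$ we have $Ty_n=y_n$, and nonexpansiveness gives $\|Tx_n-y_n\|=\|Tx_n-Ty_n\|\leq\|x_n-y_n\|$. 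The triangle inequality then yields
\[
\|x_n-Tx_n\|\leq\|x_n-y_n\|+\|y_n-Tx_n\|\leq 2\|x_n-y_n\|=2\,d_{\Fix T}(x_n)\to 0,
\]
which is exactly property~(S) with respect to $z$.

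I do not expect any genuine obstacle here: the lemma is a soft consequence of the two definitions, and the entire content is the elementary passage from the distance estimate to the displacement estimate. The only point needing (routine) care is the availability of the nearest-point projection, which rests on the closedness and convexity of the fixed point set of a nonexpansive operator; should one wish to avoid invoking this, one can instead select $y_n\in\Fix T$ with $\|x_n-y_n\|\leq d_{\Fix T}(x_n)+1/n$, and the same chain of inequalities delivers the conclusion.
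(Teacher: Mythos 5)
Your proof is correct and follows essentially the same route as the paper: both arguments apply projectivity to get $d_{\Fix T}(x_n)\to 0$ and then use the nearest-point projection onto $\Fix T$ together with nonexpansiveness and the triangle inequality to obtain $\|x_n-Tx_n\|\leq 2\,d_{\Fix T}(x_n)\to 0$. Your closing remark about approximate nearest points is a harmless refinement but not needed, since $\Fix T$ is indeed closed and convex for nonexpansive $T$.
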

\begin{proof}
Observe that
\begin{subequations}
\label{e:0210a}
\begin{align}
(\forall x\in X)\quad
\|x-Tx\|&\leq \|x-P_{\Fix T}x\|+\|P_{\Fix T}x-Tx\|\\
&\leq 2\|x-P_{\Fix T}x\|=2 d_{\Fix T}(x).
\end{align}
\end{subequations}
Now let $(x_n)_\nnn$ be a bounded sequence such that
$\|x_n-z\|-\|Tx_n-z\|\to 0$. 
Since $T$ is projective with respect to $z$, 
we have $d_{\Fix T}(x_n)\to 0$. 
By \eqref{e:0210a}, $x_n-Tx_n\to 0$.
\end{proof}

The importance of projectivity stems from the following 
observation.

\begin{fact}
\label{f:wow}
Let $T\colon X\to X$ be nonexpansive such that $T$ is projective
with respect to some fixed point of $T$. 
Then $(T^nx_0)_\nnn$ converges strongly to a fixed point for
every starting point $x_0\in X$.
\end{fact}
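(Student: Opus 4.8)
The plan is to show that the sequence $(T^nx_0)_\nnn$ is Cauchy, hence strongly convergent, and that its limit is a fixed point. First I would record the standard consequences of nonexpansiveness: since $T$ is projective with respect to some $z\in\Fix T$, it is in particular nonexpansive with $\Fix T\neq\varnothing$, so for every $x_0\in X$ the orbit $(T^nx_0)_\nnn$ is Fej\'er monotone with respect to $\Fix T$ (each $c\in\Fix T$ satisfies $\|T^{n+1}x_0-c\|=\|TT^nx_0-Tc\|\leq\|T^nx_0-c\|$). In particular the real sequence $(\|T^nx_0-z\|)_\nnn$ is nonincreasing and bounded below, hence convergent; consequently $\|T^nx_0-z\|-\|T^{n+1}x_0-z\|\to 0$.

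Next I would feed the orbit into the projectivity hypothesis. Setting $x_n=T^nx_0$, the previous step shows $(x_n)_\nnn$ is bounded and $\|x_n-z\|-\|Tx_n-z\|\to 0$; but $Tx_n=x_{n+1}$, so this is exactly the defining condition in the definition of projective with respect to $z$. Projectivity therefore yields $d_{\Fix T}(x_n)\to 0$. Since $\Fix T$ is closed and convex (the fixed point set of a nonexpansive operator on a Hilbert space is closed and convex) and $(x_n)_\nnn$ is Fej\'er monotone with respect to it with $d_{\Fix T}(x_n)\to 0$, the standard Fej\'er-monotone theory gives norm convergence of $(x_n)_\nnn$ to a point $\bar x\in\Fix T$. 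Concretely, for $m\geq n$ one estimates, using Fej\'er monotonicity and a nearest point $c_n\in\Fix T$ with $\|x_n-c_n\|=d_{\Fix T}(x_n)$, that $\|x_m-x_n\|\leq\|x_m-c_n\|+\|x_n-c_n\|\leq 2d_{\Fix T}(x_n)\to 0$, so $(x_n)_\nnn$ is Cauchy.

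Finally I would identify the limit as a fixed point: since $d_{\Fix T}(x_n)\to 0$ and $x_n\to\bar x$, continuity of $d_{\Fix T}$ forces $d_{\Fix T}(\bar x)=0$, and closedness of $\Fix T$ gives $\bar x\in\Fix T$, so $(T^nx_0)_\nnn$ converges strongly to a fixed point, as claimed.

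I do not expect any serious obstacle here; the result is essentially a clean packaging of Fej\'er monotonicity together with the definitional hypothesis. The only point needing mild care is the passage from $d_{\Fix T}(x_n)\to 0$ plus Fej\'er monotonicity to an actual (strongly convergent, not merely weakly clustering) limit --- this is where the Cauchy estimate $\|x_m-x_n\|\leq 2d_{\Fix T}(x_n)$ does the work and is the heart of the argument. One should also confirm at the outset that $\Fix T$ is nonempty and closed convex so that the nearest-point projection and the distance function behave as used; both follow from $T$ being nonexpansive with a fixed point.
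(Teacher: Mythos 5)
Your proof is correct and complete. For comparison: the paper offers no in-text argument for this fact --- it simply cites \cite[Lemma~2.8.(iii)]{B95} --- so your write-up supplies what the paper leaves to the literature, and the route you take is the canonical one (and presumably the substance of the cited lemma). Writing $x_n=T^nx_0$, your three steps are exactly right: (a) nonexpansiveness makes $(x_n)_\nnn$ Fej\'er monotone with respect to $\Fix T$, hence bounded, and makes $(\|x_n-z\|)_\nnn$ convergent, so the orbit itself is an admissible test sequence in the definition of projectivity with respect to $z$; (b) projectivity then yields $d_{\Fix T}(x_n)\to 0$; (c) Fej\'er monotonicity upgrades this to the Cauchy estimate $\|x_m-x_n\|\leq 2\,d_{\Fix T}(x_n)$ for $m\geq n$, whence strong convergence, and continuity of $d_{\Fix T}$ plus closedness of $\Fix T$ places the limit in $\Fix T$. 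The supporting points you flag are also in order: $\Fix T$ is nonempty, closed and convex because $T$ is nonexpansive with a fixed point on a Hilbert space, so the nearest-point map and the distance function behave as used (alternatively, one can run your Cauchy estimate with approximate nearest points and avoid convexity altogether). Step (c) is precisely the standard characterization that a Fej\'er monotone sequence converges strongly to a point of the target set if and only if the distance to that set tends to zero (see \cite{BC2011}), which is why the paper felt free to dispatch the fact with a citation.
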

\begin{proof}
See \cite[Lemma~2.8.(iii)]{B95}.
\end{proof}

\begin{proposition}
\label{p:wow}
Let $I=\{1,\ldots,m\}$, and 
let $(T_i)_{i\in I}$ be nonexpansive mappings with fixed point
sets $(Z_i)_{i\in I}$. 
Set $Z = \bigcap_{i\in I} Z_i$ and suppose that 
there exists $z\in Z$ such that 
each $T_i$ is projective with respect to $z$ and that
$(Z_i)_{i\in I}$ is boundedly regular. 
Then $T = T_m\cdots T_2T_1$ is projective with respect to $z$ as
well. Consquently, for every $x_0\in X$,
$(T^nx_0)_\nnn$ converges strongly to some point in $Z$.
\end{proposition}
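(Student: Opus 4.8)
The plan is to establish that the composition $T$ is projective with respect to $z$, and then to read off strong convergence from Fact~\ref{f:wow}, afterwards refining its conclusion so that the limit lands in $Z$ and not merely in $\Fix T$. As preliminaries I note that $z\in\Fix T$ (each $T_i$ fixes $z$, so $Tz=z$), that $Z\subseteq\Fix T$ and hence $d_{\Fix T}\le d_Z$, and that $Z$ is closed. Crucially, since each $T_i$ is projective with respect to $z$, Lemma~\ref{l:0210a} guarantees that each $T_i$ also has property~(S) with respect to $z$; both properties of the individual operators will be used.

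To verify projectivity, I would fix a bounded sequence $(x_n)_\nnn$ with $\|x_n-z\|-\|Tx_n-z\|\to 0$ and track the intermediate iterates $x_n^{(0)}=x_n$ and $x_n^{(k)}=T_kx_n^{(k-1)}$, so that $x_n^{(m)}=Tx_n$. Because each $T_k$ is nonexpansive and fixes $z$, the numbers $\|x_n^{(k)}-z\|$ decrease in $k$; since their total drop $\|x_n-z\|-\|Tx_n-z\|$ tends to $0$, each consecutive gap satisfies $\|x_n^{(k-1)}-z\|-\|x_n^{(k)}-z\|\to 0$. Each intermediate sequence $(x_n^{(k-1)})_\nnn$ is bounded (its norm relative to $z$ is dominated by that of $x_n$), so property~(S) of $T_k$ gives $x_n^{(k-1)}-x_n^{(k)}\to 0$, and telescoping yields $x_n^{(k)}-x_n\to 0$ for every $k$.

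Next I would apply projectivity of $T_k$ to the bounded sequence $(x_n^{(k-1)})_\nnn$ to get $d_{Z_k}(x_n^{(k-1)})\to 0$; since $d_{Z_k}$ is nonexpansive and $x_n^{(k-1)}-x_n\to 0$, this transfers to $d_{Z_k}(x_n)\to 0$, hence $\max_{i\in I}d_{Z_i}(x_n)\to 0$. Bounded regularity of $(Z_i)_{i\in I}$ then forces $d_Z(x_n)\to 0$, and therefore $d_{\Fix T}(x_n)\le d_Z(x_n)\to 0$, which is exactly projectivity of $T$ with respect to $z$. Fact~\ref{f:wow} now supplies, for each $x_0$, a strong limit $\bar x=\lim_n T^nx_0\in\Fix T$. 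To place $\bar x$ in $Z$, I would observe that $(T^nx_0)_\nnn$ is Fej\'er monotone with respect to $Z$ (every $T_i$ fixes the points of $Z$), so writing $y_n=T^nx_0$ one has $\|y_n-z\|-\|Ty_n-z\|\to 0$; the argument just given in fact produces $d_Z(y_n)\to 0$ (not only $d_{\Fix T}(y_n)\to 0$), and continuity of $d_Z$ then gives $d_Z(\bar x)=0$, i.e.\ $\bar x\in Z$.

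The hard part is the transfer step: projectivity of $T_k$ controls $d_{Z_k}$ only at the intermediate iterate $x_n^{(k-1)}$, whereas the definition of projectivity of $T$ demands control at the common argument $x_n$. This is precisely where property~(S) of the operators --- rather than projectivity alone --- is indispensable, as it furnishes $x_n^{(k-1)}-x_n\to 0$ and lets the estimates be pulled back to $x_n$. A further subtlety, easy to overlook, is that Fact~\ref{f:wow} only locates the limit in $\Fix T$, which may strictly contain $Z$; recovering $\bar x\in Z$ relies on the fact that the projectivity argument actually yields the stronger conclusion $d_Z(x_n)\to 0$.
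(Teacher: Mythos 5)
Your proof is correct, and its core---telescoping $\|x_n-z\|-\|Tx_n-z\|$ across the intermediate iterates $x_n^{(k)}$, invoking property~(S) of each $T_k$ (via Lemma~\ref{l:0210a}) to pull the intermediate iterates back to $x_n$, applying projectivity of $T_k$ to control $d_{Z_k}$, and then bounded regularity to pass to $d_Z$---is exactly the paper's argument. The one structural difference is how the fixed point set of the composition is handled. The paper first observes that each $T_i$ is attracting and invokes \cite[Proposition~2.10]{BB} to conclude that $\Fix T = Z$; with that equality in hand, $d_Z(x_n)\to 0$ \emph{is} projectivity of $T$, and Fact~\ref{f:wow} immediately places the limit in $Z$. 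You never establish $\Fix T = Z$: you deduce projectivity from the weaker inclusion $Z\subseteq\Fix T$ (so that $d_{\Fix T}\leq d_Z$), and then you must repair the conclusion of Fact~\ref{f:wow}---which a priori only yields a limit in $\Fix T$---by re-running your argument along the orbit $y_n=T^nx_0$ (bounded, with $\|y_n-z\|-\|Ty_n-z\|\to 0$ by Fej\'er monotonicity) to get $d_Z(y_n)\to 0$ and hence $d_Z(\bar x)=0$. That patch is valid, and it makes your proof self-contained modulo Fact~\ref{f:wow}, avoiding the external attracting-operators result; the paper's route is shorter and yields the stronger structural fact $\Fix T=Z$ as a byproduct.
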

\begin{proof}
We have $(\forall i\in I)$ 
$(\forall x\in X \smallsetminus Z_i)$
$(\forall z\in Z_i)$
$\|T_ix-z\|<\|x-z\|$,
i.e., each $T_i$ is attracting. 
By \cite[Proposition~2.10]{BB}, $T$ is attracting
and $\Fix T = Z$.
Now suppose that $(x_n)_\nnn$ is a bounded sequence in $X$ 
such that $\|x_n-z\|-\|Tx_n-z\|\to 0$. 
Note that
\begin{equation}
0\leq \sum_{i=1}^m \|T_{i-1}\cdots T_1x_n-z\|-\|T_{i}T_{i-1}\cdots
T_1x_n-z\|
=\|x_n-z\|-\|Tx_n-z\|\to 0,
\end{equation}
that each sequence $(T_{i-1}\cdots T_1x_n)_\nnn$ is bounded, and
that 
\begin{equation}
(\forall i\in I)\quad
\|T_{i-1}\cdots T_1x_n-z\|-\|T_{i}T_{i-1}\cdots T_1x_n-z\|\to
0.
\end{equation}
This has two consequences. 
First, 
\begin{equation}
(\forall i\in I)\quad
T_{i-1}\cdots T_1x_n - T_{i}T_{i-1}\cdots T_1x_n\to 0
\end{equation}
by Lemma~\ref{l:0210a}.
Second, 
\begin{equation}
(\forall i\in I)\quad
d_{Z_i}(T_{i-1}\cdots T_1x_n)\to 0
\end{equation}
because $T_i$ is projective with respect to $z$.
Altogether,
$(\forall i\in I)$
$d_{Z_i}(x_n)\to 0$. 
Since $(Z_i)_{i\in I}$ is boundedly regular, it follows that
$d_Z(x_n)\to 0$. 
Hence $T$ is projective with respect to $z$ and the result
now follows from Fact~\ref{f:wow}. 
\end{proof}

Property~(S) in tandem with bounded regularity implies
projectivity, which turns out to be crucial for the results
on random algorithms. 

\begin{proposition}
\label{p:0124b}
Let $T\colon X\to X$ be nonexpansive such that 
$\Fix T\neq \varnothing$, and let $z\in \Fix T$.
Suppose that $T$ satisfies
property~(S) with respect to $z$, and that $T$ is boundedly
regular. Then $T$ is projective with respect to $z$.
\end{proposition}
\begin{proof}
Let $(x_n)_\nnn$ be bounded such that $\|x_n-z\|-\|Tx_n-z\|\to
0$.
By property~(S), $x_n-Tx_n\to 0$. 
By bounded regularity, $d_{\Fix T}(x_n)\to 0$, as required. 
\end{proof}

The next result is quite useful.

\begin{corollary}
\label{c:0124c}
Let $T\colon X\to X$ be averaged nonexpansive and boundedly
regular such that $\Fix T\neq\varnothing$.
Then $T$ is projective with respect to $\Fix T$.
\end{corollary}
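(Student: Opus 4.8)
The goal is Corollary~\ref{c:0124c}: an averaged nonexpansive, boundedly regular operator $T$ with $\Fix T\neq\varnothing$ is projective with respect to $\Fix T$. The plan is to realize this as an immediate consequence of the machinery just assembled, namely Proposition~\ref{p:0124a} and Proposition~\ref{p:0124b}. The strategy is to verify the two hypotheses of Proposition~\ref{p:0124b} and then invoke it at each fixed point.

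First I would fix an arbitrary $z\in\Fix T$; the claim is that $T$ is projective with respect to this $z$, and since $z$ is arbitrary this yields projectivity with respect to all of $\Fix T$. The first hypothesis to check is property~(S) with respect to $z$. Since $T$ is averaged nonexpansive, Proposition~\ref{p:0124a} tells us that $T$ has property~(S) with respect to every point of $\Fix T$, in particular with respect to $z$. The second hypothesis is bounded regularity of $T$, which is assumed outright. With both hypotheses in hand, Proposition~\ref{p:0124b} delivers that $T$ is projective with respect to $z$.

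I do not expect any serious obstacle here: the result is essentially a composition of two previously established propositions, and the only real content is the bookkeeping that ``with respect to $\Fix T$'' means ``with respect to every $z\in\Fix T$,'' which matches the quantification in the definition of projective. If I wanted to be careful about the logical structure, the one point worth stating explicitly is that Proposition~\ref{p:0124a} is what converts the \emph{averaged} hypothesis into property~(S)---this is the step doing the real work, and it in turn rests on Fact~\ref{f:aver}. So the proof is a one-line chaining argument, and writing it out would amount to: let $z\in\Fix T$; by Proposition~\ref{p:0124a}, $T$ has property~(S) with respect to $z$; together with bounded regularity, Proposition~\ref{p:0124b} shows $T$ is projective with respect to $z$; as $z$ was arbitrary, $T$ is projective with respect to $\Fix T$.
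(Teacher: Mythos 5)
Your proof is correct and follows exactly the paper's route: the paper's own proof is the one-line ``Combine Proposition~\ref{p:0124a} and Proposition~\ref{p:0124b}.'' Your write-up merely makes explicit the quantification over $z\in\Fix T$ that the paper leaves implicit, which is fine.
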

\begin{proof}
Combine Proposition~\ref{p:0124a} and Proposition~\ref{p:0124b}.
\end{proof}

We now obtain a powerful
strong convergence result for cyclic algorithms.

\begin{theorem}[cyclic algorithm]
\label{t:main3}
Set $I=\{1,\ldots,m\}$, and 
let $(T_i)_{i\in I}$ be family of averaged nonexpansive mappings from $X$
to $X$ with fixed point sets $(Z_i)_{i\in i}$, respectively. 
Suppose that each $T_i$ is boundedly regular,
that $Z = \bigcap_{i\in I} Z_i\neq\varnothing$, and that
$(Z_i)_{i\in I}$ is boundedly regular.
Then 
for every $x_0\in X$,
the sequence $((T_m\cdots T_1)^nx_0)_\nnn$
converges strongly to some point in $Z$.
\end{theorem}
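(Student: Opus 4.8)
The plan is to reduce the strong convergence of the cyclic iteration to an application of Fact~\ref{f:wow}, by showing that the composite operator $T = T_m\cdots T_1$ is projective with respect to some fixed point. This is exactly the structure already assembled in Proposition~\ref{p:wow}: if each $T_i$ is projective with respect to a common fixed point $z\in Z$ and $(Z_i)_{i\in I}$ is boundedly regular, then $T$ is projective with respect to $z$, and hence $(T^nx_0)_\nnn$ converges strongly to a point in $Z$ for every $x_0$. So the entire theorem will follow once I verify that the hypotheses of Proposition~\ref{p:wow} hold under the assumptions given here.

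The key step is to upgrade the hypothesis ``each $T_i$ is averaged nonexpansive and boundedly regular'' to ``each $T_i$ is projective with respect to every point of $Z_i$ (in particular with respect to any $z\in Z$)''. This is precisely the content of Corollary~\ref{c:0124c}: an averaged nonexpansive, boundedly regular operator with nonempty fixed point set is projective with respect to its whole fixed point set. First I would invoke this corollary for each $i\in I$ to conclude that $T_i$ is projective with respect to $Z_i$, and so in particular projective with respect to any fixed $z\in Z\subseteq Z_i$. Since $Z\neq\varnothing$ by hypothesis, such a $z$ exists and is common to all the $Z_i$.

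Having established that each $T_i$ is projective with respect to a common $z\in Z$, and retaining the hypothesis that $(Z_i)_{i\in I}$ is boundedly regular, all the hypotheses of Proposition~\ref{p:wow} are met. Applying it directly yields that $T = T_m\cdots T_1$ is projective with respect to $z$ and that $(T^nx_0)_\nnn$ converges strongly to some point in $Z$ for every starting point $x_0\in X$, which is the assertion. The proof is therefore a short chaining of two earlier results.

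The only point requiring a little care is matching the regularity hypotheses: Proposition~\ref{p:wow} asks for projectivity with respect to a \emph{single} common $z$, whereas Corollary~\ref{c:0124c} delivers projectivity with respect to \emph{all} of $\Fix T_i = Z_i$; the bridge is simply that any $z\in Z$ lies in each $Z_i$. I expect no genuine obstacle here, since Propositions~\ref{p:0124a} and~\ref{p:0124b} (combined in Corollary~\ref{c:0124c}) already do the analytic work of converting the averaged-plus-boundedly-regular structure into projectivity via property~(S). The substance of the argument lives in those earlier results, and the theorem itself is a clean assembly.
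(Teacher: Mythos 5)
Your proposal is correct and is exactly the paper's own proof: invoke Corollary~\ref{c:0124c} to get that each $T_i$ is projective with respect to every point of $Z\subseteq Z_i$, then apply Proposition~\ref{p:wow} (which rests on Fact~\ref{f:wow}) to the composition $T_m\cdots T_1$. Nothing is missing; the care you take in passing from projectivity with respect to all of $Z_i$ to projectivity with respect to a single common $z\in Z$ is the same (trivial but necessary) bridge the paper uses.
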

\begin{proof}
By Corollary~\ref{c:0124c},
each $T_i$ is projective with respect to every point in $Z$. 
The result thus follows from Proposition~\ref{p:wow}. 
\end{proof}

Let us now turn to random algorithms. 

\begin{definition}[random map] 
The map $r\colon \NN\to I$ is a \emph{random map} for $I$
if $(\forall i\in I)$ $r^{-1}(i)$ contains infinitely many
elements. 
\end{definition}

\begin{fact}
\label{f:oldtimes}
{\rm (See \cite[Theorem~3.3]{B95}.)}
Suppose that $(T_i)_{i\in I}$ are projective with respect to a common
fixed point, and that 
$(Z_i)_{i\in I}$ is innately boundedly regular. 
Let $x_0\in X$, let $r$ be a random map for $I$, and generate a sequence
$(x_n)_\nnn$ in $X$ by
\begin{equation}
(\forall\nnn)\quad x_{n+1} = T_{r(n)}x_n.
\end{equation}
Then $(x_n)_\nnn$ converges strongly to some point in $Z$. 
\end{fact}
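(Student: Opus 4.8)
The plan is to verify the hypotheses of the usual Fej\'er-monotone machinery and then to convert the per-operator information supplied by projectivity into a statement about the full intersection $Z$. First I would record that, since the common fixed point $z$ is fixed by every $T_i$ and each $T_i$ is nonexpansive, the iteration $x_{n+1}=T_{r(n)}x_n$ is Fej\'er monotone with respect to $z$ (indeed with respect to all of $Z$). Hence $(x_n)_\nnn$ is bounded and $(\|x_n-z\|)_\nnn$ is nonincreasing, so it converges; telescoping then gives $\|x_n-z\|-\|x_{n+1}-z\|\to 0$. Because each $T_i$ is projective it has property~(S) with respect to $z$ by Lemma~\ref{l:0210a}, and applying this to the bounded sequence $(x_n)_\nnn$ (using $\|x_n-z\|-\|T_{r(n)}x_n-z\|\to 0$) yields $x_n-x_{n+1}=x_n-T_{r(n)}x_n\to 0$.

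Next I would extract the per-index distance information. Fix $i\in I$. Along the infinite set $r^{-1}(i)$ the telescoped quantity reads $\|x_n-z\|-\|T_ix_n-z\|\to 0$, so projectivity of $T_i$ with respect to $z$ gives $d_{Z_i}(x_n)\to 0$ as $n\to\infty$ within $r^{-1}(i)$. The core of the argument is then to promote these subsequential statements to $d_Z(x_n)\to 0$ along the whole sequence. I would argue by cluster points: given a weak cluster point $\bar x$ with $x_{n_k}\weakly\bar x$, I would use that for each fixed $j$ the consecutive-difference estimate forces $x_{n_k+j}-x_{n_k}\to 0$, hence $x_{n_k+j}\weakly\bar x$ as well; a pigeonhole on the finitely many indices (together with a diagonalisation) lets me read off, for the indices $i$ occurring at the shifted positions, that $d_{Z_i}(x_{n_k+j})\to 0$ and therefore $\bar x\in Z_i$ by weak lower semicontinuity of $d_{Z_i}$. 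Innate bounded regularity enters precisely here: applied to the subfamily of indices seen near the cluster point, it converts the collected memberships into control of $d_Z$, and combined with the observation that a single nonexpansive map fixing a point of $Z$ cannot move an iterate away from that point during a run of that operator, it prevents drift away from $Z$ during the (possibly long) stretches in which some indices are dormant. This is meant to yield that every weak cluster point lies in $Z$ and, more quantitatively, that $d_Z(x_n)\to 0$.

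Finally, I would close the argument with the standard Fej\'er principle: a sequence that is Fej\'er monotone with respect to the nonempty closed convex set $Z$ and satisfies $d_Z(x_n)\to 0$ converges strongly to a point of $Z$ (the projections $P_Zx_n$ form a Cauchy sequence, since Fej\'er monotonicity bounds $\|x_m-P_Zx_n\|$ by $d_Z(x_n)$ for $m\ge n$). I expect the genuine obstacle to be the promotion step of the middle paragraph: because the random map only guarantees that each index recurs infinitely often, with no bound on the gaps between successive occurrences, one cannot simply shift a cluster subsequence by a bounded amount to land on the times where a given index is active. Handling these unbounded dormant stretches---showing that the iterate does not escape a neighbourhood of $Z$ between visits and that every index is eventually accounted for at a cluster point---is exactly what forces the use of property~(S), of innate (rather than plain) bounded regularity, and of the single-operator convergence behaviour recorded in Fact~\ref{f:wow}; this is the technical heart carried out in \cite{B95}.
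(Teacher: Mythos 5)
Your framing is fine as far as it goes: Fej\'er monotonicity of $(x_n)_\nnn$ with respect to $Z$, the telescoping argument giving $\|x_n-z\|-\|x_{n+1}-z\|\to 0$, property~(S) via Lemma~\ref{l:0210a} applied separately along each $r^{-1}(i)$ (it must be per index, since the operator changes with $n$) to get $x_n-x_{n+1}\to 0$, projectivity giving $d_{Z_i}(x_n)\to 0$ along $r^{-1}(i)$, and the closing step that Fej\'er monotonicity plus $d_Z(x_n)\to 0$ yields norm convergence to a point of $Z$. But the middle of your argument --- the only genuinely hard part --- is not carried out, and you say so explicitly: you defer ``the technical heart'' to \cite{B95}. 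Bear in mind that the paper itself offers no proof of this statement; it is quoted as a Fact precisely because it is imported from \cite[Theorem~3.3]{B95}. So a proposal whose decisive step is ``see \cite{B95}'' proves nothing beyond what the paper already assumes, and cannot be assessed as a proof.

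Moreover, the sketch you do give of that step would fail. First, pigeonholing over bounded shifts $j$ of a weakly convergent subsequence $(x_{n_k})$ only accounts for indices $i$ that occur at positions $n_k+j$ with $j$ bounded infinitely often; a random map permits unboundedly long dormant stretches, so this yields only $\bar x\in\bigcap_{j\in J^*}Z_j$ for a possibly proper subfamily $J^*\subseteq I$. Your appeal to innate bounded regularity ``applied to the subfamily of indices seen near the cluster point'' is then a non sequitur: regularity of $(Z_j)_{j\in J^*}$ controls $d_{\bigcap_{j\in J^*}Z_j}$, not $d_Z$, unless $J^*=I$, which is exactly what you cannot guarantee. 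Second, the ``no drift'' observation (a nonexpansive map fixing $z\in Z$ does not increase distance to $z$) is just Fej\'er monotonicity restated; it in no way prevents the iterates from maintaining a constant positive distance to $Z$ while only subfamily distances decay. Third, even if you could show that every weak cluster point lies in $Z$, for a Fej\'er monotone sequence that gives only \emph{weak} convergence; strong convergence requires $d_Z(x_n)\to 0$ (equivalently, a strong cluster point in $Z$), and nothing in your argument produces it. The hypothesis of \emph{innate} (rather than plain) bounded regularity is itself the clue: the actual proof in \cite{B95} must invoke regularity of proper subfamilies in an essential, inductive way, a mechanism that is structurally absent from your sketch.
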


We are ready for our last main result.

\begin{theorem}[random algorithm]
\label{t:main2}
Suppose that each $T_i$ is averaged nonexpansive and boundedly
regular, and that 
$(Z_i)_{i\in I}$ is innately boundedly regular. 
Let $x_0\in X$, let $r$ be a random map for $I$, and generate a sequence
$(x_n)_\nnn$ in $X$ by
\begin{equation}
(\forall\nnn)\quad x_{n+1} = T_{r(n)}x_n.
\end{equation}
Then $(x_n)_\nnn$ converges strongly to some point $\bar{z}\in Z$. 
If $Z$ is an affine subspace, then $\bar{z}=P_Zx_0$. 
\end{theorem}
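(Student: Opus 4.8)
The plan is to reduce the statement to the black-box strong convergence result Fact~\ref{f:oldtimes} for random iterations, whose hypotheses are (a) that the operators $(T_i)_{i\in I}$ are projective with respect to a \emph{common} fixed point, and (b) that $(Z_i)_{i\in I}$ is innately boundedly regular. Hypothesis (b) is assumed outright, so the entire task is to manufacture a common point with respect to which every $T_i$ is projective; the subsequent strong convergence and the identification of the limit in the affine case are then routine.

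First I would upgrade the regularity hypothesis on the operators to projectivity. By assumption each $T_i$ is averaged nonexpansive and boundedly regular, so Corollary~\ref{c:0124c} applies to each of them and yields that $T_i$ is projective with respect to $\Fix T_i = Z_i$, i.e., with respect to every point of $Z_i$. Because $Z = \bigcap_{i\in I} Z_i \subseteq Z_i$ for each $i$ and $Z \neq \varnothing$ by the standing assumption of Section~\ref{s:main}, any fixed point $z \in Z$ is simultaneously a point of every $Z_i$. Hence all the $T_i$ are projective with respect to this common $z$, which is exactly hypothesis (a). With (a) and (b) in hand, Fact~\ref{f:oldtimes} immediately gives that the random sequence $(x_n)_\nnn$ generated by $x_{n+1} = T_{r(n)}x_n$ converges strongly to some point $\bar{z} \in Z$.

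It remains to identify $\bar{z}$ when $Z$ is an affine subspace. Since every $z \in Z$ is a fixed point of each $T_i$ and each $T_i$ is nonexpansive, we have $\|x_{n+1} - z\| = \|T_{r(n)} x_n - T_{r(n)} z\| \leq \|x_n - z\|$, so $(x_n)_\nnn$ is Fej\'er monotone with respect to $Z$. The strong limit $\bar{z}$ is the unique weak cluster point of $(x_n)_\nnn$ and lies in $Z$; thus all weak cluster points belong to the closed affine subspace $Z$. Applying Fact~\ref{f:Fejer}\ref{f:Fejer2} I conclude $x_n \weakly P_Z x_0$, whence $\bar{z} = P_Z x_0$.

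I do not anticipate a genuine obstacle here: the theorem is essentially an assembly of the two key earlier results, and the only point requiring care is verifying that the common-fixed-point requirement of Fact~\ref{f:oldtimes} is met. That is precisely where the passage from the per-operator projectivity furnished by Corollary~\ref{c:0124c} (which a priori holds with respect to each $Z_i$ separately) to a single shared point $z \in Z$ is used.
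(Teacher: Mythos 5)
Your proposal is correct and follows essentially the same route as the paper: Corollary~\ref{c:0124c} upgrades each boundedly regular averaged operator to projectivity with respect to $Z_i \supseteq Z$, Fact~\ref{f:oldtimes} then yields strong convergence, and Fact~\ref{f:Fejer}\ref{f:Fejer2} identifies the limit as $P_Z x_0$ in the affine case. The only difference is that you spell out the common-fixed-point verification and the Fej\'er monotonicity explicitly, which the paper leaves implicit.
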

\begin{proof}
By Corollary~\ref{c:0124c}, each $T_i$ is projective with respect to $Z_i$
and hence with respect to $Z$. 
Now apply Fact~\ref{f:oldtimes} and
Fact~\ref{f:Fejer}\ref{f:Fejer2}. 
\end{proof}

\section{Applications and Numerical Results}

\label{s:apps}

\subsection{The Borwein--Tam Method (BTM)}

\label{s:BT}

In this section, $I=\{1,\ldots,m\}$ and
$(U_i)_{i\in I}$ is a family of closed convex subsets of $X$ with
\begin{equation}
U = \bigcap_{i\in I}U_i\neq\varnothing.
\end{equation}
Now set $U_{m+1}=U_1$,
\begin{equation}
(\forall i\in I)\quad
T_{i} = T_{U_{i+1},U_{i}} = P_{U_{i+1}}R_{U_i}+\Id -
P_{U_i},\;\;
Z_i = \Fix T_i,\;\;
Z = \bigcap_{i\in I} Z_i
\end{equation}
and define the \emph{Borwein--Tam operator} by
\begin{equation}
T = T_{m}T_{m-1}T_{m-2}\cdots T_{2}T_{1}.
\end{equation}

The following result is due to Borwein and Tam (see
\cite[Theorem~3.1]{BT}):

\begin{fact}[Borwein--Tam method (BTM)]
Let $x_0\in X$ and generate the sequence $(x_n)_\nnn$ by
\begin{equation}
(\forall\nnn)\quad x_{n+1}=T^nx_0.
\end{equation}
Then $(x_n)_\nnn$ converges weakly to a point $\bar{x}\in Z$
such that $P_{U_1}\bar{x}=\cdots = P_{U_m}\bar{x}\in U$.
\end{fact}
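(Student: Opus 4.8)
The plan is to exhibit $T$ as an averaged nonexpansive operator with nonempty fixed point set, invoke the standard weak convergence of its iterates, and then \emph{upgrade} the weak limit from a fixed point of $T$ to a point of the smaller set $Z$, from which the projection identity can be read off. First I would record that each $T_i$ is the Douglas--Rachford operator for the pair $(U_i,U_{i+1})$ and hence firmly nonexpansive (indeed $T_i=\tfrac12(\Id+R_{U_{i+1}}R_{U_i})$), so in particular averaged; since the class of averaged operators is closed under composition, $T=T_m\cdots T_1$ is averaged nonexpansive. Next I would check $U\subseteq Z$: if $c\in U$ then $P_{U_i}c=c$ and $R_{U_i}c=c$, so $T_ic=P_{U_{i+1}}c+c-c=c$ for every $i$, whence $c\in\bigcap_i\Fix T_i=Z\subseteq\Fix T$. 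Thus $\Fix T\neq\varnothing$, and the classical weak convergence theorem for iterates of an averaged nonexpansive operator yields $x_n:=T^nx_0\weakly\bar x$ for some $\bar x\in\Fix T$.

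The substantive step is to show $\bar x\in Z$, not merely $\bar x\in\Fix T$. Fix $c\in U$ and, within one cycle, set $y_0^{(n)}=x_n$ and $y_i^{(n)}=T_iy_{i-1}^{(n)}$, so that $y_m^{(n)}=x_{n+1}$. Since $c\in\Fix T_i$, Fact~\ref{f:aver} supplies $\sigma_i>0$ with $\sigma_i\|y_{i-1}^{(n)}-y_i^{(n)}\|^2\le\|y_{i-1}^{(n)}-c\|^2-\|y_i^{(n)}-c\|^2$; telescoping over the cycle and then summing over $n$ bounds $\sum_n\sum_i\|(\Id-T_i)y_{i-1}^{(n)}\|^2$ by $\sigma_+^{-1}\|x_0-c\|^2$, so $(\Id-T_i)y_{i-1}^{(n)}\to 0$ for each $i$. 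In particular every intermediate iterate satisfies $\|y_i^{(n)}-x_n\|\to 0$, hence $y_i^{(n)}\weakly\bar x$; the demiclosedness of $\Id-T_i$ at $0$ then forces $(\Id-T_i)\bar x=0$, i.e.\ $\bar x\in\Fix T_i$ for every $i$, so $\bar x\in Z$.

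Finally I would extract the projection identity. The fixed-point equation $\bar x\in\Fix T_i$ reads $P_{U_{i+1}}R_{U_i}\bar x=P_{U_i}\bar x$; writing $p_i=P_{U_i}\bar x$, the left-hand side exhibits $p_i$ as a projection onto $U_{i+1}$, so $p_i\in U_{i+1}$. Consequently $d_{U_{i+1}}(\bar x)\le\|p_i-\bar x\|$, that is $\|p_{i+1}-\bar x\|\le\|p_i-\bar x\|$, where indices are read cyclically with $U_{m+1}=U_1$. Chaining these inequalities around the full cycle forces $\|p_1-\bar x\|=\cdots=\|p_m-\bar x\|$; since $p_i\in U_{i+1}$ then attains the distance $d_{U_{i+1}}(\bar x)=\|p_{i+1}-\bar x\|$, uniqueness of the projection onto the convex set $U_{i+1}$ gives $p_i=p_{i+1}$. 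Hence all $p_i$ coincide with a common point $p$, and $p=p_i\in U_i$ for every $i$ shows $p\in\bigcap_iU_i=U$; thus $P_{U_1}\bar x=\cdots=P_{U_m}\bar x=p\in U$.

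I expect the main obstacle to be the upgrade in the second paragraph: weak convergence only locates $\bar x$ in $\Fix T$, which is in general strictly larger than $Z$, and the passage to $Z$ genuinely needs both the telescoped summability (to annihilate each intra-cycle increment) and the demiclosedness principle (to survive the merely weak convergence of the intermediate iterates). By contrast, the projection bookkeeping in the last paragraph is elementary once $\bar x\in Z$ is in hand.
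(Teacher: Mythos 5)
Your proof is correct, but note that the paper itself offers no argument for this statement at all: it is imported as a Fact with a citation to Borwein and Tam \cite[Theorem~3.1]{BT}, so what you have produced is a self-contained proof of a result the paper merely quotes. Every step checks out: the identity $T_i=\tfrac12(\Id+R_{U_{i+1}}R_{U_i})$ does hold (one must expand it pointwise, since the projectors are nonlinear, but the computation goes through), $U\subseteq Z\subseteq\Fix T$ gives nonemptiness, Krasnosel'skii--Mann gives weak convergence to some $\bar x\in\Fix T$, the telescoped Fej\'er-type inequality from Fact~\ref{f:aver} kills the intra-cycle increments, demiclosedness of $\Id-T_i$ transfers the weak limit into each $Z_i$, and the cyclic chaining $\|p_{i+1}-\bar x\|\le\|p_i-\bar x\|$ together with uniqueness of nearest points yields $p_1=\cdots=p_m\in U$.

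Comparing with the standard route (essentially the one in \cite{BT}): the ``upgrade'' step you flag as the main obstacle can be obtained in one line from a tool the present paper already uses elsewhere, namely \cite[Proposition~2.10]{BB} (invoked in Proposition~\ref{p:wow}): firmly nonexpansive maps are attracting, and attracting maps with a common fixed point satisfy $\Fix(T_m\cdots T_1)=\bigcap_i\Fix T_i$, i.e.\ $\Fix T=Z$, so the weak limit lands in $Z$ automatically. Your summability-plus-demiclosedness argument re-proves this identity implicitly; it is longer but more elementary and has the side benefit of showing that all intermediate iterates $y_i^{(n)}$ share the same weak limit, which the shortcut does not give. The projection bookkeeping in your final paragraph is a clean elementary replacement for the fixed-point characterization used by Borwein and Tam, and I see no gap in it.
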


The following new results now follow from our analysis.

\begin{corollary}[transversal sets]
Suppose that $X$ is finite-dimensional and that
$\bigcap_{i\in I}\reli U_i\neq\varnothing$. 
Then the convergence of the Borwein--Tam method is with a linear rate.
\end{corollary}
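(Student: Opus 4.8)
The plan is to recognize this corollary as a direct application of the linear convergence machinery for cyclic algorithms (Corollary~\ref{c:cyclic}) to the specific family of Douglas--Rachford operators $(T_i)_{i\in I}$ defining the Borwein--Tam operator $T = T_m\cdots T_1$. The two hypotheses of Corollary~\ref{c:cyclic} that must be verified are: (a) each $T_i$ is boundedly linearly regular and averaged nonexpansive, and (b) the family $(Z_i)_{i\in I}$ of fixed point sets is boundedly linearly regular.

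First I would establish property (a). Each $T_i = T_{U_{i+1},U_i} = P_{U_{i+1}}R_{U_i} + \Id - P_{U_i}$ is a Douglas--Rachford operator, hence firmly nonexpansive and in particular averaged nonexpansive. For bounded linear regularity I would invoke Theorem~\ref{t:Hung}: it suffices that the pair $(U_i,U_{i+1})$ be transversal, i.e.\ $\reli U_i \cap \reli U_{i+1}\neq\varnothing$. This follows immediately from the standing hypothesis $\bigcap_{i\in I}\reli U_i\neq\varnothing$, since a point in the full relative-interior intersection lies in particular in $\reli U_i\cap\reli U_{i+1}$ for each consecutive pair. Thus every $T_i$ is boundedly linearly regular.

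Next I would handle property (b). The key structural fact is that, under transversality, Proposition~\ref{p:0208a}\ref{p:0208a3} gives $L_i\cap\Fix T_i = U_i\cap U_{i+1}$, so that the common fixed point set $Z=\bigcap_i Z_i$ is intimately tied to $U=\bigcap_i U_i$ (one expects $P_{U_1}\bar x=\cdots=P_{U_m}\bar x\in U$ for $\bar x\in Z$, as in the Borwein--Tam convergence statement). To obtain bounded linear regularity of $(Z_i)_{i\in I}$ I would appeal to the finite-dimensionality together with Fact~\ref{f:regu}: in finite dimensions, with the relevant sets being intersections of the transversal convex sets, the regularity criteria in Fact~\ref{f:regu}\ref{f:regu6} (the polyhedral/relative-interior condition) apply once one checks the qualification $\bigcap_i\reli Z_i\neq\varnothing$, which again descends from $\bigcap_i\reli U_i\neq\varnothing$ via the $\Fix$-formula.

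The main obstacle I anticipate is relating the geometry of the fixed point sets $Z_i$ back to the original sets $U_i$ cleanly enough to verify bounded linear regularity of $(Z_i)_{i\in I}$ rather than of $(U_i)_{i\in I}$. The operators $T_i$ couple consecutive sets $U_i,U_{i+1}$, and $\Fix T_i$ is an affine-normal-cone object of the form $(U_i\cap U_{i+1})+Y_i^\perp$ (cf.\ Proposition~\ref{p:0208a}), so one must argue that the relative-interior transversality of the $U_i$ transfers to a usable qualification condition on the $Z_i$. Once this translation is made, the conclusion is immediate: Corollary~\ref{c:cyclic} applied to the sequence $x_{n+1}=T_m\cdots T_1 x_n = T^n x_0$ yields linear convergence to a point of $Z$, which is exactly the assertion that the Borwein--Tam iteration converges at a linear rate. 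I would close by noting that the weak limit identified in the Borwein--Tam Fact is then automatically the strong (indeed linear) limit here, since finite dimensionality makes weak and strong convergence coincide.
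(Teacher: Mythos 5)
Your proposal is correct and is essentially the paper's own proof, which simply combines Theorem~\ref{t:Hung} (each $T_i$ is boundedly linearly regular, since $\bigcap_{j\in I}\reli U_j\neq\varnothing$ implies $\reli U_i\cap\reli U_{i+1}\neq\varnothing$ for every consecutive pair) with Corollary~\ref{c:cyclic}. The one step you flag as an obstacle closes exactly as you suggest: Proposition~\ref{p:0208a}\ref{p:0208a3} gives $Z_i=(U_i\cap U_{i+1})+Y_i^{\perp}$, hence $\reli Z_i\supseteq\reli U_i\cap\reli U_{i+1}\supseteq\bigcap_{j\in I}\reli U_j\neq\varnothing$, so Corollary~\ref{c:regu}\ref{c:regu2} yields innate (in particular bounded) linear regularity of $(Z_i)_{i\in I}$ in finite dimensions.
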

\begin{proof}
Combine Theorem~\ref{t:Hung} with Corollary~\ref{c:cyclic}. 
\end{proof}

\begin{corollary}[subspaces]
\label{c:BTsub}
Suppose that each $U_i$ is a subspace\footnote{A simple
translation argument yields a version for affine subspaces with
a nonempty intersection.} with $U_i + U_{i+1}$ is closed, 
and that $(Z_i)_{i\in I}$ is boundedly
linearly regular. 
Then the convergence of the Borwein--Tam method is with a linear rate.
\end{corollary}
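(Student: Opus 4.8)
The plan is to verify the hypotheses of Corollary~\ref{c:cyclic} for the family of Douglas--Rachford operators $(T_i)_{i\in I}$ with $T_i = P_{U_{i+1}}R_{U_i}+\Id-P_{U_i}$, and then invoke that corollary to conclude linear convergence. Since $I=\{1,\ldots,m\}$ and the Borwein--Tam operator is exactly the cyclic composition $T=T_m\cdots T_1$, the iteration $x_{n+1}=T^nx_0$ falls precisely under the cyclic framework, so the task reduces to checking: (a) each $T_i$ is averaged nonexpansive; (b) each $T_i$ is boundedly linearly regular; and (c) the family $(Z_i)_{i\in I}$ of fixed point sets is boundedly linearly regular. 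Hypothesis (c) is assumed outright in the statement, so I would only need to address (a) and (b).

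For (a), the Douglas--Rachford operator for two sets is firmly nonexpansive, hence averaged; this is standard and I would cite the relevant fact (e.g., that $T_i=\thalb(\Id+R_{U_{i+1}}R_{U_i})$ when both are reflections, or more directly that $P_{U_{i+1}}R_{U_i}+\Id-P_{U_i}$ is firmly nonexpansive for closed convex $U_i,U_{i+1}$). For (b), since each $U_i$ is a subspace, I would exploit the explicit structure available in the subspace case. The key point is that for subspaces $U_i$ and $U_{i+1}$ with $U_i+U_{i+1}$ closed, the Douglas--Rachford operator $T_i$ is linear, nonexpansive, and has $\ran(\Id-T_i)$ closed, so Theorem~\ref{t:chess} applies directly to give linear regularity (hence bounded linear regularity). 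This mirrors exactly the argument of Example~\ref{ex:DR1}: there $T=P_VP_U+P_{V^\perp}P_{U^\perp}$ was shown linearly regular, and for subspaces the reflection-based Douglas--Rachford operator coincides with (or is unitarily equivalent to) this form, so the closedness of $\ran(\Id-T_i)$ follows from the closedness of $U_i+U_{i+1}$ via the cited results \cite{BMW2} and \cite[Corollary~15.35]{BC2011}.

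The main obstacle I anticipate is carefully confirming that the subspace Douglas--Rachford operator $T_i=P_{U_{i+1}}R_{U_i}+\Id-P_{U_i}$ indeed satisfies the hypothesis ``$\ran(\Id-T_i)$ closed'' of Theorem~\ref{t:chess}, given only the assumption that $U_i+U_{i+1}$ is closed. For two subspaces $U$ and $V$ one has the identity $T = P_VR_U+\Id-P_U = P_VP_U+P_{V^\perp}P_{U^\perp}$ after simplification, which is precisely the operator of Example~\ref{ex:DR1}; I would verify this algebraic identity (using $R_U=2P_U-\Id$ and $\Id-P_U=P_{U^\perp}$) so as to transfer the closedness and linear-regularity conclusions of that example verbatim. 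Once each $T_i$ is known to be linearly regular (a fortiori boundedly linearly regular) and averaged, and $(Z_i)_{i\in I}$ is boundedly linearly regular by hypothesis, Corollary~\ref{c:cyclic} yields that $(x_n)_\nnn=(T^nx_0)_\nnn$ converges linearly to a point in $Z$, completing the proof.
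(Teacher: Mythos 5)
Your proposal is correct and follows essentially the same route as the paper, whose proof is simply ``Combine Example~\ref{ex:DR1} with Corollary~\ref{c:cyclic}''; the algebraic identity $P_{U_{i+1}}R_{U_i}+\Id-P_{U_i}=P_{U_{i+1}}P_{U_i}+P_{U_{i+1}^\perp}P_{U_i^\perp}$ for subspaces, which you propose to verify, is exactly the (implicit) bridge the paper relies on, and it does hold. Your additional checks (firm nonexpansiveness giving averagedness, and closedness of $\ran(\Id-T_i)$ via Theorem~\ref{t:chess}) are precisely the content packaged inside Example~\ref{ex:DR1} and the standing assumptions, so nothing is missing.
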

\begin{proof}
Combine Example~\ref{ex:DR1} with Corollary~\ref{c:cyclic}. 
\end{proof}

Of course, using Theorem~\ref{t:main1}, we can formulate various
variants for a general quasicyclic variant. 
We conclude this section with a random version.

\begin{example}[subspaces --- random version]
Suppose the hypothesis of Corollary~\ref{c:BTsub} holds.
Assume in addition that $(Z_i)_{i\in I}$ 
is innately boundedly regular. Let $r$ be a random map for $I$,
let $x_0\in X$, and set $(\forall\nnn)$ $x_{n+1}=T_{r(n)}x_n$.
Then $(x_n)_\nnn$ converges strongly to $P_Zx_0$.
\end{example}
\begin{proof}
Combine Example~\ref{ex:DR1} with Theorem~\ref{t:main2}. 
\end{proof}

\subsection{The Cyclically Anchored Douglas--Rachford Algorithm
(CADRA)}

In this section, we assume that $I=\{1,\ldots,m\}$,
that $A$ is a closed convex subset of $X$, also referred to as the
\emph{anchor}, 
and that $(B_i)_{i\in I}$ is a family of closed convex subsets of $X$
such that 
\begin{equation}
C = A \cap \bigcap_{i\in I} B_i\neq \varnothing.
\end{equation}
We set
\begin{equation}
(\forall i\in I)\quad T_i = P_{B_i}R_{A} + \Id-P_{A}, 
\;\; Z_i = \Fix T_i;\;\
Z = \bigcap_{i\in I} Z_i. 
\end{equation}
The \emph{Cyclically Anchored Douglas--Rachford Algorithm
(CADRA)} with starting point $x_0\in X$ 
generates a sequence $(x_n)_\nnn$ by
iterating
\begin{equation}
(\forall\nnn)\quad
x_{n+1} = Tx_n,\;\;
\text{where}\;\;
T = T_m\cdots T_2T_1.
\end{equation}

Note that when $m=1$, then CADRA coincides with the classical
Douglas--Rachford algorithm\footnote{This is not the case for the
BTM considered in the previous subsection.}. 

Let us record a central convergence result concerning the CADRA.

\begin{theorem}[CADRA]
\label{t:cadra}
The sequence $(x_n)_\nnn$ generated by CADRA converges weakly to a point
$\bar{x}\in Z$ such that $P_A\bar{x}\in C$. 
Furthermore, the convergence is linear provided that 
one of the following holds:
\begin{enumerate}
\item
\label{t:cadra1}
$X$ is finite-dimensional and that $\reli(A)\cap \bigcap_{i\in I}
\reli(B_i)\neq\varnothing$. 
\item
\label{t:cadra2}
$A$ and each $B_i$ is a subspace with $A+B_i$ closed
and that $(Z_i)_{i\in I}$ is boundedly linearly regular.
\end{enumerate}
\end{theorem}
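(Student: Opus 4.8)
The plan is to treat the weak-convergence claim and the two linear-convergence criteria separately, since they draw on different machinery developed earlier. For the weak convergence statement, the key observation is that each $T_i = P_{B_i}R_A + \Id - P_A$ is exactly a Douglas--Rachford operator for the pair $(A, B_i)$, hence firmly nonexpansive (thus averaged), and by Proposition~\ref{p:0208a}\ref{p:0208a1} its fixed point set is $Z_i = (A\cap B_i) + N_{A-B_i}(0)$. The composition $T = T_m\cdots T_1$ is therefore averaged nonexpansive, so iterating it produces a Fej\'er monotone sequence with respect to $Z = \bigcap_{i\in I} Z_i$, and weak convergence to some $\bar x \in Z$ follows from the standard theory of averaged operators (the weak convergence half of the Borwein--Tam/Krasnosel'ski\u\i--Mann framework). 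The supplementary claim that $P_A\bar x \in C$ should come from the structure of the individual fixed point sets: a point $\bar x\in Z_i$ satisfies $P_A\bar x \in A\cap B_i$, and since $\bar x$ lies in \emph{every} $Z_i$, its single projection $P_A\bar x$ lands in $A\cap\bigcap_i B_i = C$.

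For part~\ref{t:cadra1}, the strategy is to invoke Theorem~\ref{t:Hung} componentwise. Under the transversality hypothesis $\reli(A)\cap\bigcap_{i\in I}\reli(B_i)\neq\varnothing$, each pair $(A,B_i)$ satisfies $\reli A\cap\reli B_i\neq\varnothing$, so Theorem~\ref{t:Hung} guarantees that each $T_i$ is boundedly linearly regular. Combined with the fact that each $T_i$ is averaged, the only remaining ingredient for Corollary~\ref{c:cyclic} is bounded linear regularity of the family $(Z_i)_{i\in I}$. Since $X$ is finite-dimensional and each $Z_i$ has the form $(A\cap B_i) + N_{A-B_i}(0)$, I would establish this via Fact~\ref{f:regu} --- most cleanly by exhibiting a point in $\bigcap_i \reli Z_i$ or by reducing to the relative-interior criterion of Fact~\ref{f:regu}\ref{f:regu6}. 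With all hypotheses of Corollary~\ref{c:cyclic} in hand, linear convergence of $(T^n x_0)_\nnn$ follows directly.

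For part~\ref{t:cadra2}, the approach is to replace Theorem~\ref{t:Hung} with Example~\ref{ex:DR1}. When $A$ and each $B_i$ are subspaces with $A+B_i$ closed, each $T_i = P_{B_i}R_A + \Id - P_A$ is precisely the Douglas--Rachford operator for two subspaces treated in Example~\ref{ex:DR1}, hence linearly regular and in particular boundedly linearly regular. The bounded linear regularity of $(Z_i)_{i\in I}$ is now assumed outright in the hypothesis, so Corollary~\ref{c:cyclic} applies verbatim and yields linear convergence.

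The main obstacle I anticipate is verifying bounded linear regularity of the fixed-point family $(Z_i)_{i\in I}$ in part~\ref{t:cadra1}. While finite-dimensionality gives bounded regularity for free via Fact~\ref{f:regu}\ref{f:regu7}, the \emph{linear} rate demanded here requires bounded \emph{linear} regularity, which is strictly stronger and not automatic. The cleanest route is to show that the transversality of the $B_i$ relative to $A$ propagates to a relative-interior intersection condition on the $Z_i$, allowing an appeal to Fact~\ref{f:regu}\ref{f:regu6}; the technical care lies in tracking how the normal-cone summands $N_{A-B_i}(0)$ interact across different indices, and I would expect this to be the most delicate bookkeeping in the argument. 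Everything else --- the identification of each $T_i$ as a Douglas--Rachford operator, the averagedness, and the assembly via Corollary~\ref{c:cyclic} --- should be routine given the results already established.
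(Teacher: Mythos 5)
Your proposal takes essentially the same route as the paper, whose proof is exactly the assembly you describe: weak convergence is quoted from \cite[Theorem~5.22]{BB}, part~\ref{t:cadra1} combines Theorem~\ref{t:Hung} with Corollary~\ref{c:cyclic}, and part~\ref{t:cadra2} combines Example~\ref{ex:DR1} with Corollary~\ref{c:cyclic}. The step you flag as the main obstacle --- bounded linear regularity of $(Z_i)_{i\in I}$ under hypothesis~\ref{t:cadra1} --- is left implicit in the paper as well, and your sketched route does close it: by Proposition~\ref{p:0208a}\ref{p:0208a3}, $Z_i=(A\cap B_i)+Y_i^{\perp}$ with $Y_i=\lspan(B_i-A)$, so $\varnothing\neq\reli A\cap\bigcap_{i\in I}\reli B_i\subseteq\bigcap_{i\in I}\reli Z_i$, and Fact~\ref{f:regu}\ref{f:regu6} (or Corollary~\ref{c:regu}\ref{c:regu2}) then gives the required bounded linear regularity.
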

\begin{proof}
The weak convergence follows from e.g.\
\cite[Theorem~5.22]{BB}. 
\ref{t:cadra1}: 
Now combine Theorem~\ref{t:Hung} with Corollary~\ref{c:cyclic}. 
\ref{t:cadra2}: 
Combine Example~\ref{ex:DR1} with Corollary~\ref{c:cyclic}. 
\end{proof}

One may also obtain a random version of CADRA by using
Theorem~\ref{t:main2}.

\subsection{Numerical experiments}

We now work in $X=\RR^{100}$. We set
$A = \RR^{50}_+\times\{0\}\subset X$, 
and we let each $B_i$ be a hyperplane with
normal vector in $\RR^{100}_{++}$, where $1\leq i\leq m$ and
$1\leq m\leq 50$. 
Using the programming language \texttt{julia} \cite{Julia}, 
we generated these data randomly, where for each
$m\in\{1,\ldots,50\}$, the problem
\begin{equation}
\text{find $x\in A \cap \bigcap_{i\in\{1,\ldots,m\}} B_i$}
\end{equation}
has a solution in $\reli A$.
We then choose 
10 random starting points in $\RR_+^{100}$, each with Euclidean
norm equal to $100$. Altogether, we obtain 50 problems and 500 instances
for each of the algorithms
Cyclic Projections (CycP), 
BTM, and CADRA applied to the sets $A,B_1,\ldots,B_m$. 
If $(x_n)_\nnn$ is the main sequence generated by one of these
algorithms and $(z_n)_\nnn = (P_{A}x_n)_\nnn$, then 
we terminate at stage $n$ when
\begin{equation}
\max\big\{d_{B_1}(z_n),\ldots,d_{B_m}(z_n)\big\} \leq 10^{-3}.
\end{equation}
We divide the 50 problems into 5 groups, depending on the value
of $m$. In Table~\ref{t:1}, 
we record the median of the number of iterations
required for each algorithm to terminate, and we also list the percentage 
that each algorithm is the fastest among the three.
\begin{table}[!h]
\centering
\begin{tabular}{|r|r|r|r|r|r|r|}
\hline
\multirow{2}{*}{Range of $m$}
& \multicolumn{2}{|c|}{CycP}
& \multicolumn{2}{|c|}{BTM}
& \multicolumn{2}{|c|}{CADRA}\\
\cline{2-7}
& Iterations & Wins  & Iterations & Wins  & Iterations & Wins  \\
\hline
1--10 & 79.5  & 10 & 78.5   & 52 &      80.0 & 43\\
\hline
11--20 & 391.0   &  0	 &  384.0   &   0 & 179.5 & 100\\
\hline
21--30 & 932.0   &  0  & 942.5 &  2  &  370.5 &  98\\
\hline
31--40 & 1,645.0 &  8  & 1,690.5 &  6  &  959.5 &  86\\
\hline
41--50 & 4,749.0 &  35  & 4,482.5 &  35  & 5,151.5 &  30\\
\hline
\end{tabular}
\caption{Median of number of iterations and number of wins}
\label{t:1}
\end{table}
Finally, we observe that CADRA performs quite well compared to CycP and
BTM, especially when the range of parameters keep the problems
moderately underdetermined.

\section*{Acknowledgments}
HHB was partially supported by the Natural Sciences and
Engineering Research Council of 
Canada and by the Canada Research Chair
Program.
DN acknowledges hospitality of the University of British Columbia
in Kelowna and support by the Pacific Institute of the Mathematical Sciences during the preparation of this paper.
HMP was partially supported by an NSERC accelerator grant of HHB.


\end{document}